\def\fspace{E} 
\def\B{{\cal B}} 
\numberwithin{equation}{section}
\newtheorem{theorem}{Theorem}[section]
\newtheorem{lemma}[theorem]{Lemma}
\newtheorem{corollary}[theorem]{Corollary}
\newtheorem{proposition}[theorem]{Proposition}
\theoremstyle{definition}
\newtheorem{definition}[theorem]{Definition}
\newtheorem{example}[theorem]{Example}
\newtheorem{remark}[theorem]{Remark}
\newcommand{\R}{\mathbb{R}}
\title{\Large\textbf{Symmetry and monotonicity results for solutions of semilinear PDEs in sector-like domains}}
\author{Francesca Gladiali and Antonio Greco
}
\date{}
\begin{document}

\definecolor{antonio}{rgb}{.0,.5,.0}
\belowdisplayshortskip = \belowdisplayskip

\paperwidth=210 true mm
\paperheight=297 true mm
\pdfpagewidth=210 true mm
\pdfpageheight=297 true mm

\maketitle

\pdfbookmark[2]{Abstract}{Abstract}
\begin{abstract}
In this paper we consider semilinear PDEs, with a convex nonlinearity,
in a sector-like domain. Using cylindrical coordinates $(r, \theta, z)$,  we
investigate the shape of solutions whose derivative in $\theta$ vanishes at
the boundary. We prove that any solution with Morse index less than
 two must be either independent of~$\theta$ or strictly monotone with respect to~$\theta$. In the special case of a planar domain, the result  holds in a circular sector as well as in an annular, and it can also be extended to a rectangular domain. The corresponding problem in higher dimensions is also considered, as well as an extension to unbounded domains. The proof is based on a rotating-plane argument: a convenient manifold is introduced in order to avoid overlapping the domain with its reflected image in the case when its opening is larger than~$\pi$.
\end{abstract}

\section{Introduction}

In this paper we consider solutions of semilinear elliptic PDEs 
in a sector-like domain $\Omega_{0\beta} \subset \mathbb R^N$, $N \ge 2$. To describe the results, for every $x = (x_1, \ldots, x_N)\in \mathbb R^N$ we define $d(x) = \sqrt{x_1^2 + x_2^2 \,}$ and $z = (x_3, \ldots, x_N) \in \mathbb R^{N - 2}$, and we denote by $\Upsilon$ the $(N - 2)$-dimensional subspace $\Upsilon :=\{x\in \R^N: d(x)=0\}$.
Observe that each point $x\in \R^N\setminus\Upsilon$ can be represented in cylindrical coordinates as $i(x) = (r,\theta,z)$, where $r = d(x)$, and $\theta = \theta(x) \in [0,2\pi)$ is uniquely determined by
$$
\begin{cases}
x_1 = r \, \cos \theta,
\\
x_2 = r \, \sin \theta.
\end{cases}
$$
If $N = 2$, then $(r,\theta)$ are the polar coordinates of~$x$, the set $\Upsilon$ contains just the origin, and the symbol~$z$ should be ignored. In general, a rotation of~$\mathbb R^N$ about the origin is a linear mapping $x \mapsto Ax$ associated to an orthogonal matrix~$A$. In the present paper we deal with \textit{cylindrical} rotations, i.e., the particular rotations whose matrix $A = A^N_\theta$ is given by
$$
A^2_\theta
=
\begin{pmatrix}
\cos\theta & \sin\theta
\\
-\sin\theta & \cos\theta
\end{pmatrix}
\quad
\mbox{and}
\quad
A^N_\theta
=
\begin{pmatrix}
A^2_\theta & 0\\
0 & I^{N - 2}\\
\end{pmatrix}
\!,\
N \ge 3,
$$
where $I^{N -2}$ is the $(N - 2)$-dimensional identity matrix. In the first part of the paper we deal with bounded domains. More precisely, let $\Omega$ be a bounded, Lipschitz domain which is invariant under cylindrical rotations in the sense that $x \in \Omega$ if and only if $A_\theta \, x \in \Omega$ for every $\theta \in [0,2\pi)$. For $\theta_1,\theta_2\in [0,2\pi)$ satisfying $\theta_1<\theta_2$ we introduce the bounded open sector 
\[\Omega_{\theta_1\theta_2}:=\{\, x \in \Omega\setminus \Upsilon : \theta(x) \in (\theta_1,\theta_2) \,\}\]
whose boundary is made of the open, flat surfaces $\Gamma_{\!\theta_i}:=\{\, x \in \Omega\setminus \Upsilon : \theta(x) = \theta_i \,\}$, $i=1,2$, the torical surface $\gamma_{\theta_1\theta_2}:= \{\, x \in \partial\Omega\setminus \Upsilon : \theta(x) \in [\theta_1,\theta_2] \,\}$ and the (possibly empty) set $\gamma:=\overline \Omega\cap \Upsilon$. The typical examples of the domain $\Omega_{\theta_1\theta_2}$ are: a sector of a sphere, a sector of a cylinder, a sector of an annulus, a sector of a torus, a sector of a cone. But also more complicated domains as cylinders (or spheres) with cavities can be considered for~$\Omega$, and even tori with a torical, coaxial tunnel inside. Observe that $\gamma$ may be empty (which is the case, for instance, when $\Omega$ is a torus), and can be disconnected as in the case when $\Omega$ is a cylinder in~$\mathbb R^3$ with a cavity.
Now fix $\beta\in (0,2\pi)$ and consider the boundary-value problem
\begin{equation}\label{P}
\left\{\begin{array}{ll}
-\Delta u = f(d(x),z,u) \qquad & \text{ in } \Omega_{0\beta}, \\
\noalign{\medskip}
\frac{\partial u}{\partial \theta}=0 & \text{ on $\gamma_{0\beta}\cup \Gamma_{\! 0} \cup \Gamma_{\! \beta}$,}
\end{array} \right.
\end{equation} 
where $\Delta$ is the Laplace operator, and $f(r,z,u)$ is locally H\"older continuous on $[0,+\infty) \times \mathbb R^{N - 1}$ {together with $f' = \partial f / \partial u$}. 
Observe that $u_\theta = \frac {\partial u}{\partial \theta}$ is the normal derivative of $u$ on $\Gamma_{\! 0} \cup \Gamma_{\! \beta}$ while it is the tangential derivative on $\gamma_{0\beta}$.
Let $\cal X$ be the set of all functions $u \in C^2(\Omega_{0\beta}) \cap C^0(\overline \Omega_{0\beta})\cap H^1(\Omega_{0\beta})$ such that $u_\theta \in H^1(\Omega_{0\beta})$. We investigate functions $u \in \cal X$ satisfying~\eqref{P} pointwise. Using \cite[Prop. 6.1]{PacellaTralli}, it can be proved that the solution $u$ is smooth, namely $C^2$, in $\Omega_{0\beta}\cup \Gamma_{\! 0} \cup \Gamma_{\! \beta}$.
The assumption that $u_\theta\in H^1(\Omega_{0\beta})$ is introduced to avoid technicalities and it is satisfied, for instance, if $u \in W^{2,2}(\Omega_{0\beta})$.
Such an assumption is dropped in the last part of the paper, where we use a truncation technique to deal with unbounded domains (see, in particular, \eqref{gradient}-\eqref{zeta}). The boundary condition $u_\theta = 0$ on $\gamma_{0\beta}$ can be interpreted in the sense of traces. However, since $u$ is continuous up to there, the condition implies that the values of~$u$ on~$\gamma_{0\beta}$ are constant in~$\theta$. Thus, \eqref{P} is a shorthand for the following mixed problem of Dirichlet-Neumann type:
\begin{equation}\label{instance}
\left\{\begin{array}{ll}
-\Delta u = f(d(x),z,u) \qquad & \text{ in } \Omega_{0\beta}, \\
\noalign{\medskip}
u=g(d(x),z) & \text{ on } \gamma_{0\beta},\\
\noalign{\medskip}
\frac{\partial u}{\partial \nu}=0 & \text{ on } \Gamma_{\! 0} \cup \Gamma_{\! \beta}
\end{array} \right.
\end{equation}
where $g(r,z)$ is a continuous function, and $\nu$ denotes the outward unit normal to $\partial \Omega_{0\beta}$. If $u$ is a solution to \eqref{P}, we denote by~$I$ the identity operator and by
\[L_u:=-\Delta-f'(d(x),z,u(x)) \, I\]
the linearized operator at~$u$. Furthermore,
\begin{equation}\label{Q}
Q_D(v):=\int_D \Big( |\nabla v(x)|^2-f'(d(x),z,u(x)) \, v^2(x) \Big) \, dx,
\quad
v \in H^1(D)
\end{equation}
is the quadratic form associated to $L_u$ in an open subset $D \subset \Omega_{0\beta}$. In fact, the operator~$L_u$ and the quadratic form~$Q_D$ are well defined for every $u \in L^\infty(\Omega_{0\beta})$. If $\Gamma$ is a sufficiently smooth (possibly disconnected) part of $\partial D$, we denote by
\[H^1_{\Gamma}(D):=\{v\in H^1(D): v=0 \text{ on }\Gamma\}\]
the Sobolev space of square-summable functions having square-summable weak derivatives and vanishing trace along~$\Gamma$, and, for $i\geq 1$, we denote by $\lambda_i(H^1_\Gamma(D))$ the eigenvalues of the operator $L_u$ in $H^1_\Gamma(D)$. We will assume that the solution $u$ to \eqref{P} satisfies $\lambda_2(H^1_{\gamma_{0\beta}}(\Omega_{0\beta}))\geq 0$.
Our main result in bounded domains is the following:

\begin{theorem}\label{teo-1}
Let $u \in \cal X$ be a solution to \eqref{P}, where $f(r,z,u)$ is convex with respect to $u$.
\begin{enumerate}
\item If\/ $\lambda_1(H^1_{\gamma_{0\beta}}(\Omega_{0\beta}))\geq 0$ then $u$ is constant with respect to the angular variable\/ $\theta$.
\item If\/ $\lambda_1(H^1_{\gamma_{0\beta}}(\Omega_{0\beta})) < 0 \le \lambda_2(H^1_{\gamma_{0\beta}}(\Omega_{0\beta}))$ then $u$ is either independent of\/~$\theta$ or strictly monotone with respect to~$\theta$ in\/ $\Omega_{0\beta}$. In the last case, $u$ has a nonvanishing derivative~$u_\theta$, the first Dirichlet eigenvalue $\lambda_1(H^1_0(\Omega_{0\beta}))$ equals zero and $u_\theta$ is a corresponding eigenfunction.
\end{enumerate}
\end{theorem}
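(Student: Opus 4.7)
The starting observation is that the angular derivative $w := u_\theta$ satisfies the linearized equation $L_u w = 0$ in $\Omega_{0\beta}$: the generator $\partial_\theta = -x_2\partial_{x_1}+x_1\partial_{x_2}$ commutes with $\Delta$, and $d(x)$, $z$ are rotation-invariant, so differentiating $-\Delta u = f(d,z,u)$ with respect to $\theta$ gives $-\Delta w = f'(d,z,u)\,w$. Since $u_\theta = 0$ on the whole of $\gamma_{0\beta}\cup\Gamma_{\! 0}\cup\Gamma_{\! \beta}$, the function $w$ lies in $H^1_0(\Omega_{0\beta})\subset H^1_{\gamma_{0\beta}}(\Omega_{0\beta})$, and multiplying $L_u w = 0$ by $w$ and integrating by parts (with no boundary contribution) yields the key identity $Q_{\Omega_{0\beta}}(w) = 0$.

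For part~(1), set $\lambda_1:=\lambda_1(H^1_{\gamma_{0\beta}}(\Omega_{0\beta}))\ge 0$. If $\lambda_1 > 0$, the variational bound $Q(w)\ge\lambda_1\|w\|_{L^2}^2$ together with $Q(w) = 0$ forces $w\equiv 0$. If $\lambda_1 = 0$, then $w$ is a scalar multiple of the simple, strictly positive first mixed eigenfunction $\phi_1$; Hopf's lemma applied at the Neumann part $\Gamma_{\! 0}\cup\Gamma_{\! \beta}$ gives $\phi_1 > 0$ there, whereas $w$ vanishes there, so the scalar must be zero. In either case $u$ is independent of $\theta$.

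For part~(2), assume $\lambda_1 < 0 \le \lambda_2$. The plan is to rule out sign changes of $w$ and then apply the strong maximum principle. If $w$ did change sign, then $w^\pm := \max(\pm w, 0)$ would be non-trivial elements of $H^1_0(\Omega_{0\beta})$, and the same integration-by-parts argument on the supports gives $Q(w^+) = Q(w^-) = 0$; disjoint supports make the cross term vanish, so $Q$ is identically zero on the two-dimensional subspace $W := \mathrm{span}\{w^+, w^-\}$, and the min--max characterization forces $\lambda_2\le 0$, hence $\lambda_2 = 0$. Choose positive $\alpha,\beta$ with $\alpha\int w^+\phi_1\,dx = \beta\int w^-\phi_1\,dx$ (both integrals are strictly positive since $\phi_1>0$) and set $v := \alpha w^+ - \beta w^- \in W \cap \phi_1^\perp$; because $Q(v) = 0$ and $v\perp\phi_1$, $v$ is a second mixed eigenfunction. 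If $\alpha\ne\beta$, the function $\tilde v := v - \alpha w$ solves $L_u\tilde v = 0$ and vanishes on the open set $\{w > 0\}$, so Aronszajn's unique continuation yields $\tilde v\equiv 0$, contradicting $\tilde v = (\beta-\alpha)w\ne 0$ on $\{w < 0\}$. If instead $\alpha = \beta$, then $w$ itself lies in $\phi_1^\perp$, hence is a second mixed eigenfunction satisfying Neumann on $\Gamma_{\! 0}\cup\Gamma_{\! \beta}$ in addition to the Dirichlet vanishing already known there, and a boundary unique continuation argument (extending $w$ by zero across these flat pieces, which is legitimate because $u$ itself extends $C^2$ by even reflection) again forces $w\equiv 0$, a contradiction.

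With sign changes excluded, the strong maximum principle applied to $L_u w = 0$ with $w$ of constant sign gives either $w\equiv 0$ (so $u$ is independent of $\theta$) or $|w|>0$ throughout $\Omega_{0\beta}$; in the latter case $u_\theta$ is a strictly positive (or strictly negative) Dirichlet eigenfunction of $L_u$ with eigenvalue zero, whence $\lambda_1(H^1_0(\Omega_{0\beta})) = 0$ with $u_\theta$ as the corresponding eigenfunction. The main technical obstacle is the borderline case $\lambda_2 = 0$ in part~(2): the sign-change argument by itself yields only $\lambda_2\le 0$, so one is forced to construct an explicit second eigenfunction from $w^+$ and $w^-$ and split into the two sub-cases $\alpha\ne\beta$ and $\alpha=\beta$ in order to invoke interior, respectively boundary, unique continuation.
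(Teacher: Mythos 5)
Your argument is correct in substance, but it follows a genuinely different route from the paper's. The paper proves Theorem~\ref{teo-1} by the rotating-plane method: it studies the reflected differences $w_\alpha(x)=u(\sigma_\alpha(x))-u(x)$, for which the convexity of $f$ yields $L_u\,w_\alpha\ge 0$ where $w_\alpha\ge 0$, establishes splitting and sign-preservation lemmas on the half-sectors, and sweeps $\alpha$ from $\beta/2$ up to $\beta$ using Hopf's lemma and the maximum principle in small domains, reading off $u_\theta>0$ from $\partial w_\alpha/\partial\nu<0$ on $\Gamma_{\!\alpha}$. You instead work only with $w=u_\theta\in H^1_0(\Omega_{0\beta})$: from $Q(u_\theta^{\pm})=0$ a sign change forces $\lambda_2=0$, and you then manufacture a second mixed eigenfunction out of $u_\theta^{+},u_\theta^{-}$ and eliminate it by interior (your case of unequal coefficients) or boundary (equal coefficients) unique continuation. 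Every ingredient you invoke --- testing \eqref{weak_solution} with $u_\theta\in H^1_0(\Omega_{0\beta})$, the Poincar\'e min--max, the fact that a constrained minimizer of the Rayleigh quotient is an eigenfunction satisfying the natural Neumann condition on $\Gamma_{\!0}\cup\Gamma_{\!\beta}$, extension by zero across $\Gamma_{\!0}$ followed by unique continuation --- is also used by the paper itself in Lemma~\ref{lem-3} and in the Conclusion of Section~\ref{proof}, so your proof sits at the same level of rigor. Notably, your proof never uses the convexity of $f$: this is coherent, since the stated conclusion concerns only the sign of $u_\theta$, which solves the linearized equation exactly; your route is essentially that of \cite{Gladiali2} rather than of \cite{Pacella,PacellaWeth}, whereas the paper's heavier machinery buys a scheme that survives in unbounded domains (Sections~\ref{unbounded}--\ref{se:10}), where the spectrum is no longer discrete and your min--max/eigenfunction construction is unavailable. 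Two minor cautions: your coefficients $\alpha,\beta$ clash with the paper's reflection parameter and opening angle, and in Part~1 with $\lambda_1=0$ the Hopf argument on the Neumann part requires the regularity of the first eigenfunction up to $\Gamma_{\!0}\cup\Gamma_{\!\beta}$ (available in this setting; the extension-by-zero argument you use elsewhere would serve equally well there).
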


\noindent In Section~\ref{cylindrical} we state a corresponding result for the case when $\Omega_{0\beta}$ is a cylindrical domain, which is in some sense the asymptotic shape of a sector-like domain when the opening~$\beta$ is small. All the occurrences in Theorem \ref{teo-1} can happen and we will provide some examples in Section~\ref{exe}.  In particular we show that if the first Dirichlet eigenvalue $\lambda_1(H^1_0(\Omega_{0\beta})) = 0$ then $u_\theta$ may well vanish identically. 
We also show that if $\lambda_2(H^1_{\gamma_{0\beta}}(\Omega_{0\beta}))< 0$ it can happen that the solution $u$ is neither independent of~$\theta$ nor monotone with respect to $\theta$.
Starting from Section~\ref{unbounded} we extend the result to unbounded domains: a precise statement is given in Theorem~\ref{teo-2}.

\begin{remark}
In the statement of Theorem \ref{teo-1} it is enough to require that $f(r,z,s)$ is convex with respect to~$s$ as long as $s \in (m,M)$, where $m=\inf\limits_{\Omega_{0\beta}}u(x)$ and~$M=\sup\limits_{\Omega_{0\beta}}u(x)$. 
This assumption has been introduced in the paper \cite{Pacella} in order to compare 
the quadratic form $Q_D(v)$ 
with the quadratic form associated to the equation satisfied by the difference of $u$ and its reflection with respect to some $\Gamma_{\!\theta}$ (as we also do in Section \ref{se:2}).\\
In the case of solutions to \eqref{instance} the assumption $\lambda_2(H^1_{\gamma_{0\beta}}(\Omega_{0\beta}))\geq 0$ is equivalent to consider solutions of Morse index $1$ and it satisfied by any ground state solution and any solution of mountain pass type. It is needed in the proof to start the ``rotating plane'' method in place of the positiveness of the solution $u$ which is usually required in the moving plane procedure. Moreover a bound on the Morse index usually implies an $L^\infty$-bound of the solution as observed in \cite{BahriLions} in the case of a Dirichlet problem under some growth condition on the nonlinear term $f$.\\
Some comments on the assumption $u_\theta\in H^1(\Omega_{0\beta})$, which is needed in the proof of Lemma~\ref{lem-3} and Lemma \ref{lem-4}, and consequently enters in Theorem \ref{teo-1}. 
By standard regularity theory, any weak solution $u \in H^1(\Omega_{0\beta})$ of~\eqref{instance} belongs to the smoothness class $C^2(\Omega_{0\beta}\cup \Gamma_{\! 0} \cup \Gamma_{\! \beta}) \cap C^0(\overline \Omega_{0\beta})$.
Furthermore, at least in the case when $\gamma=\emptyset$, if $\Omega\in C^{2,\alpha}$  and $g(d(x),z)\in C^{2,\alpha}$ then $u\in C^{2,\alpha}(\overline \Omega_{0\beta})$ showing that $u_\theta\in H^1(\Omega_{0\beta})$. \\
In the particular case of dimension $N=2$, the domain $\Omega_{0\beta}$ is a sector of the annulus or a sector of the disc so that $\gamma_{0\beta}$ is always smooth. In the case of the annulus then the assumption $u_\theta\in H^1(\Omega_{0\beta})$ is satisfied if $g(r)$ belongs to $C^{2,\alpha}$. In the case of the disc, instead, we can use \cite{Mazia} and \cite{AdolfssonJerison}
to gain the $W^{2,2}$ regularity of the solution $u$ when the domain is convex.
\\
Some previous results on cylindrical symmetry of low Morse index solutions
for a mixed problem can be found in \cite{Damascelli-Pacella-paper} where,  differently from our case, the equation holds in the whole of $\Omega$ and a nonlinear Neumann condition
is imposed on a subset of $\partial \Omega$.
Finally we recall that in the case when $\Omega_{0\beta}$ is a sector of the circle and $\beta<\pi$, the radial symmetry of the positive solution to \eqref{instance} with $g(d(x),z)=0$ was proved in~\cite{Berestycki-Pacella} using an involved version of the moving plane method.

\end{remark}

\section{Preliminary results}\label{se:2}

The proof of Theorem~\ref{teo-1} is based on the rotating plane method (rotating line, in two dimensions). The method was previously used in \cite{Pacella,PacellaWeth} to prove Schwarz symmetry for solutions of low Morse index to the Dirichlet problem in radially symmetric bounded domains. To be more precise, we use the reflection~$\sigma_\alpha$ with respect to~$\Gamma_{\!\alpha}$, where $\alpha$ is an angular parameter in the given interval $(0,\beta)$. In the special case when $\alpha \in (0,\pi)$ we may easily define $\sigma_\alpha \colon \Omega_{0,2\alpha} \to \Omega_{0,2\alpha}$ by prescribing that $i(\sigma_\alpha(x))=(r, \, 2\alpha - \theta, \, z)$ when $i(x)=(r,\theta,z)$. However, in order to simplify and generalize the subsequent arguments, it is convenient to extend the definition of~$\sigma_\alpha$ to the general case when $\alpha \in \R$. To this aim we consider the Riemannian manifold ${\cal M} :=\{(r,t,z): r \in (0,+\infty), t \in \mathbb R, z \in \mathbb R^{N-2}\}$ endowed with the flat metric whose first fundamental form is $ds^2 = dr^2 + r^2 \, dt^2 + dz^2$. Every sector-like domain $\Omega_{\theta_1\theta_2} \subset \R^N$ is isometrically embedded into~$\cal M$ by the mapping $i \colon \Omega_{\theta_1\theta_2} \to \cal M$ $i(x)=(r,\theta,z)$.
We will identify  $\Omega_{\theta_1\theta_2}$ with its image $i(\Omega_{\theta_1\theta_2})$ and, more generally, we will use the notation~$\Omega_{\theta_1\theta_2}$ to denote the submanifold
$$
\Omega_{\theta_1\theta_2}
=
\{\, (r,t,z) \in {\cal M}
:
t \in (\theta_1,\theta_2), (r,0,z)\in \Omega
\,\}
$$
for every $\theta_1,\theta_2 \in \R$ with $\theta_1 < \theta_2$. Now, for any given $\alpha \in \R$, we define the function $\sigma_\alpha \colon {\cal M} \to {\cal M}$ by letting $\sigma_\alpha(r,t,z) = (r, \, 2\alpha - t, \, z)$. Thus, for instance, if we take $\alpha \in (\pi,2\pi)$ and apply~$\sigma_\alpha$ to some point $(r,\theta,z)$ with $\theta \in (0,\, 2\alpha-2\pi)$, we obtain $(r, \, 2\alpha - \theta, \, z)$ where $2\alpha - \theta > 2\pi$. The last point, as an element of~$\cal M$, is distinct from the point $(r, \, 2\alpha - \theta - 2\pi, \, z)$. We denote still by~$x$ an arbitrary point $(r,t,z) \in \cal M$, for shortness, and we define the function $d \colon {\cal M} \to (0,+\infty)$ by letting $d(x) = r$. Thus, any function $u$ defined in $\Omega_{0\beta} \cup \,\Gamma_{\!0} \cup \,\Gamma_{\!\beta}$ can be extended to the submanifold~$\Omega_{-\beta,2\beta}$ by letting $u(x) := u(\sigma_0(x))$ for $x \in \Omega_{-\beta,0}$ and $u(x) := u(\sigma_\beta(x))$ for $x \in \Omega_{\beta,2\beta}$. If $u$ is a solution to~\eqref{P} and we extend it as above, the extended function (still denoted by~$u$) satisfies $-\Delta u = f(d(x),z,u)$ not only in~$\Omega_{0\beta}$, but also in~$\Omega_{-\beta,0}$ and in~$\Omega_{\beta,2\beta}$. To be precise, here $\Delta$ denotes the Beltrami-Laplace operator on~$\cal M$, which reduces to the standard Laplacian because $\cal M$ is flat. Moreover, since $f$ is H\"older continuous, standard regularity results imply that $u$ is regular through $\Gamma_{\!0}$ and~$\Gamma_{\!\beta}$, and satisfies the equation in all of~$\Omega_{-\beta,2\beta}$. Hence for every $\alpha \in (0,\beta)$, the function $w_\alpha \colon \Omega_{0\beta} \to \R$ given by $w_\alpha (x):=u(\sigma_\alpha(x))-u(x)$ is well defined and satisfies
\begin{equation}\label{eq:diff}
-\Delta w_\alpha -c_\alpha(x) \, w_\alpha =0 \ \text{ in } \Omega_{0\beta},
\end{equation}
where
\[c_\alpha (x):=\int_0^1 f' \big(d(x), \, z, \, t \, u(\sigma_\alpha(x)) + (1-t) \, u(x) \big) \, dt,\]
together with
\begin{equation}\label{boundary_conditions}
w_\alpha=0 \text{ on } \gamma_{0\beta} \cup \,\Gamma_{\!\alpha}
.
\end{equation}
In the sequel, the monotonicity of a solution~$u$ of~\eqref{P} will be detected through the sign of~$w_\alpha$: this motivates our interest in the function spaces $H^1_{\gamma_{0\alpha} \cup \,\Gamma_{\!\alpha}}(\Omega_{0\alpha})$ and $H^1_{\gamma_{\alpha\beta} \cup \,\Gamma_{\!\alpha}}(\Omega_{\alpha\beta})$. We have:

\begin{lemma}[Splitting lemma]\label{lem-1}
Let $u \in L^\infty(\Omega_{0\beta})$. Then for every $\alpha \in (0,\beta)$ we have
$$
\lambda_2(H^1_{\gamma_{0\beta}}(\Omega_{0\beta}))
\le
\max\Big\{\,
\lambda_1(H^1_{\gamma_{0\alpha} \cup \,\Gamma_{\!\alpha}}(\Omega_{0\alpha})),\
\lambda_1(H^1_{\gamma_{\alpha\beta} \cup \,\Gamma_{\!\alpha}}(\Omega_{\alpha\beta}))
\,\Big\}
.
$$
\end{lemma}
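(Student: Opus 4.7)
The plan is to establish the inequality by applying the variational (min-max) characterization of $\lambda_2$ to a carefully chosen two-dimensional test subspace, built from the first eigenfunctions on each of the two sub-sectors.

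More precisely, let $\mu_1 := \lambda_1(H^1_{\gamma_{0\alpha} \cup \,\Gamma_{\!\alpha}}(\Omega_{0\alpha}))$ and $\mu_2 := \lambda_1(H^1_{\gamma_{\alpha\beta} \cup \,\Gamma_{\!\alpha}}(\Omega_{\alpha\beta}))$, and let $\varphi_1, \varphi_2$ be corresponding first eigenfunctions, normalised so that $\|\varphi_i\|_{L^2} = 1$. Since each $\varphi_i$ has vanishing trace on $\Gamma_{\!\alpha}$, extending $\varphi_i$ by zero to the complementary sub-sector yields functions $\tilde\varphi_1, \tilde\varphi_2 \in H^1(\Omega_{0\beta})$ whose traces vanish on all of $\gamma_{0\beta}$; thus $\tilde\varphi_1, \tilde\varphi_2 \in H^1_{\gamma_{0\beta}}(\Omega_{0\beta})$. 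Because their supports are essentially disjoint (meeting only in the null set $\Gamma_{\!\alpha}$), they are linearly independent and $L^2$-orthonormal.

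Next, I would compute $Q_{\Omega_{0\beta}}$ on the two-dimensional subspace $V := \mathrm{span}\{\tilde\varphi_1,\tilde\varphi_2\} \subset H^1_{\gamma_{0\beta}}(\Omega_{0\beta})$. For $v = a\tilde\varphi_1 + b\tilde\varphi_2$, the disjoint-support property gives
\begin{equation*}
Q_{\Omega_{0\beta}}(v) = a^2\,Q_{\Omega_{0\alpha}}(\varphi_1) + b^2\,Q_{\Omega_{\alpha\beta}}(\varphi_2) = a^2 \mu_1 + b^2 \mu_2,
\qquad \|v\|_{L^2}^2 = a^2 + b^2,
\end{equation*}
using that each $\varphi_i$ is a normalised eigenfunction and that the quadratic form~\eqref{Q} is additive over disjoint domains. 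Hence
\begin{equation*}
\sup_{v \in V \setminus \{0\}} \frac{Q_{\Omega_{0\beta}}(v)}{\|v\|_{L^2}^2}
=
\max\{\mu_1,\mu_2\}.
\end{equation*}
Applying the Courant--Fischer min-max characterization of $\lambda_2(H^1_{\gamma_{0\beta}}(\Omega_{0\beta}))$ to this particular two-dimensional subspace $V$ immediately yields the claimed bound.

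The only delicate point is verifying that extension-by-zero across $\Gamma_{\!\alpha}$ indeed produces an admissible $H^1$ function with the correct boundary trace: this relies on the standard fact that if $\varphi \in H^1(D_1)$ with vanishing trace on a Lipschitz portion $\Gamma$ of $\partial D_1$ shared with a neighbouring domain $D_2$, then its zero extension lies in $H^1(D_1 \cup \Gamma \cup D_2)$. Since $\Omega$ is assumed Lipschitz and $\Gamma_{\!\alpha}$ is a flat piece (hence smooth) of the boundary of each sub-sector, this applies directly, and no other step requires more than routine eigenvalue computations.
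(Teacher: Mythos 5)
Your proposal is correct and follows essentially the same route as the paper: extend the normalised first eigenfunctions of the two sub-sectors by zero across $\Gamma_{\!\alpha}$, compute the Rayleigh quotient on the resulting two-dimensional subspace, and invoke the Poincar\'e minimax characterization of $\lambda_2$. Your extra remark justifying that extension-by-zero preserves membership in $H^1$ with the right vanishing trace is a welcome detail that the paper leaves implicit.
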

\begin{proof}
Let $\varphi_{0\alpha},\varphi_{\alpha\beta}$ be the eigenfunctions associated to the eigenvalues $\lambda_1(H^1_{\gamma_{0\alpha} \cup \,\Gamma_{\!\alpha}}(\Omega_{0\alpha}))$ and $\lambda_1(H^1_{\gamma_{\alpha\beta} \cup \,\Gamma_{\!\alpha}}(\Omega_{\alpha\beta}))$, respectively. Without loss of generality we assume $\|\varphi_{0\alpha}\|_{\Omega_{0\alpha}} = \|\varphi_{\alpha\beta}\|_{\Omega_{\alpha\beta}} = 1$, where $\| \cdot \|_D$ denotes the norm in $L^2(D)$. Since $\varphi_{0\alpha},\varphi_{\alpha\beta}$ satisfy a Dirichlet boundary condition on~$\Gamma_{\!\alpha}$, we may extend them to the whole~$\Omega_{0\beta}$ by letting $\varphi_{0\alpha} := 0$ in~$\Omega_{\alpha\beta}$ and $\varphi_{\alpha\beta} := 0$ in~$\Omega_{0\alpha}$. The extended functions are orthogonal in $H^1_{\gamma_{0\beta}}(\Omega_{0\beta})$ since their supports are disjoint, and we may define the two-dimensional linear subspace $W_0 = \{\, \phi \in  H^1_{\gamma_{0\beta}}(\Omega_{0\beta}) : \phi = a \, \varphi_{0\alpha} + b \, \varphi_{\alpha\beta} \mbox{ for } a,b \in \R \,\}$. Moreover, for every $\phi = a \, \varphi_{0\alpha} + b \, \varphi_{\alpha\beta} \not\equiv 0$ we have
\begin{align*}
\frac{\, Q_{\Omega_{0\beta}}(\phi) \,}
{\|\phi\|^2_{\Omega_{0\beta}}}
&=
\frac{\, 
a^2 \, Q_{\Omega_{0\alpha}}(\varphi_{0\alpha})
+
b^2 \, Q_{\Omega_{\alpha\beta}}(\varphi_{\alpha\beta})
\,}
{a^2 + b^2}
\\
\noalign{\smallskip}
&\le
\max\Big\{\,
Q_{\Omega_{0\alpha}}(\varphi_{0\alpha})
,\
Q_{\Omega_{\alpha\beta}}(\varphi_{\alpha\beta})
\,\Big\}
\\
\noalign{\bigskip}
&=
\max\Big\{\,
\lambda_1(H^1_{\gamma_{0\alpha} \cup \, \Gamma_{\!\alpha}}(\Omega_{0\alpha}))
,\
\lambda_1(H^1_{\gamma_{\alpha\beta} \cup \, \Gamma_{\!\alpha}}(\Omega_{\alpha\beta}))
\,\Big\}
.
\end{align*}
The variational formulation of the eigenvalues (see \cite[Theorem 1.42 (iii)]{Damascelli-Pacella}), sometimes called Poincar\'e's minimax characterization, ensures that 
\[
\lambda_2(H^1_{\gamma_{0\beta}}(\Omega_{0\beta})))
=\inf_{\stackrel{W\subset H^1_{\gamma_{0\beta}}(\Omega_{0\beta})}{{\rm dim}(W)=2}} \ \max_{\phi\in W \setminus \{0\}}\frac {\, Q_{\Omega_{0\beta}}(\phi) \,}{\|\phi\|^2_{\Omega_{0\beta}}},\]
and the conclusion follows immediately.
\end{proof}

\begin{corollary}\label{corollary}
Assume $\lambda_2(H^1_{\gamma_{0\beta}}(\Omega_{0\beta}))\geq 0$ for some $u \in L^\infty(\Omega_{0\beta})$, and let\/ $\alpha \in (0,\beta)$. Then either\/ $\lambda_1(H^1_{\gamma_{0\alpha} \cup \,\Gamma_{\!\alpha}}(\Omega_{0\alpha}))\geq 0$ or\/ $\lambda_1(H^1_{\gamma_{\alpha\beta} \cup \,\Gamma_{\!\alpha}}(\Omega_{\alpha\beta}))\geq 0$.
\end{corollary}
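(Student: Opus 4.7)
The corollary is an immediate consequence of the Splitting Lemma (Lemma~\ref{lem-1}), and I would prove it by contradiction. The plan is to suppose that both quantities
\[
\lambda_1(H^1_{\gamma_{0\alpha} \cup \,\Gamma_{\!\alpha}}(\Omega_{0\alpha}))
\quad\text{and}\quad
\lambda_1(H^1_{\gamma_{\alpha\beta} \cup \,\Gamma_{\!\alpha}}(\Omega_{\alpha\beta}))
\]
were strictly negative and then derive a contradiction with the hypothesis $\lambda_2(H^1_{\gamma_{0\beta}}(\Omega_{0\beta})) \ge 0$.

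The key step is to invoke the inequality provided by Lemma~\ref{lem-1}, namely
\[
\lambda_2(H^1_{\gamma_{0\beta}}(\Omega_{0\beta}))
\le
\max\Big\{\,
\lambda_1(H^1_{\gamma_{0\alpha} \cup \,\Gamma_{\!\alpha}}(\Omega_{0\alpha})),\
\lambda_1(H^1_{\gamma_{\alpha\beta} \cup \,\Gamma_{\!\alpha}}(\Omega_{\alpha\beta}))
\,\Big\}.
\]
Under the contradiction hypothesis, the right-hand side is the maximum of two negative numbers, hence strictly negative, which contradicts $\lambda_2(H^1_{\gamma_{0\beta}}(\Omega_{0\beta})) \ge 0$. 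Therefore at least one of the two first eigenvalues must be nonnegative, which is exactly the conclusion of the corollary.

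There is essentially no obstacle here: the only ingredient needed is Lemma~\ref{lem-1}, whose proof has just been given, and the elementary observation that the maximum of finitely many real numbers is nonnegative if and only if at least one of them is nonnegative. The proof will therefore be no more than a couple of lines long, simply recording the logical implication from Lemma~\ref{lem-1} to the alternative stated in the corollary.
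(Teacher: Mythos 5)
Your argument is correct and is exactly the reasoning the paper intends: the corollary is stated without proof precisely because it follows in one line from the inequality of Lemma~\ref{lem-1} together with the observation that a maximum of two numbers is nonnegative only if one of them is. Your contrapositive phrasing is equivalent and equally valid.
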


The positivity of the first eigenvalue is a necessary and sufficient condition in order that the weak maximum principle holds: see, for instance, \cite[Theorem~1.50]{Damascelli-Pacella}. When the first eigenvalue is allowed to vanish, we have a sign-preservation property for weak supersolutions of the Dirichlet problem associated to the operator $L_u$, as well as to the corresponding mixed problem. A weak supersolution of the Dirichlet problem for $L_u$ in~$\Omega_{0\alpha}$ is a function $v \in H^1(\Omega_{0\alpha})$ such that $v \ge 0$ almost everywhere on $\partial \Omega_{0\alpha}$ and the inequality
\begin{equation}\label{weak_Neumann}
\int_{\Omega_{0\alpha}}
\Big( \nabla v \, \nabla \varphi
-
f'(d(x), \, z, \, u) \, v \, \varphi \Big) dx
\ge 0
\end{equation}
holds for every nonnegative $\varphi \in H^1_0(\Omega_{0\alpha})$. Recall, further, that a weak supersolution of the mixed problem for $L_u$ in $\Omega_{0\alpha}$, with Neumann condition on~$\Gamma_{\! 0}$, is a function $v \in H^1(\Omega_{0\alpha})$ such that $v \ge 0$ a.e.\ on $\gamma_{0\alpha} \cup \Gamma_{\! \alpha}$ and the inequality~\eqref{weak_Neumann} holds for every nonnegative $\varphi \in H^1_{\gamma_{0\alpha} \cup \Gamma_{\! \alpha}}(\Omega_{0\alpha})$ \cite[p.~15]{Damascelli-Pacella}. A smooth function~$v$ satisfying $L_u \, v \ge 0$ pointwise in~$\Omega_{0\alpha}$ together with $v \ge 0$ on $\gamma_{0\alpha} \cup \Gamma_{\! \alpha}$ and $\partial v / \partial \nu \ge 0$ on~$\Gamma_{\! 0}$ is a also a weak supersolution of the mixed problem.

\begin{lemma}[Sign preservation]\label{sign}
Assume $\lambda_1(H^1_{\gamma_{0\alpha} \cup \,\Gamma_{\!\alpha}}(\Omega_{0\alpha})) \allowbreak \ge 0$ for some $u \in L^\infty(\Omega_{0\beta})$ and $\alpha \in (0,\beta)$. Let\/ $v \in C^1(\Omega_{0\alpha} \cup \Gamma_{\! 0}) \cap H^1(\Omega_{0\alpha})$ be a weak supersolution of the Dirichlet problem associated to the operator~$L_u$ in~$\Omega_{0\alpha}$, or a weak supersolution of the corresponding mixed problem with Neumann condition on~$\Gamma_0$.
\begin{enumerate}
\item If\/ $v$ satisfies $v = 0$ a.e.\ on\/ $\Gamma_{\!\alpha}$ without being identically zero in\/~$\Omega_{0\alpha}$, then either\/ $v > 0$ in\/~$\Omega_{0\alpha}$ or\/ $v < 0$ in\/~$\Omega_{0\alpha}$.
\item If\/ $v > 0$ a.e.\ on\/ $\Gamma_{\!\alpha}$, then $v > 0$ in\/~$\Omega_{0\alpha}$.
\end{enumerate}
\end{lemma}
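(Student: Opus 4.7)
The plan is to test the weak supersolution inequality \eqref{weak_Neumann} with the negative part $v^{-}:=\max\{-v,0\}$ and combine the outcome with the hypothesis $\lambda_1(H^1_{\gamma_{0\alpha} \cup \Gamma_{\!\alpha}}(\Omega_{0\alpha})) \ge 0$ to conclude that $v^{-}$ is either identically zero or the principal eigenfunction of $L_u$ at the eigenvalue $0$; the strong maximum principle then yields the desired sign of $v$.

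First I would verify that $\varphi=v^{-}$ is admissible in \eqref{weak_Neumann}. Under the hypotheses of either part of the lemma we have $v \ge 0$ a.e.\ on $\Gamma_{\!\alpha}$, which together with $v \ge 0$ a.e.\ on $\gamma_{0\alpha}$ (and, in the Dirichlet case, also on $\Gamma_{\!0}$) gives $v^{-} \in H^1_{\gamma_{0\alpha}\cup\Gamma_{\!\alpha}}(\Omega_{0\alpha})$, so $v^{-}$ is a legitimate nonnegative test function in both the Dirichlet and the mixed formulations. Using the Stampacchia chain-rule identities $\nabla v\cdot\nabla v^{-}=-|\nabla v^{-}|^2$ and $v\,v^{-}=-(v^{-})^2$ a.e., plugging $\varphi=v^{-}$ into \eqref{weak_Neumann} gives $Q_{\Omega_{0\alpha}}(v^{-})\le 0$. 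On the other hand, the variational characterization of $\lambda_1$ combined with $v^{-}\in H^1_{\gamma_{0\alpha}\cup\Gamma_{\!\alpha}}$ and the hypothesis $\lambda_1\ge 0$ yields $Q_{\Omega_{0\alpha}}(v^{-})\ge 0$. Thus $Q_{\Omega_{0\alpha}}(v^{-})=0$, so either $v^{-}\equiv 0$, or $v^{-}$ is a nontrivial nonnegative minimizer of the Rayleigh quotient; in the latter case simplicity of the first eigenvalue and the strong maximum principle applied to $L_u\,v^{-}=0$ force $\lambda_1=0$ and $v^{-}>0$ throughout $\Omega_{0\alpha}$.

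A dichotomy argument then finishes the proof. For Part~1, if $v^{-}\equiv 0$ then $v\ge 0$ and $L_u v\ge 0$ with $v\not\equiv 0$, so the classical strong maximum principle for uniformly elliptic operators with bounded zero-order coefficient gives $v>0$ in $\Omega_{0\alpha}$; if instead $v^{-}>0$ in $\Omega_{0\alpha}$, then automatically $v^{+}\equiv 0$ and $v=-v^{-}<0$ there. For Part~2, the eigenfunction alternative must be ruled out: it would force both $v^{+}$ and $v^{-}$ to have vanishing trace on $\Gamma_{\!\alpha}$ (the former because $v^{+}\equiv 0$ in $\Omega_{0\alpha}$, the latter because $v^{-}\in H^1_{\gamma_{0\alpha}\cup\Gamma_{\!\alpha}}$), hence the trace of $v=v^{+}-v^{-}$ on $\Gamma_{\!\alpha}$ would vanish, contradicting the hypothesis $v>0$ a.e.\ on $\Gamma_{\!\alpha}$; thus $v^{-}\equiv 0$ and the same strong-maximum-principle argument as before gives $v>0$ in $\Omega_{0\alpha}$.

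The main delicate point will be invoking the correct strong maximum principle for $L_u$, whose zero-order coefficient $f'(d(x),z,u(x))$ is only bounded and may change sign, in both the purely Dirichlet setting and the mixed setting with a Neumann condition on $\Gamma_{\!0}$; simplicity and strict positivity of the principal eigenfunction of the symmetric bilinear form associated with $Q_{\Omega_{0\alpha}}$ under the mixed Dirichlet/Neumann condition on $\gamma_{0\alpha}\cup\Gamma_{\!\alpha}$ are classical but should be stated explicitly when carrying out the proof.
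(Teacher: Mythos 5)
Your proposal is correct and follows essentially the same route as the paper: test the weak supersolution inequality with the negative part of $v$, deduce $Q_{\Omega_{0\alpha}}(v^-)\le 0$, combine with $\lambda_1(H^1_{\gamma_{0\alpha}\cup\Gamma_{\!\alpha}}(\Omega_{0\alpha}))\ge 0$ to conclude that $v^-$ is either identically zero or a signed first eigenfunction, and finish with the strong maximum principle (the paper cites \cite[Theorems 1.42(vi) and 1.28]{Damascelli-Pacella} for exactly these steps). The only differences are cosmetic: a sign convention for $v^-$ and a slightly more explicit trace argument in Part~2, which the paper leaves to the reader with ``the two claims follow at once.''
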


\begin{proof}
The negative part $v^-(x) = \min\{\, v(x),0 \,\} \le 0$ satisfies $v^- = 0$ a.e.\ on~$\gamma_{0\alpha} \cup \Gamma_{\! \alpha}$. Furthermore, if $v$ is a weak supersolution of the Dirichlet problem, then $v^- = 0$ on~$\Gamma_{\! 0}$ as well. In both cases, we may take $\varphi = - v^-$ in~\eqref{weak_Neumann} and get $Q_{\Omega_{0\alpha}}(v^-) \le 0$. Since
\begin{equation}\label{f3}0 \le \lambda_1(H^1_{\gamma_{0\alpha} \cup \,\Gamma_{\!\alpha}}(\Omega_{0\alpha}))=\inf\left\{\left.\frac {Q_{\Omega_{0\alpha}}(\varphi)}{\|\varphi\|^2_{\Omega_{0\alpha}}}\ \right|\ \varphi\in H^1_{\gamma_{0\alpha}\cup \,\Gamma_{\!\alpha}}(\Omega_{0\alpha}) \setminus \{0\} \right\}\end{equation}
we deduce that either $v^-$ vanishes identically in~$\Omega_{0\alpha}$, or we may let $\varphi = v^-$ to minimize the quotient in \eqref{f3}. In the last case $v^-$ is a first eigenfunction of~$L_u$ in~$H^1_{\gamma_{0\alpha} \cup \,\Gamma_{\!\alpha}}(\Omega_{0\alpha})$ and therefore we must have $v^-<0$ in all of~$\Omega_{0\alpha}$ (see \cite[Theorem~1.42 (vi)]{Damascelli-Pacella}). If, instead, $v^-$ vanishes identically, then $v \ge 0$ in~$\Omega_{0\alpha}$ and by the strong maximum principle (see~\cite[Theorem~1.28]{Damascelli-Pacella}) we have either $v \equiv 0$ or $v > 0$ in~$\Omega_{0\alpha}$. The two claims follow at once.
\end{proof}

The starting point of our rotating plane argument, developed in Section~\ref{proof}, is the case when $\alpha = \frac\beta2$. The function~$w_\frac\beta2$, which is odd with respect to~$\Gamma_{\!\frac\beta2}$, satisfies $w_\frac\beta2 = 0$ on~$\gamma_{0\beta} \cup \, \Gamma_{\!\frac\beta2}$. Therefore the following corollary applies:

\begin{corollary}\label{cor-3}
Assume $u \in \cal X$ is a solution to \eqref{P} such that\/ $\lambda_2(H^1_{\gamma_{0\beta}}(\Omega_{0\beta}))\geq 0$ and assume $f(r,z,u)$ is convex with respect to~$u$. If\/ $w_{\frac \beta 2}$ does not vanish identically in~$\Omega_{0\beta}$, then either\/ $w_{\frac \beta 2} > 0$ in\/~$\Omega_{0\frac\beta2}$ or\/ $w_{\frac \beta 2} > 0$ in\/ $\Omega_{\frac\beta2\beta}$.
\end{corollary}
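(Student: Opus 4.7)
The approach combines the splitting Corollary~\ref{corollary} with the sign-preservation Lemma~\ref{sign}, the bridge between them being a convexity-based reformulation of~\eqref{eq:diff} as a differential inequality for the linearization~$L_u$. Write $w = w_{\frac\beta 2}$; since $\sigma_{\frac\beta 2}$ fixes $\Gamma_{\!\frac\beta 2}$ pointwise and interchanges $\Omega_{0\frac\beta 2}$ with $\Omega_{\frac\beta 2\beta}$, the function $w$ is odd across $\Gamma_{\!\frac\beta 2}$, and the hypothesis $w \not\equiv 0$ in $\Omega_{0\beta}$ forces $w$ to be nontrivial in each of the two subsectors. The boundary conditions from~\eqref{boundary_conditions} give $w = 0$ on $\gamma_{0\beta} \cup \Gamma_{\!\frac\beta 2}$, and a direct calculation using $\partial(u \circ \sigma_{\frac\beta 2})/\partial\theta = -u_\theta(\sigma_{\frac\beta 2}(\cdot))$ together with the conditions $u_\theta = 0$ on $\Gamma_{\! 0} \cup \Gamma_{\!\beta}$ from~\eqref{P} shows that $\partial w/\partial\nu = 0$ on $\Gamma_{\! 0} \cup \Gamma_{\!\beta}$ as well.

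The key step, and the main obstacle, is to convert~\eqref{eq:diff} into $L_u w \ge 0$. Since $f(r,z,\cdot)$ is convex, $f'$ is nondecreasing in the last argument, so the integral average
\[
c_{\frac\beta 2}(x) = \int_0^1 f'\bigl(d(x), z, t\,u(\sigma_{\frac\beta 2}(x)) + (1-t)u(x)\bigr)\,dt
\]
satisfies $c_{\frac\beta 2}(x) \ge f'(d(x), z, u(x))$ wherever $w(x) > 0$ and the reverse inequality wherever $w(x) < 0$. A short case check (the third case $w(x)=0$ being trivial) then gives the pointwise inequality $\bigl(c_{\frac\beta 2}(x) - f'(d(x), z, u(x))\bigr)\,w(x) \ge 0$ throughout $\Omega_{0\beta}$. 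Rewriting~\eqref{eq:diff} as $L_u w = \bigl(c_{\frac\beta 2} - f'(d(x), z, u)\bigr)\,w$ thus yields $L_u w \ge 0$ pointwise, and together with the boundary information above this makes $w$ a smooth weak supersolution of the mixed problem for $L_u$ in either subsector (with Neumann datum on $\Gamma_{\! 0}$ or $\Gamma_{\!\beta}$ inherited from~\eqref{P}).

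To conclude, I invoke Corollary~\ref{corollary} at $\alpha = \frac\beta 2$: at least one of $\lambda_1(H^1_{\gamma_{0\frac\beta 2} \cup \Gamma_{\!\frac\beta 2}}(\Omega_{0\frac\beta 2}))$ and $\lambda_1(H^1_{\gamma_{\frac\beta 2\beta} \cup \Gamma_{\!\frac\beta 2}}(\Omega_{\frac\beta 2\beta}))$ is nonnegative. Lemma~\ref{sign}, part~1, applied in the corresponding subsector (where $w$ is a nontrivial weak supersolution vanishing on the Dirichlet part $\gamma \cup \Gamma_{\!\frac\beta 2}$), then forces $w > 0$ or $w < 0$ strictly throughout that subsector; for the subsector $\Omega_{\frac\beta 2\beta}$ one uses the obvious symmetric version of Lemma~\ref{sign} with the Neumann side being $\Gamma_{\!\beta}$ instead of $\Gamma_{\! 0}$. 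The oddness of $w$ across $\Gamma_{\!\frac\beta 2}$ propagates this strict sign to the opposite subsector with opposite sign, so in either case $w_{\frac\beta 2} > 0$ in precisely one of $\Omega_{0\frac\beta 2}$ and $\Omega_{\frac\beta 2\beta}$, as claimed.
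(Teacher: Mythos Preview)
Your proof is correct and follows essentially the same approach as the paper: invoke Corollary~\ref{corollary} at $\alpha=\frac\beta2$, use the convexity of~$f$ to upgrade~\eqref{eq:diff} to $L_u\,w_{\frac\beta2}\ge 0$, apply Claim~1 of Lemma~\ref{sign} in the subsector where the first eigenvalue is nonnegative, and then transfer the strict sign to the other subsector via oddness of~$w_{\frac\beta2}$. You spell out the convexity computation and the Neumann condition on $\Gamma_{\!0}\cup\Gamma_{\!\beta}$ in more detail than the paper does, but the structure of the argument is the same.
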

\begin{proof}
By Corollary \ref{corollary} we have that one among $\lambda_1(H^1_{\gamma_{0\frac\beta2} \cup \,\Gamma_{\!\frac\beta2}}(\Omega_{0\frac\beta2}))$ and $\lambda_1(H^1_{\gamma_{\frac\beta2\beta} \cup \,\Gamma_{\!\frac\beta2}}(\Omega_{\frac\beta2\beta}))$ is nonnegative. Suppose that $\lambda_1(H^1_{\gamma_{0\frac\beta2} \cup \,\Gamma_{\!\frac\beta2}}(\Omega_{0\frac\beta2})) \ge 0$, the other case being analogous. By the convexity of~$f$, the function $v = w_\frac\beta2 \in C^2(\Omega_{0\frac\beta2} \cup \Gamma_{\! 0} \cup \Gamma_{\! \frac\beta2}) \cap C^0(\overline \Omega_{0\frac\beta2}) \cap H^1(\Omega_{0\frac\beta2})$ satisfies $L_u \, v \ge 0$ in~$\Omega_{0\frac\beta2}$. Moreover $v$~vanishes on $\gamma_{0\frac\beta2} \cup \Gamma_{\! \frac\beta2}$ and satisfies $\frac{\partial v}{\partial \nu} = -\frac{\partial v}{\partial \theta} = 0$ on $\Gamma_{\! 0}$. Taking into account that $w_\frac\beta2$ is odd with respect to~$\Gamma_{\!\frac\beta2}$, the conclusion follows by letting $\alpha = \frac\beta2$ in Claim~1 of Lemma~\ref{sign}.\end{proof}

\section{Proof of Theorem~\ref{teo-1}}\label{proof}

The proof of Theorem~\ref{teo-1} is based on a fine interplay of the derivative $u_\theta$ and the function~$w_\alpha$ introduced in Section~\ref{se:2}. In fact, the strict monotonicity of~$u$ with respect to~$\theta$ in~$\Omega_{0\beta}$ holds if and only if $w_\alpha > 0$ in~$\Omega_{0\alpha}$ for every $\alpha \in (0,\beta)$. By contrast, symmetry of~$u$ with respect to~$\Gamma_{\!\frac\beta2}$, and constancy with respect to~$\theta$ as a special case, occur when $w_\frac\beta2$ vanishes identically. First of all we observe

\begin{proposition}\label{solution}
If\/ $u \in C^2(\Omega_{0\beta})$ satisfies the equation in~\eqref{P} pointwise, then the derivative~$u_\theta$ still belongs to $C^2(\Omega_{0\beta})$ and is a classical solution to
\begin{equation}\label{L=0}
L_u \, u_\theta = 0 \ \ \text{ in }\Omega_{0\beta}.
\end{equation}
\end{proposition}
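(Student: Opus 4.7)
The plan is to show that differentiating the PDE with respect to $\theta$ commutes with the Laplacian and produces precisely the equation $L_u u_\theta = 0$. The key geometric fact is that the cylindrical rotations $A^N_\theta$ are isometries of $\R^N$ (equivalently of the flat manifold $\cal M$), so translation in the angular variable $t$ on $\cal M$ preserves the Beltrami-Laplace operator. In cylindrical coordinates $(r,\theta,z)$ the metric is $ds^2 = dr^2 + r^2 dt^2 + dz^2$, and its coefficients do not depend on $t$; consequently $\partial_\theta$ commutes with $\Delta$ wherever the relevant derivatives of $u$ exist.

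First I would upgrade the regularity of $u$. Since $f$ and $f'$ are locally H\"older continuous and $u\in C^2(\Omega_{0\beta})$ by assumption, the right-hand side $f(d(x),z,u)$ lies in $C^{1,\alpha}_{\mathrm{loc}}(\Omega_{0\beta})$, so Schauder interior estimates yield $u\in C^{3,\alpha}_{\mathrm{loc}}(\Omega_{0\beta})$. In particular $u_\theta \in C^{2,\alpha}_{\mathrm{loc}}(\Omega_{0\beta})$, which justifies all subsequent differentiations (this also supplies the regularity claim stated in the proposition, once the equation for $u_\theta$ is derived).

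Next I would differentiate the PDE. Applying $\partial_\theta$ to $-\Delta u(r,\theta,z) = f(r,z,u(r,\theta,z))$ and using the commutativity $\partial_\theta\Delta = \Delta\partial_\theta$ on the left, together with the chain rule on the right and the crucial observation that $d(x)=r$ and $z$ are invariant under cylindrical rotation (so $\partial_\theta d = 0$ and $\partial_\theta z = 0$), yields
\begin{equation*}
-\Delta u_\theta \;=\; f'(d(x),z,u)\, u_\theta \quad \text{in } \Omega_{0\beta},
\end{equation*}
which is exactly $L_u u_\theta = 0$.

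There is no serious obstacle here; the only subtlety is being careful about the commutation $\partial_\theta \Delta = \Delta \partial_\theta$. This can be verified either by writing the Laplacian explicitly in cylindrical coordinates, $\Delta = \partial_r^2 + r^{-1}\partial_r + r^{-2}\partial_\theta^2 + \Delta_z$, whose coefficients are $\theta$-independent, or more conceptually by noting that $u_\theta$ is the infinitesimal generator along the one-parameter family of solutions $u\circ A^N_{-s}$, each of which solves the same PDE thanks to the rotational symmetry of $f$ and of $\Omega_{0\beta}$.
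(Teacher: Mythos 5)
There is a genuine gap in your first step, and the rest of the argument leans on it. You claim that $f(d(x),z,u)$ belongs to $C^{1,\alpha}_{\mathrm{loc}}(\Omega_{0\beta})$ and invoke Schauder to get $u\in C^{3,\alpha}_{\mathrm{loc}}$. But the paper's hypothesis is only that $f(r,z,s)$ and $f'=\partial f/\partial s$ are locally H\"older continuous; nothing is assumed about $\partial f/\partial r$ or $\partial f/\partial z$. Take for instance $f(r,z,s)=\sqrt{|r-1|}+s$: it satisfies all the stated hypotheses, yet $x\mapsto f(d(x),z,u(x))$ is merely H\"older, Schauder gives only $u\in C^{2,\alpha}_{\mathrm{loc}}$, and $u$ need not be three times differentiable in the radial direction. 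Without third derivatives of $u$, the pointwise commutation $\Delta u_\theta=(\Delta u)_\theta$ that your main computation rests on is not justified a priori. Note that the proposition is phrased precisely to cover this situation: it asserts $u_\theta\in C^2$ even though $u$ itself may fail to be $C^3$, so the regularity of $u_\theta$ must come from the \emph{linear} equation it satisfies (whose coefficient $f'(d(x),z,u)$ \emph{is} H\"older), not from an upgrade of $u$.

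The paper avoids the issue by differentiating the weak formulation: inserting a test function $\varphi$ and its angular derivative $\varphi_\theta$, the terms in $\varphi_\theta$ cancel and one finds that $u_\theta$ is a weak solution of $L_u\,u_\theta=0$; only then does elliptic regularity (applied to this linear equation with H\"older coefficient) give $u_\theta\in C^{2,\alpha}_{\mathrm{loc}}$. Your closing remark about the one-parameter family of rotated solutions $u\circ A^N_{-s}$ is in fact the right way to rescue a ``classical'' version of your argument: the difference quotients $(u\circ A^N_{-s}-u)/s$ solve a linear equation with coefficients H\"older uniformly in $s$, are bounded in $C^{2,\alpha}_{\mathrm{loc}}$ by interior Schauder estimates for that linear equation, and converge to $u_\theta$, which therefore solves $L_u\,u_\theta=0$ classically. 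As written, however, you offer this only as a heuristic for the commutation identity rather than carrying it out, so the proof as it stands is incomplete.
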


\begin{proof}
As usual, we call \textit{test function} any smooth function $\varphi \in C^\infty(\Omega_{0\beta})$ compactly supported in~$\Omega_{0\beta}$. The set of all test functions is denoted by $ C^\infty_0(\Omega_{0\beta})$. To prove the claim, we differentiate the equality
$$
\int_{\Omega_{0\beta}} \nabla u \, \nabla \varphi \, dx
=
\int_{\Omega_{0\beta}}
\textstyle
f(d(x),z,u)
\, \varphi \, dx
$$
with respect to $\theta$. Since $\varphi_\theta$ is still a test function, the terms containing $\varphi_\theta$ cancel each other, and we obtain
\begin{equation}\label{weak_solution}
\int_{\Omega_{0\beta}} \nabla u_\theta \, \nabla \varphi \, dx
=
\int_{\Omega_{0\beta}}
\textstyle
f'(d(x),z,u)
\,
u_\theta
\, \varphi \, dx
.
\end{equation}
Hence $u_\theta$ is a weak solution of $L_u \, u_\theta = 0$. Since $f'$ is H\"older continuous by assumption, $u_\theta$ is a classical solution, as claimed. 
\end{proof}

To proceed further, observe that by Corollary~\ref{cor-3} the function~$w_{\frac \beta 2}(x)$ does not change sign in\/ $\Omega_{0\frac\beta2}$ nor in\/ $\Omega_{\frac\beta2\beta}$. We consider the case when $w_{\frac \beta 2}(x)\equiv 0$ in\/ $\Omega_{0\beta}$ first:

\begin{lemma}\label{lem-3}
Assume $u \in \cal X$ solves \eqref{P} and satisfies $\lambda_2(H^1_{\gamma_{0\beta}}(\Omega_{0\beta}))\geq 0$. If\/ $w_{\frac \beta 2}(x)\equiv 0$ in~$\Omega_{0\beta}$ then $u$~is constant with respect to\/~$\theta$.
\end{lemma}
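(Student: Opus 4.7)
The plan is to show that $u_\theta \equiv 0$ in $\Omega_{0\beta}$. I would start by gathering the boundary behaviour of $u_\theta$: the hypothesis $w_{\frac\beta2} \equiv 0$ reads $u(r,\theta,z)=u(r,\beta-\theta,z)$, whose $\theta$-derivative at $\theta=\frac\beta2$ yields $u_\theta = 0$ on $\Gamma_{\!\frac\beta2}$; the boundary condition of \eqref{P} gives $u_\theta = 0$ on $\Gamma_{\!0}\cup\Gamma_{\!\beta}$ directly; and the fact that the Dirichlet datum $g(d(x),z)$ in \eqref{instance} is independent of $\theta$ yields $u_\theta = 0$ on $\gamma_{0\beta}$. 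Hence $u_\theta$ vanishes on the whole of $\partial\Omega_{0\frac\beta2}$, and by Proposition~\ref{solution} it is a classical solution of $L_u u_\theta = 0$ in $\Omega_{0\frac\beta2}$.

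I would then argue by contradiction, assuming $u_\theta \not\equiv 0$ in $\Omega_{0\frac\beta2}$. The symmetry of $u$ across $\Gamma_{\!\frac\beta2}$ makes the coefficient $f'(d(x),z,u)$ symmetric, so the two eigenvalues appearing in Corollary~\ref{corollary} coincide, and in particular $\lambda_1(H^1_{\gamma_{0\frac\beta2}\cup\Gamma_{\!\frac\beta2}}(\Omega_{0\frac\beta2})) \ge 0$. Multiplying $L_u u_\theta = 0$ by $u_\theta$ and integrating by parts, every boundary contribution vanishes because $u_\theta = 0$ on $\partial\Omega_{0\frac\beta2}$, whence $Q_{\Omega_{0\frac\beta2}}(u_\theta) = 0$. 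Since $u_\theta$ is a nonzero competitor in $H^1_{\gamma_{0\frac\beta2}\cup\Gamma_{\!\frac\beta2}}(\Omega_{0\frac\beta2})$, the Rayleigh characterisation of $\lambda_1$ forces $\lambda_1 = 0$ and identifies $u_\theta$ as a first eigenfunction of the corresponding mixed problem. Consequently $u_\theta$ is strictly of one sign in $\Omega_{0\frac\beta2}$, say $u_\theta>0$, and the Euler--Lagrange equation for the Rayleigh quotient delivers the natural Neumann condition $\partial u_\theta/\partial\nu = 0$ on $\Gamma_{\!0}$.

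The contradiction then follows from Hopf's boundary point lemma applied at an interior point $p \in \Gamma_{\!0}$: the piece $\Gamma_{\!0}$ is flat, so the interior sphere condition holds trivially at $p$; combining $L_u u_\theta = 0$, $u_\theta > 0$ in $\Omega_{0\frac\beta2}$, and $u_\theta(p) = 0$, Hopf forces $\partial u_\theta/\partial\nu(p) < 0$, which contradicts the Neumann condition just obtained. Therefore $u_\theta \equiv 0$ in $\Omega_{0\frac\beta2}$, and the antisymmetry $u_\theta(r,\beta-\theta,z) = -u_\theta(r,\theta,z)$ coming from the symmetry of $u$ extends this vanishing to all of $\Omega_{0\beta}$, proving the lemma.

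The delicate point I foresee is the extraction of the Neumann condition $\partial u_\theta/\partial\nu = 0$ on $\Gamma_{\!0}$ from the minimisation property: this is a standard Euler--Lagrange computation, but it genuinely uses the $H^1$ regularity of $u_\theta$ encoded in the class $\cal X$ and requires testing with competitors that have free trace on $\Gamma_{\!0}$. Once that Neumann condition is in place, the Hopf contradiction on the flat boundary piece $\Gamma_{\!0}$ closes the argument at once.
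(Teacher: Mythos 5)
Your argument is correct, and it reaches the conclusion by a route that differs from the paper's in two essential respects. The paper stays in the full sector $\Omega_{0\beta}$: it notes that the first eigenfunction $\varphi_1$ of $L_u$ in $H^1_{\gamma_{0\beta}}(\Omega_{0\beta})$ is symmetric about $\Gamma_{\!\frac\beta2}$, hence $L^2$-orthogonal to the odd function $u_\theta$; it then uses the hypothesis $\lambda_2(H^1_{\gamma_{0\beta}}(\Omega_{0\beta}))\ge 0$ through the constrained variational characterization of the second eigenvalue to identify $u_\theta$ as a minimizer, extracts the natural Neumann condition on $\Gamma_{\!0}\cup\Gamma_{\!\beta}$ exactly as you do, and concludes by extending $u_\theta$ by zero across $\Gamma_{\!0}$ and invoking the unique continuation property \cite{Simon}. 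You instead halve the domain: the reflection symmetry of $f'(d(x),z,u)$ makes the two eigenvalues of Corollary~\ref{corollary} equal, so the splitting upgrades ``at least one is nonnegative'' to $\lambda_1(H^1_{\gamma_{0\frac\beta2}\cup\,\Gamma_{\!\frac\beta2}}(\Omega_{0\frac\beta2}))\ge 0$; since $u_\theta$ vanishes on all of $\partial\Omega_{0\frac\beta2}$ and has zero Rayleigh quotient, it becomes a first eigenfunction of the mixed problem on the half-sector, hence of one sign, and the contradiction comes from Hopf's boundary-point lemma on the flat piece $\Gamma_{\!0}$ rather than from unique continuation. Your endgame is the more elementary of the two, since Hopf's lemma for $-\Delta-c(x)$ with bounded $c$ at a boundary zero of a one-signed solution is precisely the tool the paper already deploys in Lemma~\ref{lem-4}, whereas the paper must appeal to unique continuation for Schr\"odinger operators. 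Both proofs share the same genuinely delicate step --- passing from ``$u_\theta$ minimizes the Rayleigh quotient over a space with free trace on the flat face(s)'' to the pointwise Neumann condition there --- and both use $u_\theta\in H^1$ (the class $\cal X$) to justify $Q(u_\theta)=0$ by density; you have correctly identified this as the crux.
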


\begin{proof}
If $w_{\frac \beta 2}(x)\equiv 0$ then $u$ is symmetric with respect to $\Gamma_{\!\frac\beta2}$ and the linearized operator~$L_u$ is invariant under reflections about $\Gamma_{\!\frac\beta2}$. In particular, the first eigenfunction $\varphi_1$ in $H^1_{\gamma_{0\beta}}(\Omega_{0\beta})$ is symmetric with respect to $\Gamma_{\!\frac\beta2}$. Furthermore, since $u_\theta \in H^1_0(\Omega_{0\beta})$, we may let $\varphi$ in~\eqref{weak_solution} approach~$u_\theta$ in~$H^1(\Omega_{0\beta})$ and deduce $Q_{\Omega_{0\beta}}(u_\theta) = 0$. In order to prove the lemma it is enough to show that $u_\theta$ vanishes identically in~$\Omega_{0\beta}$. The argument is by contradiction. Observe that $\varphi_1 \perp u_\theta$, i.e., $\varphi_1$ is orthogonal to~$u_\theta$ in $L^2(\Omega_{0\beta})$, because $u_\theta$ is odd with respect to~$\Gamma_{\!\frac\beta2}$. Furthermore, by assumption we have
$$
0
\le
\lambda_2(H^1_{\gamma_{0\beta}}(\Omega_{0\beta}))
=
\inf\left\{\left.\frac{\, Q_{\Omega_{0\beta}}(\varphi) \,}{\|\varphi\|^2_{\Omega_{0\beta}}}\ \right|\ \varphi_1 \perp \varphi\in H^1_{\gamma_{0\beta}}(\Omega_{0\beta}) \setminus \{0\} \right\}.
$$
If $u_\theta \not \equiv 0$, we see that the infimum is achieved at $\varphi = u_\theta$ because $Q_{\Omega_{0\beta}}(u_\theta) = 0$. But then $u_\theta$, being a minimizer of the Rayleigh quotient in $H^1_{\gamma_{0\beta}}(\Omega_{0\beta})$ under the constraint $\varphi_1 \perp \varphi$, should satisfy the Neumann conditions $\partial u_\theta / \partial \nu = 0$ on $\Gamma_{\!0} \cup \Gamma_{\!\beta}$. If this were the case, we could extend $u_\theta$ to $\Omega_{-\beta,\beta}$ by letting
\begin{equation}\label{extension}
\tilde u_\theta(x)
=
\begin{cases}
u_\theta(x), &x \in \Omega_{0\beta} \cup \Gamma_{\!0};
\\
\noalign{\medskip}
0, &x \in \Omega_{-\beta,0},
\end{cases}
\end{equation}
thus obtaining a function $\tilde u_\theta \in C^1(\Omega_{-\beta,\beta})$ that satisfies $L_u \, \tilde u_\theta = 0$ in the weak sense in~$\Omega_{-\beta,\beta}$ and vanishes identically in $\Omega_{-\beta,0}$. Since we are assuming that $u_\theta$ does not vanish identically in~$\Omega_{0\beta}$, this is in contrast with the unique continuation property \cite[p.~519]{Simon}, and the lemma follows.
\end{proof}

Next we consider the case when $w_{\frac \beta 2}> 0$ in $\Omega_{0\frac \beta 2}$. The case when $w_{\frac \beta 2}>0$ in $\Omega_{\frac \beta 2 \beta}$ is similar. Taking Corollary~\ref{corollary} into account, we may assume $\lambda_1(H^1_{\gamma_{0\frac\beta2} \cup \,\Gamma_{\!\frac\beta2}}(\Omega_{0\frac\beta2}))\geq 0$ without loss of generality. Let us show that the inequality $u_\theta > 0$ holds:

\begin{lemma}\label{lem-4}
Assume $u \in \cal X$ is a solution to~\eqref{P} such that\/ $\lambda_1(H^1_{\gamma_{0\frac\beta2} \cup \,\Gamma_{\!\frac\beta2}}(\Omega_{0\frac\beta2}))\ge 0$. If\/ $w_{\frac \beta 2}(x)>0$ in\/~$\Omega_{0\frac \beta 2}$ then
\begin{equation}\label{u_theta>0}
u_\theta > 0\ \ \text{in\/ } \Omega_{0\frac \beta 2}\cup \Gamma_{\!\frac \beta 2}.
\end{equation}
\end{lemma}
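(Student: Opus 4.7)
The plan is to run a rotating-plane argument establishing $w_\alpha > 0$ in $\Omega_{0\alpha}$ for every $\alpha \in (0,\tfrac\beta2]$, and then convert this into $u_\theta>0$ via strict $\theta$-monotonicity, the strong maximum principle, and Hopf's boundary-point lemma.

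Set
\[\alpha_0 := \inf\bigl\{\alpha \in (0, \tfrac\beta 2] : w_{\tilde\alpha} > 0 \text{ in } \Omega_{0\tilde\alpha}\ \text{for every } \tilde\alpha \in [\alpha, \tfrac\beta 2]\bigr\};\]
the hypothesis $w_{\frac\beta 2}>0$ places $\tfrac\beta 2$ in this set, so $\alpha_0\le\tfrac\beta 2$. Suppose for contradiction that $\alpha_0>0$. Passing to the limit $\alpha \searrow \alpha_0$, continuity of $w_\alpha$ in $(\alpha,x)$ up to $\overline{\Omega}_{0\alpha_0}$ (including on the Neumann face $\Gamma_{\!0}$) gives $w_{\alpha_0}\ge 0$ there, and the convexity of $f$ (via the identity $L_u w_\alpha = (c_\alpha - f'(u))\,w_\alpha$) yields $L_u w_{\alpha_0}\ge 0$ in $\Omega_{0\alpha_0}$. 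Domain monotonicity of the first eigenvalue (extension by zero from $\Omega_{0\alpha_0}$ to $\Omega_{0,\beta/2}$ across $\Gamma_{\!\alpha_0}$) transfers the hypothesis to $\lambda_1(H^1_{\gamma_{0\alpha_0}\cup\Gamma_{\!\alpha_0}}(\Omega_{0\alpha_0}))\ge 0$, so Lemma~\ref{sign}(1), applied to $v=w_{\alpha_0}$ (a weak supersolution of the Dirichlet problem, vanishing on $\Gamma_{\!\alpha_0}$), yields either $w_{\alpha_0}\equiv 0$ or $w_{\alpha_0}>0$ in $\Omega_{0\alpha_0}$.

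The symmetric alternative $w_{\alpha_0}\equiv 0$ in $\Omega_{0\alpha_0}$ is excluded as follows: $w_{\alpha_0}$ solves the linear equation~\eqref{eq:diff} throughout $\Omega_{0\beta}$, so unique continuation \cite[p.~519]{Simon} forces $w_{\alpha_0}\equiv 0$ on all of $\Omega_{0\beta}$. Combined with the evenness of $u$ across $\Gamma_{\!0}$ built into the manifold extension, this makes $u$ $2\alpha_0$-periodic in $\theta$, so at the interior point $\theta_0:=\tfrac\beta 2-\alpha_0\in(0,\tfrac\beta 2)$ we have $w_{\frac\beta 2}(r,\theta_0,z)=u(r,\tfrac\beta 2+\alpha_0,z)-u(r,\tfrac\beta 2-\alpha_0,z)=0$ by periodicity, contradicting $w_{\frac\beta 2}>0$. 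In the remaining alternative $w_{\alpha_0}>0$ in $\Omega_{0\alpha_0}$, one runs a standard perturbation: Hopf's lemma on $\Gamma_{\!\alpha_0}$ gives $u_\theta(r,\alpha_0,z)>0$ there (from $\partial_\theta w_{\alpha_0}|_{\Gamma_{\!\alpha_0}}=-2u_\theta$), and a parallel Hopf analysis at any point of $\Gamma_{\!0}$ where $w_{\alpha_0}$ vanishes (yielding $u_\theta(r,2\alpha_0,z)<0$ there) ensures $w_\alpha\ge 0$ on $\partial\Omega_{0\alpha}$ for every $\alpha$ slightly below $\alpha_0$. Lemma~\ref{sign}(1) then applies again to give the trichotomy for $w_\alpha$, and uniform convergence $w_\alpha\to w_{\alpha_0}$ on compacts rules out both the $\equiv 0$ and the $<0$ alternatives, forcing $w_\alpha>0$ in $\Omega_{0\alpha}$. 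This contradicts the minimality of $\alpha_0$.

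Hence $\alpha_0=0$ and $w_\alpha>0$ in $\Omega_{0\alpha}$ for every $\alpha\in(0,\tfrac\beta 2]$. For any $0<\theta_1<\theta_2\le\tfrac\beta 2$, taking $\alpha=(\theta_1+\theta_2)/2$ reads $w_\alpha(r,\theta_1,z)>0$ as $u(r,\theta_2,z)>u(r,\theta_1,z)$, so $u_\theta\ge 0$ in $\Omega_{0,\beta/2}$. Since $L_u u_\theta=0$ by Proposition~\ref{solution} and $u_\theta\not\equiv 0$ (else $w_{\frac\beta 2}\equiv 0$), the strong maximum principle upgrades this to $u_\theta>0$ in $\Omega_{0,\beta/2}$; finally, Hopf applied to $w_{\frac\beta 2}$ on $\Gamma_{\!\frac\beta 2}$ (giving $\partial_\theta w_{\frac\beta 2}=-2u_\theta<0$ there) extends strict positivity up to that face, proving~\eqref{u_theta>0}. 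The main obstacle is the perturbation step in the non-symmetric case, where one must verify $w_\alpha\ge 0$ on the Neumann face $\Gamma_{\!0}$ for $\alpha$ close to $\alpha_0$ before reapplying the sign-preservation lemma; this requires a separate Hopf-type analysis at points where $w_{\alpha_0}$ touches zero on $\Gamma_{\!0}$.
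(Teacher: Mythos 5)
Your overall strategy is far more elaborate than what is needed, and it contains a genuine gap at exactly the point you flag as ``the main obstacle.'' In the perturbation step you must show that $w_\alpha \ge 0$ on $\Gamma_{\!0}$ for all $\alpha$ slightly below $\alpha_0$, i.e.\ $u(r,2\alpha,z)\ge u(r,0,z)$ uniformly over $\Gamma_{\!0}$. What you actually have is pointwise information: at points of $\Gamma_{\!0}$ where $w_{\alpha_0}>0$, continuity helps for $\alpha$ near $\alpha_0$ \emph{depending on the point}; at points where $w_{\alpha_0}=0$, Hopf gives $u_\theta(r,2\alpha_0,z)<0$, again a pointwise statement. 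Turning this into a uniform left-neighborhood of $\alpha_0$ requires a compactness argument that breaks down near the edges of $\Gamma_{\!0}$ (where $\Gamma_{\!0}$ meets $\gamma_{0\beta}$ or $\gamma$): the infimum of $w_\alpha$ over $\Gamma_{\!0}$ can be approached along a sequence escaping to the edge, where the Hopf derivative degenerates. This is the classical corner difficulty of moving-plane arguments, and neither your sketch nor the ``separate Hopf-type analysis'' you defer resolves it. Note that in the paper this boundary information on $\Gamma_{\!0}$ (the statements \eqref{still} and \eqref{complete} used in Parts V--VI of the proof of Theorem~\ref{teo-1}) is \emph{derived from} \eqref{u_theta>0}, so your route inverts the intended logical order and cannot borrow those steps.

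The paper's own proof is two lines and avoids the rotating plane entirely inside this lemma. Since $w_{\frac\beta2}>0$ in $\Omega_{0\frac\beta2}$ solves the linear equation \eqref{eq:diff} and vanishes on $\Gamma_{\!\frac\beta2}$, the Hopf boundary-point lemma gives $\partial w_{\frac\beta2}/\partial\nu=-2u_\theta<0$ on $\Gamma_{\!\frac\beta2}$, hence $u_\theta>0$ there (you do use this, but only at the very end). Then one applies Claim~2 of Lemma~\ref{sign} directly to $v=u_\theta$, which solves $L_u u_\theta=0$ (Proposition~\ref{solution}), vanishes on $\gamma_{0\frac\beta2}\cup\Gamma_{\!0}$, and is positive on $\Gamma_{\!\frac\beta2}$: the hypothesis $\lambda_1(H^1_{\gamma_{0\frac\beta2}\cup\Gamma_{\!\frac\beta2}}(\Omega_{0\frac\beta2}))\ge0$ propagates the strict positivity from $\Gamma_{\!\frac\beta2}$ to all of $\Omega_{0\frac\beta2}$ in one stroke. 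Your argument uses the spectral hypothesis only through domain monotonicity on the smaller sectors; using it on the full half-sector, as the paper does, is what makes the direct proof work. I recommend abandoning the decreasing rotating plane here; the pieces of your proposal that survive (the Hopf computation \eqref{negative} and the appeal to Lemma~\ref{sign}) are precisely the paper's proof.
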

\begin{proof}
Arguing as in the proof of Lemma~\ref{lem-3} we see that the angular derivative $u_\theta$ (which cannot be odd, now, because $u$ is not symmetric about $\Gamma_{\!\frac\beta2}$) satisfies~\eqref{L=0} together with the boundary condition $u_\theta=0$ on $\gamma_{0\beta}\cup \Gamma_0\cup\Gamma_\beta$. Hence $u_\theta \in H^1_0(\Omega_{0\beta}) \cap C^1(\Omega_{0\beta} \cup \Gamma_{\! 0} \cup \Gamma_{\! \beta})$. Let us prove that $u_\theta>0$ on $\Gamma_{\!\frac\beta2}$. 
By~\eqref{eq:diff}, $w_\frac \beta2$ satisfies $-\Delta w_\frac \beta2-c_\frac \beta2(x) \, w_\frac \beta2=0$ in $\Omega_{0\frac \beta2}$.
Since $w_{\frac \beta 2}(x)>0$ in $\Omega_{0\frac \beta 2}$ by assumption, and $w=0$ on~$\Gamma_{\!\frac \beta 2}$ by definition, the Hopf boundary lemma \cite[Theorem~1.28]{Damascelli-Pacella} implies that $\frac{\partial w}{\partial \nu}<0$ on $\Gamma_{\!\frac \beta 2}$. Here $\nu$ is the outward unit normal to~$\Omega_{0\frac\beta2}$ at~$\Gamma_{\!\frac \beta 2}$. This means
\begin{equation}\label{negative}
\left.\frac{\partial w}{\partial \nu}\right|_{\Gamma_{\!\frac \beta 2}}=-2\left.\frac{\partial u}{\partial \nu}\right|_{\Gamma_{\!\frac \beta 2}}=-2\left.\frac{\partial u}{\partial \theta}\right|_{\Gamma_{\!\frac \beta 2}}<0.
\end{equation}
Hence $u_\theta > 0$ on~$\Gamma_{\!\frac\beta2}$, as claimed, and the conclusion follows from Claim~2 of Lemma~\ref{sign}.
\end{proof}

We are now ready to apply the rotating plane method.

\begin{proof}[Proof of Theorem~\ref{teo-1}]
We assume that $\lambda_2(H^1_{\gamma_{0\beta}}(\Omega_{0\beta})) \ge 0$ and prove that either $u_\theta \equiv 0$ or $u_\theta$~keeps its sign in~$\Omega_{0\beta}$. In the last case, from~\eqref{L=0} and the boundary condition it follows immediately that $u_\theta$ is a first Dirichlet eigenfunction of~$L_u$ in~$\Omega_{0\beta}$ and therefore $\lambda_1(H^1_0(\Omega_{0\beta})) = 0$. At the end of the proof, a final observation shows that if $\lambda_1(H^1_{\gamma_{0\beta}}(\Omega_{0\beta})) \ge 0$ then $u$ is constant in~$\theta$, which completes the proof of the theorem.

If $w_{\frac \beta 2}\equiv 0$ in\/ $\Omega_{0\beta}$ then $u$ is independent of~$\theta$ by Lemma~\ref{lem-3}. Otherwise, by Corollary~\ref{cor-3}, either $w_\frac\beta2 > 0$ in~$\Omega_{0\frac\beta2}$ or $w_\frac\beta2 > 0$ in~$\Omega_{\frac\beta2\beta}$. By Corollary~\ref{corollary} we may assume $\lambda_1(H^1_{\gamma_{0\frac\beta2} \cup \,\Gamma_{\!\frac\beta2}}(\Omega_{0\frac\beta2}))\geq 0$ and $w_{\frac \beta 2} > 0$ in $\Omega_{0\frac \beta 2}$ without loss of generality (indeed, if $w_\frac\beta2 < 0$ in~$\Omega_{0\frac\beta2}$ the argument is similar). Let us prove that $u_\theta > 0$ in $\Omega_{0\beta}$. Denote by $A'$ the set of all $\alpha' \in [\frac\beta2,\beta]$ having the property that for every $\alpha \in (0,\alpha')$ the function $w_{\alpha}$ is positive in~$\Omega_{0\alpha}$.

\textit{Part I.} The set~$A'$ contains $\frac\beta2$ and therefore it is not empty. To see this, we have to prove that $u(x) < u(\sigma_\alpha(x))$ for every $\alpha \in (0,\frac\beta2)$ and $x \in \Omega_{0\alpha}$, which is equivalent to say that $u(r,\theta_1,z) < u(r,\theta_2,z)$ for $0 < \theta_1 < \theta_2$ such that $\theta_1 + \theta_2 < \beta$. If $\theta_2 \le \frac\beta2$ the conclusion follows from~\eqref{u_theta>0}. Otherwise we must have $\theta_1 < \beta-\theta_2 < \frac\beta2 < \theta_2$ and we argue as follows. Let $x_i$, for $i = 1,2$, be the point whose cylindrical coordinates are $(r,\theta_i,z)$, and observe that the coordinates of $\sigma_\frac\beta2(x_2)$ are $(r, \, \beta - \theta_2, \, z)$. We may write $u(\sigma_\frac\beta2(x_2)) < u(x_2)$ because we are considering $w_\frac\beta2 > 0$ in~$\Omega_{0\frac\beta2}$ (hence $w_\frac\beta2 < 0$ in~$\Omega_{\frac\beta2\beta}$). Furthermore we have $u(x_1) < u(\sigma_\frac\beta2(x_2))$ because of~\eqref{u_theta>0}, and therefore $\frac\beta2 \in A'$ as claimed. We note for later usage that the preceding argument still holds when $\theta_1 = 0$, hence
\begin{equation}\label{still}
\mbox{$w_\alpha > 0$ on $\Gamma_{\!0}$ for every $\alpha \in (0,\frac\beta2)$.}
\end{equation}

\textit{Part II.} The set $A'$ is a closed subinterval of $[\frac\beta2,\beta]$. Indeed, by the definition of $A'$ it follows that if $\alpha',\alpha''$ satisfy $\frac\beta2 \le \alpha' \le \alpha''$ and $\alpha'' \in A'$ then $\alpha' \in A'$, hence $A'$ is an interval. It also follows immediately that $A'$ is closed, and we may write $A' = [\frac\beta2,\tilde\alpha]$ for some $\tilde\alpha \in [\frac\beta2,\beta]$.

\textit{Part III.} The derivative $u_\theta$ is positive in the sector $\Omega_{0\tilde\alpha}$. To check this, we verify that $u_\theta > 0$ along~$\Gamma_{\!\alpha}$ for each $\alpha \in (0,\tilde\alpha)$. The argument is the same as in Lemma~\ref{lem-4}: since $w_\alpha$ is positive in~$\Omega_{0\alpha}$ by the definition of~$A'$, and satisfies~\eqref{eq:diff}, by the Hopf boundary-point lemma it must also satisfy $\partial w_\alpha / \partial \nu < 0$ along~$\Gamma_{\!\alpha}$, where $w_\alpha = 0$. Hence $u_\theta > 0$ along~$\Gamma_{\!\alpha}$, as claimed.

\textit{Part IV.} If $\tilde\alpha < \beta$ then $w_{\tilde\alpha} > 0$ in~$\Omega_{0\tilde\alpha}$. Indeed, by continuity we may write $w_{\tilde\alpha} \ge 0$ in~$\Omega_{0\tilde\alpha}$, and by the convexity of~$f$ it follows that $w_{\tilde\alpha}$ satisfies the inequality $L_u \, w_{\tilde\alpha} \ge 0$. Hence either $w_{\tilde\alpha} > 0$ or $w_{\tilde\alpha} \equiv 0$ in~$\Omega_{0\tilde\alpha}$ by the strong maximum principle. Let us exclude the second case. If $w_{\tilde\alpha} \equiv 0$ in~$\Omega_{0\tilde\alpha}$ then $\tilde\alpha > \frac\beta2$ because we are assuming $w_\frac\beta2 > 0$ in~$\Omega_{0\frac\beta2}$. Furthermore, by differentiating the equality $u(r, \, 2\tilde\alpha - \theta, \, z) = u(r,\theta,z)$ we obtain $u_\theta(r, \, 2\tilde\alpha - \theta, \, z) = -u_\theta(r,\theta,z)$ for $(r,\theta,z) \in \Omega_{0\tilde\alpha}$, which yields at $\tilde\theta = 2\tilde\alpha - \beta \in (0,\tilde\alpha)$
$$
0
=
\left.\frac{\partial u}{\partial \nu}\right|_{\Gamma_{\!\beta}}
=
u_\theta(r,\beta,z)
=
-u_\theta(r,\tilde\theta,z)
,
$$
contradicting Part~III. Hence we can write the implication $\tilde\alpha < \beta \implies w_{\tilde\alpha} > 0$  in~$\Omega_{0\tilde\alpha}$.

\textit{Part V.} Recall that for every $\alpha \in (0,\beta)$ the function $w_\alpha$ is well defined in~$\Omega_{0\alpha}$ and satisfies \eqref{eq:diff}-\eqref{boundary_conditions} together with~\eqref{still}. To complete the boundary conditions, let us check that
\begin{equation}\label{complete}
\mbox{$w_\alpha > 0$ on~$\Gamma_{\!0}$ for every $\alpha \in [\frac\beta2,\beta)$,}
\end{equation}
which is equivalent to $u(r,0,z) < u(r,2\alpha,z)$. By the definition of the extended function~$u$, we have $u(r,2\alpha,z) = u(r, \, 2\beta - 2\alpha, \, z)$. The point $(r, \, 2\beta - 2\alpha, \, z)$ can be rewritten as $\sigma_{\alpha'}(r,0,z)$ by choosing $\alpha' = \beta - \alpha > 0$. If $\alpha \in (\frac\beta2,\beta)$ then $\alpha' < \frac\beta2$ and the inequality $u(r,0,z) < u(\sigma_{\alpha'}(r,0,z))$ follows from~\eqref{still}. Finally, since $w_\frac\beta2$ is positive in $\Omega_{0\frac\beta2}$ by assumption and satisfies $\partial w_\frac\beta2 /\partial \nu = 0$ on~$\Gamma_{\! 0}$, the Hopf boundary point lemma prevents it from vanishing  there.

\textit{Part VI.} The second endpoint $\tilde\alpha \ge \frac\beta2$ of the interval~$A'$ is in fact~$\beta$. To prove this, we show that if $\tilde \alpha < \beta$ then for some $\varepsilon_0 > 0$ and for every $\alpha \in [\tilde \alpha, \, \tilde \alpha + \varepsilon_0)$ we have $w_\alpha > 0$ in~$\Omega_{0\alpha}$, which contradicts the definition of~$\tilde \alpha$. In order to reach our goal we need a suitable maximum principle. Observe, to begin with, that the sup-norm $\Vert c_\alpha \Vert_\infty$ is bounded uniformly with respect to~$\alpha$. Therefore, by the weak maximum principle in small domains \cite[Theorem~1.20]{Damascelli-Pacella} there exists $\delta > 0$ (independent of~$\alpha$) such that if an open subset $D \subset \Omega_{0\alpha}$ satisfies $|D| < \delta$, and if $w_\alpha \ge 0$ on~$\partial D$, then $w_\alpha \ge 0$ in all of~$D$. Accordingly, let us fix a (nonempty) compact subset $K \subset \Omega_{0\tilde\alpha}$ such that $|\Omega_{0\tilde\alpha} \setminus K| < \frac\delta2$ and define
$$
\eta = \min_K w_{\tilde \alpha}
.
$$
Recall that $\eta > 0$ by Part~IV. Then, for a convenient $\varepsilon_0 > 0$ we achieve that for all $\alpha \in [\tilde \alpha, \allowbreak \, \tilde \alpha + \varepsilon_0)$ the open set $D_\alpha = \Omega_{0\alpha} \setminus K$ satisfies $|D_\alpha| < \delta$ and
$$
\mbox{$w_\alpha > \frac\eta2$ in~$K$.}
$$
Taking into account the boundary conditions established in Part~V, we may write $w_\alpha \ge 0$ on~$\partial D_\alpha$ and therefore, by the weak maximum principle in small domains, the inequality $w_\alpha \ge 0$ holds in~$D_\alpha$, and consequently in all of~$\Omega_{0\alpha}$. More precisely, since $w_\alpha > 0$ in~$K$, by the strong maximum principle we obtain $w_\alpha > 0$ in~$\Omega_{0\alpha}$ for every $\alpha\in [\tilde \alpha, \, \tilde \alpha + \varepsilon_0)$. This contradiction proves that $\tilde\alpha = \beta$, as claimed.

\textit{Conclusion.} By Part VI, the solution $u$ is strictly increasing with respect to~$\theta$ in~$\Omega_{0\beta}$, and by Part~III it satisfies~$u_\theta > 0$. We have thus proved that if $\lambda_2(H^1_{\gamma_{0\beta}}(\Omega_{0\beta})) \ge 0$ then either $u_\theta$ vanishes identically in~$\Omega_{0\beta}$ or it keeps its sign there. In order to complete the proof, it suffices to check that if $\lambda_1(H^1_{\gamma_{0\beta}}(\Omega_{0\beta})) \ge 0$ the last case cannot occur, i.e., $u_\theta$ does not keep its sign in~$\Omega_{0\beta}$. In fact, if this were the case, we would have $0 \le \lambda_1(H^1_{\gamma_{0\beta}}(\Omega_{0\beta})) \le \lambda_1(H^1_0(\Omega_{0\beta})) = 0$ by the comparison of Dirichlet and Neumann eigenvalues (see, for instance, \cite[Theorem~1, p.~408]{Courant-Hilbert} or \cite[Example 11.13]{Helffer}). This shows that $u_\theta$ would also minimize the Rayleigh quotient in~$H^1_{\gamma_{0\beta}}(\Omega_{0\beta})$ and therefore it should satisfy the Neumann boundary condition $\partial u_\theta / \partial \nu = 0$ on $\Gamma_{\!0} \cup \Gamma_{\!\beta}$. But then $u_\theta$ would extend as in~\eqref{extension} to a $C^1$-eigenfunction in~$\Omega_{-\beta,\beta}$ vanishing identically in $\Omega_{-\beta,0}$, and this is impossible by the unique continuation property.
\end{proof}

\section{Cylindrical domains}\label{cylindrical}

When the opening of a sector-like domain tends to zero, the domain is asymptotic to a cylinder. To manage with this case we have to arrange the notation. Changes are specific to the present section. Here we denote by $\omega$ a Lipschitz, bounded, $(N - 1)$-dimensional domain lying in the hyperplane $x_1 = 0$ and by $\Omega_{\theta_1\theta_2}$ the cylinder $(\theta_1,\theta_2) \times \omega$, where $\theta_1,\theta_2 \in \mathbb R$ satisfy $\theta_1 < \theta_2$. Given $\beta \in (0,+\infty)$, we let $\mathcal X$ be the set of functions $u\in C^2(\Omega_{0\beta})\cap C^0(\overline \Omega_{0\beta})\cap H^1( \Omega_{0\beta})$ such that $\frac{\partial u}{\partial x_1}\in H^1(\Omega_{0\beta})$ and we consider functions $u\in \mathcal X$ that satisfy, pointwise, the boundary-value problem:
\begin{equation}\label{CP}
\left\{\begin{array}{ll}
-\Delta u = f(z,u) \qquad & \text{ in } \Omega_{0\beta}, \\
\noalign{\medskip}
\frac{\partial u}{\partial x_1}=0 & \text{ on $\partial \Omega_{0\beta}$, }
\end{array} \right.
\end{equation}
where $z = (x_2, \ldots, x_N) \in \mathbb R^{N - 1}$ and $f(z,u)$ is locally H\"older continuous in $\mathbb R^N$ together with its derivative $f' = \partial f / \partial u$. 
Letting $\Gamma_{\!\alpha} = \{\alpha\} \times \omega$ and $\gamma_{\theta_1\theta_2} = [\theta_1,\theta_2] \times \partial \omega$, problem \eqref{CP} is equivalent to the following mixed boundary-value problem:
$$
\left\{\begin{array}{ll}
-\Delta u = f(z,u) \qquad & \text{ in } \Omega_{0\beta}, \\
\noalign{\medskip}
u=g(z) & \text{ on } \gamma_{0\beta},\\
\noalign{\medskip}
\frac{\partial u}{\partial \nu}=0 & \text{ on } \Gamma_{\! 0} \cup \Gamma_{\! \beta}
\end{array} \right.
$$
where $g$ is continuous on~$\gamma_{0\beta}$. The preceding results can be extended to such a case. We have:

\begin{theorem}
Let $u\in \mathcal X$ be a solution to \eqref{CP}, where $f(z,u)$ is convex with respect to~$u$.
\begin{enumerate}
\item If\/ $\lambda_1(H^1_{\gamma_{0\beta}}(\Omega_{0\beta}))\geq 0$ then $u$ is constant with respect to~$x_1$.
\item If\/ $\lambda_1(H^1_{\gamma_{0\beta}}(\Omega_{0\beta})) < 0 \le \lambda_2(H^1_{\gamma_{0\beta}}(\Omega_{0\beta}))$ then $u$ is either constant with respect to~$x_1$ or strictly monotone with respect to~$x_1$ in\/ $\Omega_{0\beta}$. In the last case, the derivative $u_1 = \partial u / \partial x_1$ does not vanish in\/~$\Omega_{0\beta}$, the first Dirichlet eigenvalue $\lambda_1(H^1_0(\Omega_{0\beta}))$ equals zero and $u_1$ is a corresponding eigenfunction.
\end{enumerate}
\end{theorem}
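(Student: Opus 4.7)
The plan is to rerun the proof of Theorem~\ref{teo-1} with the angular variable $\theta$ replaced by the Cartesian coordinate $x_1$ and cylindrical rotations replaced by orthogonal reflections perpendicular to the $x_1$-axis. Because the cylinder carries no periodicity, the auxiliary manifold $\mathcal{M}$ of Section~\ref{se:2} is not needed: the reflection $\sigma_\alpha(x_1,z):=(2\alpha-x_1,z)$ is already a globally defined isometry of $\R^N$ for every $\alpha\in\R$, and no overlap issue can arise. First I would extend $u$ to $(-\beta,2\beta)\times\omega$ by setting $u(x_1,z):=u(-x_1,z)$ on $(-\beta,0)\times\omega$ and $u(x_1,z):=u(2\beta-x_1,z)$ on $(\beta,2\beta)\times\omega$; the Neumann condition on $\Gamma_{\!0}\cup\Gamma_{\!\beta}$ together with the absence of explicit $x_1$-dependence in $f(z,u)$ makes the extension $C^2$ across the two hyperplanes and a solution to the PDE in the whole enlarged cylinder. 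Setting $w_\alpha(x):=u(\sigma_\alpha(x))-u(x)$ on $\Omega_{0\alpha}$, one obtains an equation of the form~\eqref{eq:diff} with $w_\alpha=0$ on $\gamma_{0\alpha}\cup\Gamma_{\!\alpha}$; convexity of $f$ in $u$ gives $L_u w_\alpha\ge 0$ wherever $w_\alpha\ge 0$. Lemma~\ref{lem-1}, Corollary~\ref{corollary}, Lemma~\ref{sign} and Corollary~\ref{cor-3} transfer verbatim, since their proofs depend only on the existence of the dividing hyperplane $\Gamma_{\!\alpha}$.

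A central preliminary observation is that the derivative $u_1:=\partial u/\partial x_1$ lies in $H^1_0(\Omega_{0\beta})$. Indeed $u_1=0$ on $\Gamma_{\!0}\cup\Gamma_{\!\beta}$ by the Neumann condition, and on $\gamma_{0\beta}$ the Dirichlet datum $g(z)$ is independent of $x_1$, so $u_1$ vanishes there as well. Differentiating the equation gives $L_u u_1=0$, hence $Q_{\Omega_{0\beta}}(u_1)=0$. If $w_{\beta/2}\equiv 0$ then $u$ is symmetric about $\Gamma_{\!\beta/2}$, $u_1$ is odd about it, and therefore orthogonal in $L^2$ to the (symmetric) first eigenfunction $\varphi_1$. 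As in Lemma~\ref{lem-3}, the constrained minimax characterization of $\lambda_2(H^1_{\gamma_{0\beta}}(\Omega_{0\beta}))\ge 0$ forces $u_1$ to be a minimizer of the Rayleigh quotient under the constraint $\varphi\perp\varphi_1$, hence to satisfy the Neumann conditions $\partial u_1/\partial \nu=0$ on $\Gamma_{\!0}\cup\Gamma_{\!\beta}$; extending it by zero across $\Gamma_{\!0}$ to a $C^1$ solution of $L_u=0$ on $\Omega_{-\beta,\beta}$ contradicts unique continuation unless $u_1\equiv 0$.

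If instead $w_{\beta/2}$ is strictly positive on one side of $\Gamma_{\!\beta/2}$ (the other sign being analogous), the rotating-plane scheme of the proof of Theorem~\ref{teo-1} applies step by step: Lemma~\ref{lem-4} provides $u_1>0$ along $\Gamma_{\!\beta/2}$; the set $A'\subset[\beta/2,\beta]$ of $\alpha'$ for which $w_\alpha>0$ in $\Omega_{0\alpha}$ for every $\alpha\in(0,\alpha')$ is a closed subinterval $[\beta/2,\tilde\alpha]$; the strong maximum principle excludes $w_{\tilde\alpha}\equiv 0$ exactly as in Part~IV; and the weak maximum principle in small domains together with Hopf's lemma on $\Gamma_{\!\alpha}$ forces $\tilde\alpha=\beta$. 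The concluding comparison between the first Dirichlet and Neumann eigenvalues then excludes strict monotonicity when $\lambda_1(H^1_{\gamma_{0\beta}}(\Omega_{0\beta}))\ge 0$, verbatim as at the end of the proof of Theorem~\ref{teo-1}. I expect no genuine obstacle beyond bookkeeping; the one point deserving care is the boundary-positivity analogues of \eqref{still} and \eqref{complete} on $\Gamma_{\!0}$ and $\Gamma_{\!\beta}$, for which the reflective extension of $u$ across those hyperplanes allows one to invoke the Hopf lemma directly, replacing the cylindrical-rotation symmetry argument used in the sectorial case.
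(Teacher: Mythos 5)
Your proposal is correct and coincides with the paper's own treatment: the paper proves this theorem precisely by declaring the argument identical to that of Theorem~\ref{teo-1}, with every occurrence of $\theta$ replaced by $x_1$ and the cylindrical rotation replaced by the hyperplane reflection $\sigma_\alpha(x_1,z)=(2\alpha-x_1,z)$, exactly as you do. Your added remarks (no need for the manifold $\mathcal M$, even reflection of $u$ across $\Gamma_{\!0}$ and $\Gamma_{\!\beta}$, $u_1\in H^1_0(\Omega_{0\beta})$) are the same bookkeeping the paper leaves implicit.
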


\noindent The argument is identical to the proof of Theorem~\ref{teo-1} found in Section~\ref{proof}, keeping in mind that in the present case the reflection $\sigma_\alpha$ is obviously defined as $\sigma_\alpha(x_1,z) = (2\alpha - x_1, \, z)$, and every occurrence of~$\theta$ has to be replaced with~$x_1$. Since $\gamma_{0\beta}$ and $\Gamma_{\alpha}$ intersects orthogonally we can obtain that $ \partial u / \partial x_1\in H^1(\Omega_{0\beta})$ using \cite[Proposition 6.1]{PacellaTralli} when $\omega$ is smooth.

\section{Examples}\label{exe}
In this section we produce some examples of solutions to \eqref{P} that are not constant in~$\theta$ and satisfy Claim~2 of Theorem \ref{teo-1}. In the first examples $\Omega_{0\beta}$ is a sector of the unit disc $B_1 \subset \R^2$.

\begin{example}
Let $N = 2$ and $\Omega=B_1$. We consider the eigenvalue problem
\begin{equation}\label{eigenvalue}
\left\{\begin{array}{ll}
-\Delta \psi = \lambda \psi \qquad & \text{ in } \Omega_{0\beta}, \\
\noalign{\medskip}
\psi= 0 & \text{ on } \gamma_{0\beta},\\
\noalign{\medskip}
\frac{\partial \psi}{\partial \nu}=0 & \text{ on } \Gamma_{\! 0} \cup \Gamma_{\! \beta}
\end{array} \right.
\end{equation}
for $0<\beta<2\pi$. If we neglect to give the eigenvalues an increasing order, the solutions $(\lambda,\psi)$ of~\eqref{eigenvalue} are given by 
\[\begin{array}{ll}
\psi_{nk}(r,\theta)=J_{s_n} \big(j_{s_n,k} \, r\big) \, \cos(s_n \, \theta)\\
\noalign{\medskip}
\lambda_{nk}=j^2_{s_n,k}
\end{array} 
\]
for $n=0,1,2,\dots$ and $k=1,2,\dots$ where $J_{s_n}$ is the Bessel function of order $s_n = \frac{n\pi}{\beta}$, and $j_{s_n,k}$ is the $k$-th positive zero of $J_{s_n}$. Each $\psi_{nk}$ can be considered as a solution of~\eqref{P} or \eqref{instance} where $f(r,z,u) = \lambda_{nk} \, u$, and the corresponding  linearized operator is $L_{\psi_{nk}} = -\Delta - \lambda_{nk} \, I$. Of course, if $\psi_{nk}$ is the $i$-th eigenfunction of problem~\eqref{eigenvalue} for some positive integer~$i$, then the $i$-th eigenvalue of $L_{\psi_{nk}}$ in $H^1_{\gamma_{0\beta}}(\Omega_{0\beta})$ vanishes. The first eigenvalue of problem~\eqref{eigenvalue} is $\lambda_{01}=j^2_{0,1}$ and the first eigenfunction is radial and given by
\[\psi_{01}(r,\theta)=J_0\big(j_{0,1} \, r\big).\]
Thus, we have an instance of problem~\eqref{P} or \eqref{instance} where $\lambda_1(H_{\gamma_{0\beta}}(\Omega_{0\beta})) = 0$ and a corresponding solution $u = \psi_{01}(r,\theta)$ which is radial, as stated in Theorem \ref{teo-1}, Claim {\em 1}. In order to find the second eigenvalue and eigenfunction of~\eqref{eigenvalue} we have to compare $j_{0,2}\approx 5.5201$ and $j_{\frac{\pi}{\beta},1}$ whose value depends on the angle $\beta$. It can be seen that if $\beta=\frac \pi 2$ then $J_\frac\pi\beta = J_2$ and $j_{\frac{\pi}{\beta},1}=j_{2,1}\approx 5.1356<j_{0,2}$, while if $\beta=\frac \pi 3$ then $J_\frac\pi\beta = J_3$ and $j_{\frac\pi\beta,1}=j_{3,1}\approx 6.3802>j_{0,2}$. Next, using that $j_{s,1}$ is a continuous and strictly increasing function of the variable $s$ (see for instance \cite[pp.~67-68]{elbert}), it follows that there exists a unique angle $\widehat \beta\in (\frac \pi 3,\frac \pi 2)$ such that $j_{\frac\pi{\widehat \beta},1}=j_{0,2}$. Moreover one has $j_{\frac{\pi}\beta,1}<j_{0,2}$ for $\beta > \widehat \beta$ while $j_{\frac\pi\beta,1} > j_{0,2}$ for $0 < \beta < \widehat \beta$. The value of $\widehat \beta$ can be estimated numerically obtaining $\widehat \beta\approx 1.3629$. Reassuming, for $\beta>\widehat \beta$ the second eigenvalue of~\eqref{eigenvalue} is $\lambda_{11}=j^2_{\frac \pi\beta,1}$, and the second eigenfunction
\[\psi_{11}(r,\theta)=J_{\frac\pi\beta}\big(j_{\frac \pi\beta,1} \, r\big) \, \cos \big(\textstyle\frac\pi\beta \, \theta\big)\]
is nonradial and strictly monotone in the angular variable in~$\Omega_{0\beta}$. Let us observe that the second eigenfunction $\psi_{11}$ satisfies the assumptions of Theorem \ref{teo-1}, Claim {\em 2} since the second eigenvalue of the linearized operator $L_{\psi_{11}} = -\Delta - j^2_{\frac\pi{\beta},1} \, I$ in $H^1_{\gamma_{0 \beta}}(\Omega_{0 \beta})$ vanishes. For $\beta < \widehat \beta$ the second eigenvalue of problem~\eqref{eigenvalue} is $\lambda_{02}=j^2_{0,2}$ and the second eigenfunction is radial and given by
\[\psi_{02}(r,\theta)=J_0(j_{0,2} \, r),\]
while for $\beta=\widehat \beta$ the second eigenvalue has multiplicity two and the eigenspace is spanned by $\psi_{02} = J_0(j_{0,2} \, r)$ and $\psi_{\widehat s 1} = J_{\widehat s}\big(j_{\widehat s,1} \, r\big) \, \cos \big(\widehat s \, \theta\big)$, where $\widehat s = \pi/\widehat\beta$. Thus, letting $m = j^2_{0,2} = j^2_{\widehat s,1}$ and $f(r,z,u) = mu$ we obtain an instance of problem~\eqref{P} or \eqref{instance} where $\lambda_2(H^1_{\gamma_{0\widehat \beta}}(\Omega_{0\widehat\beta})) = 0$ and there are both a radial solution and a solution strictly monotone in~$\theta$  thus implying that $\lambda_1(H^1_0(\Omega_{0\widehat\beta})=0$. Since we are in the linear case,  then $L_{\psi_{02}}= L_{\psi_{\widehat s 1}}$ showing that  even if $\frac{\partial \psi_{02}}{\partial \theta}\equiv 0$ the corresponding Dirichlet eigenvalue $\lambda_1(H^1_0(\Omega_{0\widehat\beta}))$ vanishes.

\begin{figure}[h]
  \centering
  \resizebox{7.5cm}{!}{\includegraphics{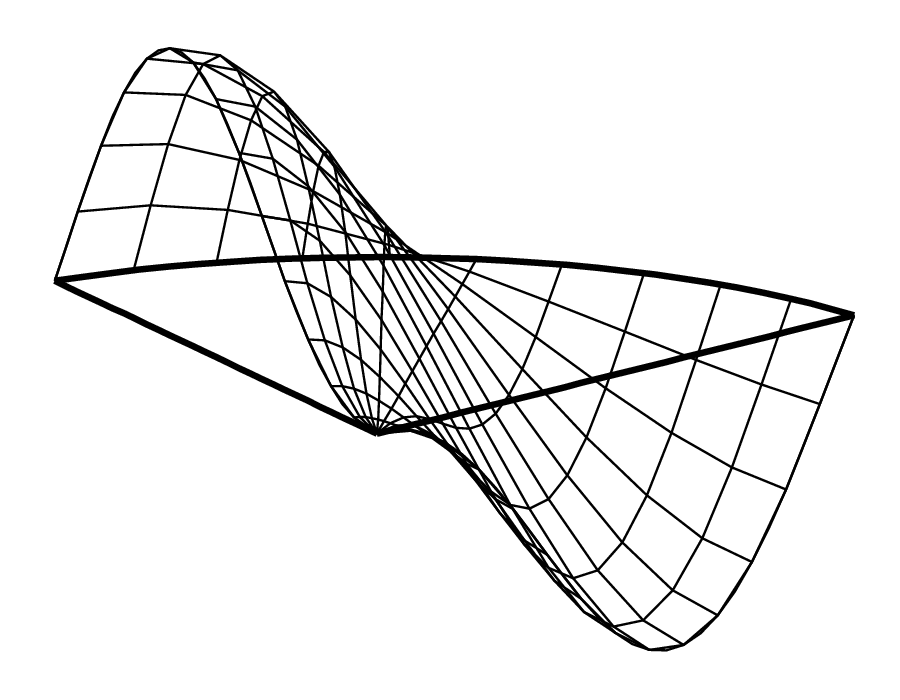}}
\quad
  \resizebox{7.5cm}{!}{\includegraphics{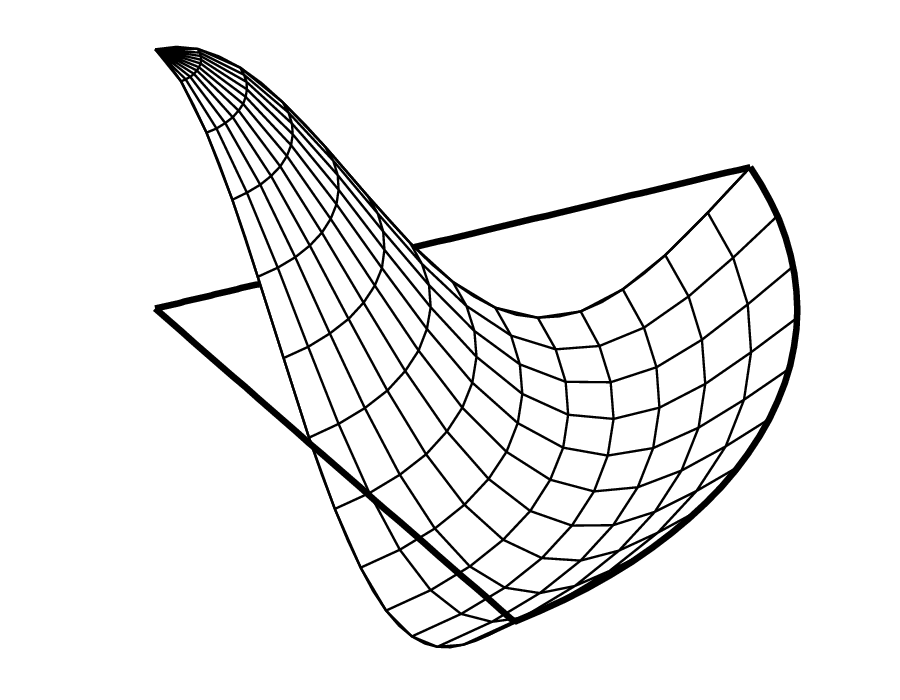}}
  \vspace{-0cm}\caption{A solution monotone in~$\theta$ (left) and a radial solution (right) in $\Omega_{0 \widehat\beta}$ from different viewpoints}
\end{figure}
Next we want to show that, if we allow the solution $u$ in \eqref{P} or in \eqref{instance} to have a higher Morse index, namely $\lambda_2(H^1_{\gamma_0\beta}(\Omega_{0\beta}))<0$ then $u$ can be nonradial without being strictly monotone in $\theta$ in $\Omega_{0\beta}$. Indeed 
if we consider the third eigenvalue and eigenfunction of \eqref{eigenvalue} in the case when $\beta >\widehat \beta$ we have to compare $j_{0,2}\approx 5.5201$ and $j_{\frac{2\pi}{\beta},1}$ (that corresponds to $n=2$) 
and it is easy to see that when $\beta=\pi$ then $j_{\frac{2\pi}{\beta},1}=j_{2,1}\approx 5.1356<j_{0,2}$. As before we can say that there exists $\widetilde \beta >\widehat \beta$ 
such that when $\beta>\widetilde \beta$ the third eigenvalue of~\eqref{eigenvalue} is $\lambda_{2,1}=j^2_{\frac{2\pi}{\beta},1}$ and
the third eigenfunction is 
\[
\psi_{21}(r,\theta)=J_{\frac {2\pi}\beta}\big(j_{\frac {2\pi}\beta,1} \, r\big) \, \cos \big(\textstyle\frac{2\pi}\beta \, \theta\big)
\]
which is symmetric with respect to the bisector of $\Omega_{0\beta}$, strictly monotone with respect to $\theta$ in $\Omega_{0\frac \beta 2}$ and in $\Omega_{\frac \beta 2 \beta}$ and satisfies $\lambda_3(H^1_{\gamma_{0\beta}}(\Omega_{0\beta}))=0$. \\
We conjecture that this behavior should be the only new one when one passes from solutions to \eqref{P} with $\lambda_2(H^1_{\gamma_{0\beta}}(\Omega_{0\beta})) \ge 0$ to solutions with $\lambda_2(H^1_{\gamma_{0\beta}}(\Omega_{0\beta}))<0\leq \lambda_3(H^1_{\gamma_{0\beta}}(\Omega_{0\beta}))$.
\end{example}

\begin{example}
Next we consider a semilinear equation. Let us recall the H\'enon problem
\begin{equation}\label{henon}
\left\{\begin{array}{ll}
-\Delta u = |x|^\alpha \, u^p,\ u > 0 \qquad & \text{ in } B_1, \\
\noalign{\medskip}
u = 0 & \text{ on } \partial B_1,\\
\end{array} \right.
\end{equation}
where $B_1$ is the unit ball in~$\mathbb R^N$, $\alpha>0$ and $p>1$. It has been proved in \cite{SmetsSuWillem} that for any $p>1$ there exists $\alpha^*(p)$ such that for any $\alpha>\alpha^*(p)$ the ground state of~\eqref{henon} is nonradial. When $p>2$, ground states have a symmetry axis (that we can assume, up to a rotation, is the $x_1$-axis) and they are increasing (or decreasing) in the polar angle from this axis, namely in~$\theta$ (see \cite{BartschWethWillem} and \cite{PacellaWeth}). If we call $\bar u$ the restriction to the half ball $\Omega_{0\pi} = B_1 \cap \{\, x_2 > 0 \,\}$ of a ground state~$u$ to~\eqref{henon}, we have that $\bar u$ satisfies 
\begin{equation}\label{??}
\left\{\begin{array}{ll}
-\Delta \bar u = |x|^\alpha \, \bar u^p \qquad & \text{ in } \Omega_{0\pi}, \\
\noalign{\medskip}
\bar u= 0 & \text{ on } \gamma_{0\pi},\\
\noalign{\medskip}
\frac{\partial \bar u}{\partial \nu}=0 & \text{ on } \Gamma_{\! 0}\cup \Gamma_{\! \pi}
\end{array} \right.
\end{equation}
and, when $\alpha$ is large enough, it is strictly monotone in $\theta$. Moreover $\bar u$ satisfies the assumption of Theorem \ref{teo-1}, namely $\lambda_2(H^1_{\gamma_{0\pi}}(\Omega_{0\pi}))\geq  0$. Indeed if $\lambda_2(H^1_{\gamma_{0\pi}}(\Omega_{0\pi}))<0$ then the linearized operator has two negative eigenvalues with corresponding eigenfunctions orthogonal in $L^2(\Omega_{0\pi})$. We can extend the eigenfunctions by symmetry about the hyperplane $\{\, x_2 = 0 \,\}$ thus obtaining two $L^2$-orthogonal eigenfunctions of the linearization to \eqref{henon} associated with negative eigenvalues. This is not possibile since the linearized operator associated to a ground state~$u$ can have only one negative eigenvalue, hence we must have $\lambda_2(H^1_{\gamma_{0\pi}}(\Omega_{0\pi}))\geq  0$. In the special case when $N = 2$, letting $v(\rho,\phi)= c \, \bar u(r,\theta)$ with $c=\big(\frac \pi \beta\big)^\frac2{p-1}$ for $r=\rho^\frac{\pi}\beta$ and $\theta=\frac \pi \beta \, \phi$, $\beta \in (0, 2\pi)$, a straightforward computation shows that $v$ solves
\[
\left\{\begin{array}{ll}
-\Delta v =  
|x|^{(\alpha+2)\frac \pi\beta-2} \, v^p \qquad & \text{ in } \Omega_{0\beta}, \\
\noalign{\medskip}
v= 0 & \text{ on } \gamma_{0\beta},\\
\noalign{\medskip}
\frac{\partial v}{\partial \nu}=0 & \text{ on } \Gamma_{\! 0} \cup \Gamma_{\! \beta}
\end{array} \right.
\]
giving an example of monotone solution in a sector of any amplitude $\beta$. 
This solution satisfies $\lambda_2(H^1_{\gamma_{0\beta}}(\Omega_{0\beta}))\geq  0$. 
Indeed if $\varphi_1(\rho,\phi)$ and $\varphi_2(\rho,\phi)$ are two $L^2(\Omega_{0\beta})$-orthogonal  functions that make negative the quadratic form associated with the linearization to this last equation at the solution $v$, then the functions $\bar \varphi_1(r,\theta)=\varphi_1(\rho,\phi)$ and $\bar \varphi_2(r,\theta)=\varphi_2(\rho,\phi)$ are two $L^2(\Omega_{0\pi})$-orthogonal  functions that makes negative the quadratic form associated with the linearization to \eqref{??} at the solution $\bar u$ and this is not possible.
\end{example}

\begin{example}
Here we consider the case of an annulus $A$ of $\R^2$. In \cite[Proposition 1.4]{Gladiali1} it has been proved that for any $k=1,2,\dots$, there exists an exponent $p_k>1$ such that the least energy solution of the Lane-Emden problem 
\[
\left\{\begin{array}{ll}
-\Delta u =  u^p \qquad & \text{ in } A, \\
\noalign{\medskip}
u= 0 & \text{ on } \partial A
\end{array} \right.
\]
in the space of $H^1$ functions that are $\frac {2\pi}k$ periodic in $\theta$ is nonradial when $p>p_k$. Moreover, by \cite{Gladiali2}, up to a rotation, these solutions are strictly monotone in $\theta$ in the sector $\Omega_{0\frac \pi k}$ and, by symmetry reasons, they satisfy a Neumann boundary condition on $\Gamma_{\! 0} \cup \Gamma_{\! \frac \pi k}$. Finally, since they are least energy solutions, they satisfy $\lambda_2(H^1_{\gamma_{0\frac \pi k}}(\Omega_{0\frac \pi k}))\geq  0$.
\end{example}

\section{Symmetry and monotonicity in unbounded domains}\label{unbounded}

Hereafter we consider the case when $\Omega_{0\beta}$ is not bounded. To keep the presentation simple and to deal with some difficulties coming from the unboudeness of the domain, we focus on the special case when the domain of the problem is a sector of the $N$-dimensional Euclidean space with an angular amplitude $\beta \in (0,2\pi)$. More generally, we also consider the set difference of such a sector and the infinite cylinder $d(x) \le r_1$, where $d(x)=\sqrt{x_1^2+x_2^2}$. To be precise, choosing $r_1 \in [0,+\infty)$ and $\theta_1,\theta_2 \in [0,2\pi)$, with $\theta_1 < \theta_2$, we denote by $\Omega_{\theta_1\theta_2}$ the domain
$$
\Omega_{\theta_1\theta_2}
=
\{\, x \in \R^N
:
d(x) > r_1,\ \theta(x) \in (\theta_1,\theta_2) \,\}
$$
whose boundary contains the open, flat subsets $\Gamma_{\!\theta_i} = \{\, x \in \R^N : d(x) > r_1,\ \theta(x) = \theta_i \,\}$, $i =1,2$. Moreover when $r_1>0$ we may write $\partial \Omega_{\theta_1\theta_2} =\Gamma_{\! \theta_1}\cup \Gamma_{\! \theta_2} \cup \gamma_{\theta_1\theta_2}$ where $\gamma_{\theta_1\theta_2}$
is the closed subset
$$
\gamma_{\theta_1\theta_2} := \{\,
x \in \R^N : d(x)=r_1,\ \theta(x) \in [\theta_1,\theta_2] \,\}.
$$
When $r_1=0$, instead, we have $\partial \Omega_{\theta_1\theta_2} =\Gamma_{\! \theta_1}\cup \Gamma_{\! \theta_2} \cup \Upsilon$ and $\gamma_{\theta_1\theta_2} = \emptyset$. In this part of the paper we deal with problem~\eqref{P} where the domain $\Omega_{0\beta}$ is unbounded and defined as above. The function $f(r,z,u)$ is subject to the same conditions stated in the Introduction. Let $\cal Y$ be the set of all functions $u \in C^2(\Omega_{0\beta}) \cap C^0(\overline \Omega_{0\beta})$ such that $u_\theta\in C^0(\overline \Omega_{0\beta}\setminus \Upsilon)$ and having a square-summable gradient $\nabla u \in L^2(\Omega_{0\beta},\mathbb R^N)$. Here we investigate functions $u \in \cal Y$ satisfying~\eqref{P} pointwise and having a small Morse index, whose definition is recalled below. By standard regularity theory, any solution $u \in \cal Y$ belongs to $ C^2(\Omega_{0\beta}\cup \Gamma_{\! 0} \cup \Gamma_{\! \beta})$. As mentioned in the Introduction, problem~\eqref{P} can be rewritten as~\eqref{instance}. Of course, if $u \in \cal Y$ then for every $R > r_1$ we have $u \in H^1(\Omega_{0\beta} \cap B_R)$, where $B_R \subset \R^N$ denotes the ball with radius~$R$ centered at the origin. Now we do not assume that $u_\theta$ belongs to $H^1(\Omega_{0\beta} \cap B_R)$, but we require $u_\theta$ to extend continuously up to the set $d(x) = r_1$ when $r_1 > 0$. Roughly speaking, the reason is that in order to derive a conclusion on~$u_\theta$ from a bound on the Morse index (see below) we have to approximate $u_\theta$ with some compactly supported function: hence $u_\theta$ must vanish on $\gamma_{0\beta}$ either in the sense of traces or as a continuous function. In this part of the paper we assume we are in the second case: the assumption is used, for instance, in the proof of~\eqref{convergence}. 
To proceed further, for any subset $D \subset \overline \Omega_{0\beta}$ we denote by $C^1_0(D)$ the set of functions $\varphi \in C^1(D)$ having a compact support contained in $D$.

\begin{definition}
We say that a solution $u$ to \eqref{P} 
\begin{itemize} 
\item[-] is stable, and has Morse index $m(u) = 0$, if $Q_{\Omega_{0\beta}}(\varphi)\geq 0$ for every  $\varphi \in C^1_{0}(\Omega_{0\beta}\cup \Gamma_{\! 0} \cup \Gamma_{\! \beta})$.
\item[-] has Morse index $m(u) = M\geq 1$ if $M$ is the maximal dimension of a vector subspace~$X \subset C^1_{0}(\Omega_{0\beta}\cup \Gamma_{\! 0} \cup \Gamma_{\! \beta})$ such that $Q_{\Omega_{0\beta}}(\varphi)< 0$ for every $\varphi
\in X\setminus\{0\}$.
\end{itemize}
\end{definition}
\noindent We will investigate solutions with $m(u) \le 1$. By definition, if $m(u) = 0$ then the bilinear form
\begin{equation}\label{bilinear}
\B(\varphi,\psi):=\int_{\Omega_{0\beta}} \Big( \nabla \varphi \, \nabla \psi-f'(d(x),z,u) \, \varphi \, \psi \Big) \, dx
\end{equation}
is positive semidefinite in $C^1_{0}(\Omega_{0\beta}\cup \Gamma_{\! 0} \cup \Gamma_{\! \beta}) \times C^1_{0}( \Omega_{0\beta}\cup \Gamma_{\! 0} \cup \Gamma_{\! \beta})$, and therefore the Cauchy-Schwarz inequality holds:
\begin{equation}\label{cs0}
0 \le \big( \B(\varphi,\psi) \big)^2
\leq
Q_{\Omega_{0\beta}}(\varphi) \, Q_{\Omega_{0\beta}}(\psi)
.
\end{equation}
Since $u$ is locally bounded, so is $f'(d(x),z,u)$ and the definition of $\B(\varphi,\psi)$ extends to $H^1_{\gamma_{0\beta} \cup \partial B_R}(D) \times H^1_{\gamma_{0\beta} \cup \partial B_R}(D)$, where $D = \Omega_{0\beta} \cap B_R$, $R > r_1$, and $H^1_{\gamma_{0\beta} \cup \partial B_R}(D)$ denotes the completion of $C^1_0(\left(\Omega_{0\beta}\cup \Gamma_0\cup\Gamma_\beta\right)\cap B_R)$ in the $H^1$-norm. Of course, positive semidefiniteness is preserved under such an extension.

\begin{remark}
The integral in~\eqref{bilinear} makes sense even if we replace $\varphi$ with some function $v$ having an unbounded support, because the only contribution comes from the bounded set $\mathop{\rm supp} \psi$: this will often be done in the sequel in order to prove that $\B(v,\psi) = 0$ for all $\psi \in C^\infty_0(D)$, i.e., $v$ is locally a weak solution of $L_u \, v = 0$. In such a case, however, $Q_{\Omega_{0\beta}}(v)$ is undefined, in general, and \eqref{cs0} makes no sense: this difficulty is overcome by means of a truncation technique inspired by~\cite{gladiali-pacella-weth} (see Section~\ref{properties} for details).
\end{remark}

\begin{remark}\label{stability}
If $m(u) = 1$ then there exists a one-dimensional subspace $X = \{\, \lambda\varphi_1 : \lambda \in \mathbb R \,\} \subset C^1_{0}(\Omega_{0\beta}\cup \Gamma_{\! 0} \cup \Gamma_{\! \beta})
$ such that $Q_{\Omega_{0\beta}}(\lambda\varphi_1)< 0$ for every $\lambda \ne 0$. In such a case, we have
$$
\mbox{$Q_{\Omega_{0\beta}}(\varphi) \ge 0$ for every $\varphi \in C^1_{0}(\Omega_{0\beta}\cup \Gamma_{\! 0} \cup \Gamma_{\! \beta})$ such that $\mathop{\rm supp} \varphi \cap \mathop{\rm supp} \varphi_1 = \emptyset$.}
$$
Indeed, if there were $\varphi_2 \in C^1_{0}(\Omega_{0\beta}\cup \Gamma_{\! 0} \cup \Gamma_{\! \beta})$ supported in  $\Omega_{0\beta}\cup \Gamma_{\! 0} \cup \Gamma_{\! \beta} \setminus \mathop{\rm supp} \varphi_1$
 and such that $Q_{\Omega_{0\beta}}(\varphi_2) < 0$, we would immediately construct the two dimensional subspace $W = \{\, a \varphi_1 + b \varphi_2 : a,b \in \mathbb R \,\} \subset C^1_{0}(\Omega_{0\beta}\cup \Gamma_{\! 0} \cup \Gamma_{\! \beta})$ such that $Q_{\Omega_{0\beta}}(\varphi) < 0$ for every $\varphi \in W \setminus \{0\}$, thus contradicting $m(u) = 1$. The argument extends to the more general case when $m(u) < +\infty$: see, for instance, \cite[Remark~2.7]{gladiali-pacella-weth}.

\end{remark}

The main result in this part of the paper is the extension of Theorem \ref{teo-1} to the case of the unbounded domain $\Omega_{0\beta}$ defined just before. In particular we will prove the following:
\begin{theorem}\label{teo-2}
Assume $u \in \cal Y$ is a solution to~\eqref{P} with Morse index $m(u)\leq 1$. Assume further $f(r,z,u)$ is convex with respect to~$u$.
\begin{enumerate}
\item If\/ $m(u) = 0$ then $u$ is constant with respect to the angular variable\/~$\theta$.
\item If\/ $m(u) = 1$ then $u$ is either independent of\/~$\theta$ or strictly monotone with respect to\/~$\theta$ in\/ $\Omega_{0\beta}$, in which case $u$ has a nonvanishing derivative~$u_\theta$ and $\inf _{\psi \in C^1_{0}(\Omega_{0\beta})}
Q_{\Omega_{0\beta}}(\psi)= 0$.
\end{enumerate}
\end{theorem}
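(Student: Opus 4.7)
The plan is to retrace the six-part rotating-plane argument of Section~\ref{proof}, replacing every appeal to eigenvalues on $H^1_{\gamma_{0\beta}}(\Omega_{0\beta})$ by a direct statement about the quadratic form $Q$ on compactly supported test functions, and dealing with the unbounded support of $u_\theta$ and $w_\alpha$ by a cutoff. The first step is to record the natural Morse-index analog of Lemma~\ref{lem-1} and Corollary~\ref{corollary}: if $m(u)\le 1$ then, for every $\alpha\in(0,\beta)$, at least one of the two sectors $\Omega_{0\alpha}$ or $\Omega_{\alpha\beta}$ carries a positive semidefinite $Q$ when tested against $C^1_0$ functions vanishing on $\Gamma_{\!\alpha}$. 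Indeed, were both sides to contain a compactly supported test function with $Q<0$, extension by zero would produce two elements of $C^1_0(\Omega_{0\beta}\cup\Gamma_{\! 0}\cup\Gamma_{\!\beta})$ with disjoint supports and $Q<0$, hence a two-dimensional negative subspace, contradicting $m(u)\le 1$ as in Remark~\ref{stability}. Under $m(u)=0$ both sides are automatically semidefinite.

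Next I would adapt Lemma~\ref{sign}. On a side of $\Gamma_{\!\alpha}$ where $Q\ge 0$, given a bounded $v\in C^1$ with $L_u v\ge 0$ in the weak sense, $v\ge 0$ on $\gamma_{0\alpha}\cup\Gamma_{\!\alpha}$, and the boundary data on $\Gamma_{\! 0}$ consistent with the problem, insert $\varphi=-\eta_R^2\, v^-$ into the weak inequality for a standard cutoff $\eta_R$ supported in $B_{2R}$ with $\eta_R=1$ on $B_R$. Standard manipulations yield $Q(\eta_R v^-)\le C\!\int (v^-)^2|\nabla\eta_R|^2\,dx$, which tends to zero because $v$ is locally bounded and $\nabla u\in L^2$ gives the required integrability near infinity. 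Combined with positive semidefiniteness of $Q$, this forces $v^-\equiv 0$, and then the strong maximum principle and the Hopf lemma upgrade non-negativity to strict positivity or identical vanishing. With these pieces the start $\alpha=\frac\beta 2$ gives, by oddness of $w_{\frac\beta 2}$ and convexity of $f$, either $w_{\frac\beta 2}\equiv 0$ or $w_{\frac\beta 2}>0$ on one of the two subsectors.

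In the symmetric case $w_{\frac\beta 2}\equiv 0$, the function $u_\theta$ is antisymmetric about $\Gamma_{\!\frac\beta 2}$; testing the weak form of $L_u u_\theta=0$ against $u_\theta\,\eta_R$ gives $Q(u_\theta\eta_R)=\int u_\theta^2|\nabla\eta_R|^2\to 0$, and the Morse-index bound together with oddness forces $u_\theta$ to vanish identically, on pain of producing—after extension by zero across $\Gamma_{\!\frac\beta 2}$ and then across $\Gamma_{\! 0}$—a nontrivial weak solution of $L_u\tilde u_\theta=0$ vanishing on an open set, contradicting unique continuation. In the monotone case the six parts of the proof of Theorem~\ref{teo-1} carry over verbatim: the weak maximum principle in small domains applies uniformly in $\alpha$ because $c_\alpha\in L^\infty_{loc}$ is uniformly bounded (using that $u$ is locally bounded), and bootstrapping yields $u_\theta>0$ throughout $\Omega_{0\beta}$.

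The hard part is the concluding dichotomy between Cases~1 and~2. For Case~2, testing the identity $L_u u_\theta=0$ against $u_\theta\eta_R^2$ gives $Q(u_\theta\eta_R)=\int u_\theta^2|\nabla\eta_R|^2$, whose infimum over $R$ is zero; together with $u_\theta>0$, the Cauchy--Schwarz bound~\eqref{cs0} applied on the semidefinite sectors coming from the splitting step, and Remark~\ref{stability}, this yields $\inf\{Q(\psi):\psi\in C^1_0(\Omega_{0\beta})\}=0$, as claimed. For Case~1 I must rule out strict monotonicity; the replacement for the Dirichlet--Neumann eigenvalue comparison used at the end of the proof of Theorem~\ref{teo-1} is to exploit that $m(u)=0$ makes $\B$ positive semidefinite on all of $C^1_0(\Omega_{0\beta}\cup\Gamma_{\! 0}\cup\Gamma_{\!\beta})$, so that the Cauchy--Schwarz bound $|\B(\eta_R u_\theta,\psi)|^2\le Q(\eta_R u_\theta)\,Q(\psi)$, combined with $Q(\eta_R u_\theta)\to 0$ and the boundary identity $\B(u_\theta,\psi)=\int_{\Gamma_{\! 0}\cup\Gamma_{\!\beta}}\psi\,\partial_\nu u_\theta$ obtained from $L_u u_\theta=0$ by integration by parts, forces $\partial_\nu u_\theta=0$ on $\Gamma_{\! 0}\cup\Gamma_{\!\beta}$. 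The extension~\eqref{extension} then produces a weak $C^1$ solution of $L_u\tilde u_\theta=0$ in $\Omega_{-\beta,\beta}$ vanishing on $\Omega_{-\beta,0}$, contradicting unique continuation. The main technical obstacle throughout is precisely this justification of truncated limits and boundary passages in the non-$L^2$ setting imposed by the class $\cal Y$.
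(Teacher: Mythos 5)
Your overall architecture matches the paper's: the Morse-index splitting across $\Gamma_{\!\alpha}$, the truncated Cauchy--Schwarz argument, and the Case-1 conclusion via $\partial u_\theta/\partial\nu=0$ on $\Gamma_{\!0}\cup\Gamma_{\!\beta}$ plus unique continuation are all essentially the paper's steps. But two of your key claims do not hold as stated. First, you assert that $\int (v^-)^2|\nabla\eta_R|^2\,dx\to 0$ ``because $v$ is locally bounded and $\nabla u\in L^2$.'' For a merely bounded $v$ this integral scales like $R^{N-2}$ over the annulus $B_{2R}\setminus B_R$ and does not vanish when $N\ge 3$. The paper's Lemma~\ref{vanishes} supplies the missing ingredient: a Poincar\'e inequality on the spherical cross-sections $\Sigma_{\alpha R}$, valid precisely because $v\in E_\alpha$ vanishes on $\Gamma_{\!\alpha}$, whose first eigenvalue scales like $R^{-2}$; combined with $\nabla v\in L^2(\Omega_{0\alpha})$ this yields the convergence of $\int_{R_0}^{+\infty} R^{-2}\bigl(\int_{\Sigma_{\alpha R}}v^2\,d\Sigma\bigr)dR$ and hence the vanishing of the truncation error. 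The paper even exhibits a $v\in E$ not vanishing on $\Gamma_{\!\alpha}$ for which the corresponding boundary terms blow up when $N\ge 3$, so this is not a removable technicality.

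Second, Part~VI does not carry over verbatim. The weak maximum principle in small domains needs $|D_\alpha|<\delta$, but in the unbounded sector $\Omega_{0\alpha}\setminus K$ has infinite measure for every compact $K$ (and $c_\alpha$ need not be globally bounded, since functions in $\cal Y$ may be unbounded). The paper explicitly flags this as the hardest point and replaces it by a different mechanism: choose a torus $T$, invariant under cylindrical rotations, containing $\mathop{\rm supp}\varphi_1$ (the negative direction when $m(u)=1$); prove by a compactness and Hopf-lemma argument that $w_\alpha>0$ on $(\Omega_{0\alpha}\cup\Gamma_{\!0})\cap\overline T$ for $\alpha$ slightly above $\tilde\alpha$; and then invoke the unbounded sign-preservation lemma with $K=\overline T$ --- legitimate because $Q$ is positive semidefinite on test functions supported away from $\mathop{\rm supp}\varphi_1$ by Remark~\ref{stability} --- to propagate positivity to all of $\Omega_{0\alpha}$. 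Finally, you acknowledge but do not resolve the fact that $\nabla u_\theta$ need not be square-summable near $d(x)=r_1$; the paper's double truncation $\zeta_\delta\,\xi_R$ of Proposition~\ref{small}, which exploits the assumed continuity of $u_\theta$ up to $\gamma_{0\beta}$, is what licenses the integrations by parts you perform with $\eta_R$ alone.
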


The rest of the paper is devoted to prove Theorem \ref{teo-2}. First, in Section \ref{se:7} we introduce a suitable function space to which the solutions belong and we prove that some sequences of boundary integrals should converge to zero, which is useful 
to manage the boundary term coming from an integration by parts of the
function $u_\theta$ or the function $w_\alpha$ over a bounded subset
of $\Omega_{0\beta}$.
In Section \ref{se:8} we prove a sign preservation property, in Section \ref{properties} we extend the splitting lemma and in Section \ref{se:10} we conclude the proof. \\
We remark that the most difficult point to treat is the extension of Part VI of the proof of Theorem \ref{teo-1}. Indeed we cannot use the maximum principle in small domains and we need to introduce a suitable torus $T$, invariant under cylindrical rotations, to overcome this difficulty. 

\section{A suitable function space}\label{se:7}

As mentioned before, we consider (possibly unbounded) solutions to \eqref{P} belonging (in particular) to the set $\fspace = \{\, v \in C^2(\Omega_{0\beta}) \cap C^1(\Omega_{0\beta} \cup \Gamma_{\! 0} \cup \Gamma_{\! \beta}) \cap C^0(\overline \Omega_{0\beta}) : \nabla v \in L^2(\Omega_{0\beta},\mathbb R^N) \,\}$. We collect here some properties of the space $\fspace$ and its subspaces $\fspace_\alpha$, where for $\alpha \in (0,\beta]$, we denote by~$\fspace_\alpha$ the subset of all $v \in \fspace$ such that $v = 0$ on~$\Gamma_{\!\alpha}$. Note that the function $w_\frac\beta2$ associated to a solution $u \in \fspace$ obviously belongs to~$\fspace_\frac\beta2$. For $\alpha \in (0,\beta]$ we let  $\Sigma_{\alpha R}$ be the surface $\Omega_{0\alpha} \cap \partial B_R$, where $R > r_1$. By Tonelli's theorem, for every $v \in E$ and almost every $R > r_1$ we have
$$
 \int_{\Sigma_{\beta R}}
|\nabla v|^2 \, d\Sigma
<
+\infty
.
$$
Of course, if $N = 2$ then $\Sigma_{\beta R}$ reduces to an arc, 
$v \in C^1(\overline \Sigma_{\beta R})$ and the integral above is finite for every $R > r_1$. 
\begin{lemma}
Let $\alpha \in (0,\beta]$, $\Sigma_{\alpha R}=\Omega_{0\alpha} \cap \partial B_R$ and $\nu $ be the outward normal to the sphere~$\partial B_R$. For every $v \in \fspace_\alpha$ and almost every $R > r_1$ the following inequalities hold:
\begin{align}
\label{estimate}
\sqrt{\lambda_1(r_1,R) \,}
\,
\left|
\int_{ \Sigma_{\alpha R}}
v \, \frac{\, \partial v \,}{\partial \nu}
\, d\Sigma
\right|
&\le
\int_{ \Sigma_{\alpha R}}
|\nabla v|^2 \, d\Sigma
,
\\
\noalign{\medskip}
\label{estimate2}
\lambda_1(r_1,R)
\,
\int_{\Sigma_{\alpha R}} v^2 \, d\Sigma
&\le
\int_{ \Sigma_{\alpha R}}
|\nabla v|^2 \, d\Sigma
,
\end{align}
where $\lambda_1(r_1,R) > 0$ is given by
$$
\lambda_1(r_1,R)
=
\inf_{\genfrac{}{}{0pt}{1}{w \in H^1( \Sigma_{\alpha R}) \setminus \{0\}}{w = 0 \mbox{ \scriptsize on } \Gamma_{\!\alpha} \cap \partial B_R}}
\frac{\, \int_{\Sigma_{\alpha R}} |\nabla_{\!\tau\,} w|^2 \, d\Sigma \,}
{\int_{ \Sigma_{\alpha R}} w^2 \, d\Sigma}
$$
and\/ $\nabla_{\!\tau\,} w$ denotes the intrinsic gradient of~$w$ over the surface $\Sigma_{\alpha R}$.
\end{lemma}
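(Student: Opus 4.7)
The plan is to deduce both inequalities from the standard Poincar\'e inequality on the spherical cap $\Sigma_{\alpha R}$ combined with Cauchy--Schwarz, so the bulk of the work is to check that $v|_{\Sigma_{\alpha R}}$ is an admissible function in the variational characterization of $\lambda_1(r_1,R)$ for almost every~$R$.

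\textbf{Step 1: admissibility of the restriction.} I would first show that for almost every $R>r_1$ the trace $v|_{\Sigma_{\alpha R}}$ belongs to the space that defines $\lambda_1(r_1,R)$. Since $v\in \fspace_\alpha$, the function~$v$ is continuous on $\overline{\Omega_{0\beta}}$, hence bounded on the compact set $\overline{\Sigma_{\alpha R}}$ and in particular $v|_{\Sigma_{\alpha R}}\in L^2(\Sigma_{\alpha R})$. On the other hand, $\nabla v\in L^2(\Omega_{0\beta},\mathbb{R}^N)$ together with Tonelli's theorem (applied in polar coordinates on $\Omega_{0\alpha}$) ensures that for almost every $R>r_1$ the restriction $\nabla v|_{\Sigma_{\alpha R}}\in L^2(\Sigma_{\alpha R},\mathbb{R}^N)$; the pointwise inequality $|\nabla_\tau v|\le|\nabla v|$ then gives $\nabla_\tau v\in L^2(\Sigma_{\alpha R})$. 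Thus $v|_{\Sigma_{\alpha R}}\in H^1(\Sigma_{\alpha R})$ for almost every such~$R$, and continuity of $v$ together with the condition $v=0$ on $\Gamma_{\!\alpha}$ shows that its trace vanishes on $\Gamma_{\!\alpha}\cap\partial B_R$.

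\textbf{Step 2: the second inequality.} Once admissibility is granted, the definition of $\lambda_1(r_1,R)$ directly yields
\[
\lambda_1(r_1,R)\int_{\Sigma_{\alpha R}}v^2\,d\Sigma
\le \int_{\Sigma_{\alpha R}}|\nabla_\tau v|^2\,d\Sigma
\le \int_{\Sigma_{\alpha R}}|\nabla v|^2\,d\Sigma,
\]
which is \eqref{estimate2}. The strict positivity $\lambda_1(r_1,R)>0$ follows from the standard Poincar\'e--Friedrichs inequality on the compact $(N-1)$-manifold with boundary $\overline{\Sigma_{\alpha R}}$, whose ``Dirichlet'' portion $\Gamma_{\!\alpha}\cap\partial B_R$ has positive $(N-2)$-Hausdorff measure (being a spherical arc if $N=2$, and a lower-dimensional spherical cap if $N\ge 3$).

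\textbf{Step 3: the first inequality.} Applying Cauchy--Schwarz on $\Sigma_{\alpha R}$ and decomposing the gradient as $|\nabla v|^2=(\partial v/\partial\nu)^2+|\nabla_\tau v|^2$, I obtain
\[
\left|\int_{\Sigma_{\alpha R}} v\,\frac{\partial v}{\partial\nu}\,d\Sigma\right|
\le
\Bigl(\int_{\Sigma_{\alpha R}}v^2\,d\Sigma\Bigr)^{\!1/2}
\Bigl(\int_{\Sigma_{\alpha R}}\bigl(\tfrac{\partial v}{\partial\nu}\bigr)^{2}d\Sigma\Bigr)^{\!1/2}
\le
\Bigl(\int_{\Sigma_{\alpha R}}v^2\,d\Sigma\Bigr)^{\!1/2}
\Bigl(\int_{\Sigma_{\alpha R}}|\nabla v|^2\,d\Sigma\Bigr)^{\!1/2}.
\]
Plugging \eqref{estimate2} into the first factor and multiplying through by $\sqrt{\lambda_1(r_1,R)}$ produces exactly \eqref{estimate}.

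The only non-routine point is the measurability/Fubini argument of Step~1 that gives $\nabla v|_{\Sigma_{\alpha R}}\in L^2$ for almost every~$R$; everything else is Cauchy--Schwarz and the spectral definition of $\lambda_1(r_1,R)$.
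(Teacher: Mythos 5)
Your proposal is correct and follows essentially the same route as the paper: Cauchy--Schwarz on $\Sigma_{\alpha R}$, the variational characterization of $\lambda_1(r_1,R)$ applied to the trace of $v$ (which vanishes on $\Gamma_{\!\alpha}\cap\partial B_R$), and the pointwise decomposition $|\nabla v|^2=|\nabla_\tau v|^2+(\partial v/\partial\nu)^2$, combined in the same way to yield \eqref{estimate} from \eqref{estimate2}. The admissibility check in your Step~1 is the Tonelli observation the paper records just before stating the lemma, so nothing is missing.
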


\begin{proof}
By the Cauchy-Schwarz inequality, for almost every $R > r_1$ we may write
\begin{equation}\label{Cauchy-Schwarz}
\left|
\int_{ \Sigma_{\alpha R}}
v \, \frac{\, \partial v \,}{\partial \nu}
\, d\Sigma
\right|
\le
\left(
\int_{ \Sigma_{\alpha R}} v^2 \, d\Sigma
\right)^{\!\frac12}
\left(
\int_{ \Sigma_{\alpha R}}
\Big(\frac{\, \partial v \,}{\, \partial \nu \,}\Big)^{\! 2} \, d\Sigma
\right)^{\!\frac12}
\!\!.
\end{equation}
By Poincar\'e's inequality, and recalling that $\nabla v = \nabla_{\!\tau\,} v + \frac{\, \partial v \,}{\, \partial \nu \,} \, \nu$, the first integral in the right-hand side is estimated as follows:
\begin{align*}
\lambda_1(r_1,R)
\,
\int_{ \Sigma_{\alpha R}} v^2 \, d\Sigma
&\le
\int_{ \Sigma_{\alpha R}}
|\nabla_{\!\tau\,} v|^2 \, d\Sigma
\\
\noalign{\medskip}
&\le
\int_{ \Sigma_{\alpha R}}
|\nabla v|^2 \, d\Sigma
,
\end{align*}
which yields~\eqref{estimate2}. Furthermore, we obviously have
$$
\int_{ \Sigma_{\alpha R}}
\Big(\frac{\, \partial v \,}{\, \partial \nu \,}\Big)^{\! 2} \, d\Sigma
\le
\int_{\Sigma_{\alpha R}}
|\nabla v|^2 \, d\Sigma
$$
and~\eqref{estimate} follows by multiplying the last two estimates.
\end{proof}

Inequalities~\eqref{estimate}-\eqref{estimate2} enter in the proof of the following result:

\begin{lemma}\label{vanishes}
Let the domain\/ $\Omega_{0\beta}$ and the function spaces $\fspace$ and $\fspace_\alpha$ be defined as above.
\begin{enumerate}
\item\label{C1} For every $v \in \fspace_\alpha$ and $R_0 > r_1$, the iterated integral
$$
\int_{R_0}^{+\infty}
\bigg(
\frac1{\, R^2 \,}
\int_{ \Sigma_{\alpha R}}
v^2 \, d\Sigma
\bigg)
dR
$$
converges to a finite value, hence it is infinitesimal as $R_0 \to +\infty$.
\item For every $v \in \fspace_\alpha$ there exists a sequence $R_k \to +\infty$ such that
\begin{equation}\label{limit}
\lim_{k \to +\infty}
\int_{ \Sigma_{\alpha R_k}}
v \, \frac{\, \partial v \,}{\partial \nu}
\, d\Sigma
=
0.
\end{equation}
\item For every $v \in \fspace$ such that $v_\theta \in C^1(\overline \Omega_{0\alpha})$ there exists a sequence $R_k \to +\infty$ such that the negative part $v_\theta^- = \min\{\, 0,v_\theta \,\} \le 0$ satisfies
\begin{equation}\label{v_theta}
\lim_{k \to +\infty}
\int_{ \Sigma_{\alpha R_k}}
v_\theta^- \, \frac{\, \partial v_\theta^- \,}{\partial \nu}
\, d\Sigma
=
0.
\end{equation}
\end{enumerate}
\end{lemma}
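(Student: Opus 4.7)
The plan is to prove the three claims in order, all relying on a common scaling estimate for $\lambda_1(r_1,R)$. The change of variables $y = x/R$ maps $\Sigma_{\alpha R}$ onto a subset of the unit sphere $S^{N-1}$ whose Dirichlet part $\Gamma_{\!\alpha}\cap\partial B_1$ is a fixed $(N-2)$-dimensional set of positive measure, while the inner constraint $d(\omega) > r_1/R$ loosens as $R \to \infty$. Thus $R^2\lambda_1(r_1,R)$ is the first eigenvalue of the rescaled mixed problem; this quantity converges as $R\to\infty$ to the positive first eigenvalue of the limiting problem on the full spherical lune, so there exists $c > 0$ with $\lambda_1(r_1,R) \ge c/R^2$ for all $R$ sufficiently large.

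For Claim 1, the defining Poincaré inequality of $\lambda_1(r_1,R)$, together with $|\nabla_{\!\tau}v|\le|\nabla v|$, gives $\int_{\Sigma_{\alpha R}}v^2 \le \lambda_1(r_1,R)^{-1}\int_{\Sigma_{\alpha R}}|\nabla v|^2$; using $\lambda_1(r_1,R)\ge c/R^2$ yields $R^{-2}\int_{\Sigma_{\alpha R}}v^2 \le c^{-1}\int_{\Sigma_{\alpha R}}|\nabla v|^2$ for $R$ large. Integrating in $R\in(R_0,+\infty)$ and invoking the coarea formula,
$$
\int_{R_0}^{+\infty}\frac{1}{R^2}\int_{\Sigma_{\alpha R}}v^2\,d\Sigma\,dR \le \frac{1}{c}\int_{\Omega_{0\alpha}\setminus B_{R_0}}|\nabla v|^2\,dx < +\infty,
$$
because $\nabla v\in L^2(\Omega_{0\beta})$; the tail vanishes as $R_0\to+\infty$ by absolute continuity of the integral.

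For Claim 2, let $g(R)=\int_{\Sigma_{\alpha R}}|\nabla v|^2\,d\Sigma$: coarea ensures $g\in L^1((r_1,+\infty))$. If $\liminf_{R\to+\infty}R\,g(R) > 0$, then $g(R)\ge c_0/R$ eventually and $\int g = +\infty$, a contradiction. Therefore there is a sequence $R_k\to+\infty$ in the full-measure set where \eqref{estimate} holds with $R_kg(R_k)\to 0$, and plugging into \eqref{estimate} gives
$$
\left|\int_{\Sigma_{\alpha R_k}} v\,\frac{\partial v}{\partial\nu}\,d\Sigma\right| \le \lambda_1(r_1,R_k)^{-1/2}g(R_k) \le \frac{R_k}{\sqrt c}\,g(R_k) \to 0.
$$

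Claim 3 is the principal obstacle, because $v_\theta^-$ need not vanish on $\Gamma_{\!\alpha}$ and we have no a priori $L^2$ bound on $\nabla v_\theta$. The pointwise identity $v_\theta = x_1 v_{x_2} - x_2 v_{x_1}$ gives $|v_\theta^-| \le r\,|\nabla v|$, hence on $\Sigma_{\alpha R}$ (where $r\le R$) the slice bound $\int_{\Sigma_{\alpha R}}(v_\theta^-)^2\,d\Sigma \le R^2 g(R)$ holds, and the argument of Claim 2 applied to $\Psi(R):=\int_{\Sigma_{\alpha R}}(v_\theta^-)^2\,d\Sigma$ yields a sequence $R_k\to+\infty$ with $\Psi(R_k)/R_k\to 0$. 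Differentiating the representation $\Psi(R)=R^{N-1}\int(v_\theta^-(R\omega))^2\,d\omega$ in $R$ produces
$$
2\int_{\Sigma_{\alpha R}} v_\theta^- \, \frac{\partial v_\theta^-}{\partial\nu}\,d\Sigma = \Psi'(R) - \frac{N-1}{R}\,\Psi(R),
$$
in which the second summand is already $o(1)$ along the above sequence. The delicate remaining step is to refine $R_k$ so that also $\Psi'(R_k)\to 0$: this I plan to achieve by a further averaging-in-$R$ argument exploiting $\Psi/R^2\in L^1$ together with the absolute continuity of $\Psi$ (guaranteed by $v_\theta\in C^1$), selecting $R_k$ at Lebesgue points of $\Psi'$ where the mean oscillation of $\Psi$ on shrinking intervals decays to zero.
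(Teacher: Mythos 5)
Your Claims 1 and 2 are correct and follow the paper's own route: the scaling identity \eqref{asymptotics} giving $\lambda_1(r_1,R)\ge c/R^2$ for large $R$, combined with \eqref{estimate2} and the coarea formula for Claim~1, and with \eqref{estimate} together with the summability of $R\mapsto\int_{\Sigma_{\alpha R}}|\nabla v|^2\,d\Sigma$ for Claim~2.

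Claim 3, however, is not proved. You correctly isolate the identity
$$
2\int_{\Sigma_{\alpha R}}v_\theta^-\,\frac{\partial v_\theta^-}{\partial\nu}\,d\Sigma
=\Psi'(R)-\frac{N-1}{R}\,\Psi(R),
\qquad
\Psi(R)=\int_{\Sigma_{\alpha R}}(v_\theta^-)^2\,d\Sigma,
$$
and you extract a sequence with $\Psi(R_k)/R_k\to0$, but the decisive step — refining the sequence so that \emph{also} $\Psi'(R_k)\to0$ — is only announced, and the mechanism you propose cannot deliver it: integrability of $\Psi/R^2$ and absolute continuity of $\Psi$ give no pointwise control on $\Psi'$, and selecting Lebesgue points of $\Psi'$ only guarantees that $\Psi'(R_k)$ agrees with the limit of its local averages, not that it is small. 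The way out is not to make the two summands small separately but to exploit that the right-hand side is an exact derivative, $R^{N-1}\bigl(\Psi(R)\,R^{1-N}\bigr)'$. This is what the paper does: if the boundary integral stayed $\ge\varepsilon_0$ (or $\le-\varepsilon_0$) for all $R\ge R_0$, integrating $\bigl(\Psi(R)R^{1-N}\bigr)'\ge\varepsilon_0 R^{1-N}$ would force $\Psi(R)=f'(R)\to+\infty$, where $f(R)=\int_{\Omega_{0\alpha}\cap B_R}(v_\theta^-)^2\,dx$, contradicting the finiteness of $\lim_{R\to+\infty}f(R)$; and \eqref{v_theta} follows. Note also that the paper's contradiction rests on the summability of $\Psi$ itself (i.e.\ boundedness of $f$), which is stronger than the slice bound $\Psi(R)\le R^2\int_{\Sigma_{\alpha R}}|\nabla v|^2\,d\Sigma$ you use; if you insist on working only with $\Psi/R^2\in L^1$, you must still integrate the exact derivative and discuss the sign of $\bigl(\Psi R^{1-N}\bigr)'$, rather than attempt pointwise control of $\Psi'$.
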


\begin{proof}
1. By the scaling property of the eigenvalues we may write
\begin{equation}\label{asymptotics}
\Big( \frac{R}{\, R_0 \,} \Big)^{\! 2} \, \lambda_1(r_1,R)
=
\lambda_1 \Big( \frac{\, R_0 \,}R \, r_1, \, R_0 \Big)
\stackrel{R \to +\infty}{\longrightarrow}
\lambda_1(0, R_0) > 0
.
\end{equation}
We note in passing that recent advances on spectral asymptotics are found, for instance, in~\cite{Jimbo} and in the references cited there. From~\eqref{asymptotics} it follows that there exists $\varepsilon_0 > 0$ such that $\lambda_1(r_1,R) \ge \varepsilon_0/R^2$ for $R$ large. This and~\eqref{estimate2} imply
\begin{align*}
\frac{\, \varepsilon_0 \,}{\, R^2 \,}
\int_{ \Sigma_{\alpha R}}
v^2 \, d\Sigma
\le
\int_{ \Sigma_{\alpha R}}
|\nabla v|^2 \, d\Sigma
.
\end{align*}
Taking into account that the right-hand side is summable by assumption over the interval $(r_1,+\infty)$, Claim~\ref{C1} follows.

2. If the claim were false, then by~\eqref{estimate} there would be $R_0 > r_1$ and $\varepsilon_0 > 0$ such that for almost every $R \ge R_0$
$$
\int_{\Sigma_{\alpha R}}
|\nabla v|^2 \, d\Sigma
\ge \varepsilon_0 \sqrt{\lambda_1(r_1,R) \,}
.
$$
Taking the asymptotics~\eqref{asymptotics} into account, for almost every $R$ large enough we may write
$$
\int_{ \Sigma_{\alpha R}}
|\nabla v|^2 \, d\Sigma
\ge
\frac{\, \varepsilon_0 \,}2
\,
\frac{\, R_0 \,}R
\,
\sqrt{\lambda_1(0,R_0) \,}
.
$$
Integrating both sides in $dR$ we would get $\Vert \nabla v \Vert_{L^2(\Omega_{0\alpha})} = +\infty$, which is in contrast with the definition of the function space $\fspace_\alpha$.

\smallskip3. Since $\nabla v \in L^2(\Omega_{0\beta})$, the function
$$
f(R)
=
\int_{\Omega_{0\alpha} \cap B_R} (v_\theta^-)^2 \, dx
$$
has a finite limit for $R \to +\infty$. By differentiation we find
$$
f'(R)
=
\int_{ \Sigma_{\alpha R}}
(v_\theta^-)^2 \, d\Sigma.
$$
Differentiating once more yields
$$
f''(R)
=
\int_{\Sigma_{\alpha R}}
2v_\theta^- \, \frac{\, \partial v_\theta^- \,}{\partial \nu}
\, d\Sigma
+
\frac{\, N - 1 \,}R \, f'(R),
$$
which may be rewritten as
$$
\Big(
\frac{\, f'(R) \,}{\, R^{N - 1} \,}
\Big)'
=
\frac1{\, R^{N - 1} \,}
\int_{ \Sigma_{\alpha R}}
2v_\theta^- \, \frac{\, \partial v_\theta^- \,}{\partial \nu}
\, d\Sigma
.
$$
If the last integral were larger than $\varepsilon_0 > 0$ for $R \ge R_0$, then, integrating both sides over the interval $(R_0,R)$, we would get
$$
\frac{\, f'(R) \,}{\, R^{N - 1} \,}
=
C
+
\varepsilon_0
\int_{R_0}^R \frac{dr}{\, r^{N - 1} \,}
$$
which implies $f'(R) \to +\infty$ as $R \to +\infty$. But this is impossible because $f(R)$ converges to a finite limit. A similar contradiction arises if we assume that the integral in~\eqref{v_theta} keeps smaller than $-\varepsilon_0 < 0$.
\end{proof}

We note in passing that~\eqref{limit} continues to hold in the special case when $N = 2$ even though $v$ does not vanish along $\Gamma_{\!\alpha}$. However, the proof is different:

\begin{proposition}
In the special case when $N = 2$, for every $v \in \fspace$ and every $\alpha \in (0,\beta]$ there exists a sequence $R_k \to +\infty$ such that\/~{\rm(\ref{limit})} holds.
\end{proposition}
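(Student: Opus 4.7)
The plan is to convert the boundary integral in~\eqref{limit} into a derivative of a radial quantity, estimate that derivative via Cauchy–Schwarz, and rule out a positive lower bound by combining a logarithmic growth estimate for~$v$ along rays with the summability of~$\partial_r v$.

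In two dimensions $d\Sigma = R\,d\theta$, and setting
$$A(R):=\int_0^\alpha v^2(R,\theta)\,d\theta,\qquad B(R):=\int_0^\alpha (\partial_r v(R,\theta))^2\,d\theta$$
for $R>r_1$ (with any $R>0$ if $r_1=0$), I would rewrite $\int_{\Sigma_{\alpha R}} v\,\frac{\partial v}{\partial\nu}\,d\Sigma = \tfrac12\,R\,A'(R)$ and observe, by the Cauchy–Schwarz inequality in $\theta$, that $|A'(R)|^2 \le 4\,A(R)\,B(R)$. It is therefore enough to produce a sequence $R_k\to+\infty$ along which $R_k^{2}\,A(R_k)\,B(R_k)\to 0$. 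Compared with the preceding lemma, the missing ingredient is that $v$ need not vanish on~$\Gamma_\alpha$, so the Poincaré inequality~\eqref{estimate2} is not available; I would replace it by a one–dimensional radial growth bound. Fixing $R_0$ larger than $\max\{r_1,0\}$ and using $v(R,\theta)=v(R_0,\theta)+\int_{R_0}^R \partial_r v(s,\theta)\,ds$ together with Cauchy–Schwarz with weight $1/\sqrt{s}$, I obtain
$$v^2(R,\theta) \le 2\,v^2(R_0,\theta) + 2\log(R/R_0)\int_{R_0}^R s\,(\partial_r v)^2(s,\theta)\,ds.$$
Integrating in~$\theta$, recognising $dx = s\,ds\,d\theta$ on $\Omega_{0\alpha}\cap(B_R\setminus B_{R_0})$, and invoking $\nabla v\in L^2(\Omega_{0\beta})$ yields the logarithmic estimate
$$A(R) \le 2\,A(R_0) + 2\,\|\nabla v\|_{L^2(\Omega_{0\beta})}^{2}\,\log(R/R_0),$$
with $A(R_0)<+\infty$ because $v\in C^0(\overline{\Omega_{0\beta}})$.

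The conclusion then follows by contradiction. If no sequence $R_k$ realising~\eqref{limit} existed, there would be $\varepsilon_0>0$ and $R_\ast>R_0$ with $R^2\,A(R)\,B(R)\ge \varepsilon_0^{2}$ for every $R\ge R_\ast$, and the logarithmic bound on~$A(R)$ would force
$$R\,B(R) \ge \frac{\varepsilon_0^{2}}{R\,A(R)} \ge \frac{\mathrm{const}}{R\log R}$$
for all $R$ sufficiently large. Integrating over $[R_\ast,+\infty)$, the left–hand side equals $\int_{\Omega_{0\alpha}\cap B_{R_\ast}^{c}}(\partial_r v)^{2}\,dx \le \|\nabla v\|_{L^2(\Omega_{0\beta})}^{2}<+\infty$, whereas the right–hand side diverges because $\int^{\infty}\!dR/(R\log R)=+\infty$. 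The resulting contradiction proves the claim. The main obstacle, and also the reason this argument is tied to $N=2$, is to control the $L^2$–norm of~$v$ on arcs at infinity without any Dirichlet condition; it is precisely the one–dimensional fundamental–theorem-of-calculus reasoning combined with the weighted Cauchy–Schwarz inequality that supplies this control.
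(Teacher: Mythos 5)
Your proposal is correct and follows essentially the same route as the paper: the boundary integral is bounded via Cauchy--Schwarz on the arc, the lack of a Poincar\'e inequality is compensated by the fundamental theorem of calculus along rays combined with the weighted Cauchy--Schwarz inequality (weight $1/\sqrt{s}$), yielding the same logarithmic growth bound for $v$, and the conclusion follows by contradiction from the divergence of $\int dR/(R\log R)$ against the finiteness of $\Vert\nabla v\Vert_{L^2}$. The only difference is cosmetic bookkeeping (working with $A(R)$, $B(R)$ and $A'(R)$ rather than with the pointwise-in-$\theta$ bound $v^2(R,\theta)\le C\,\psi(\theta)\log R$).
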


\begin{proof}
By the Cauchy-Schwarz inequality we may still write~\eqref{Cauchy-Schwarz}, which holds now for every $R > r_1$ because  $v \in C^1(\overline \Omega_{0\beta} \setminus B_R)$. However, the Poincar\'e inequality fails because of the lack of boundary conditions. Nevertheless, the first integral in the right-hand side is estimated as follows. Fix $r_0 > r_1$, so that the function $v(r_0, \theta)$ of the variable~$\theta$ belongs to $C^1([0,\alpha])$. By the fundamental theorem of calculus we may write
\begin{align*}
|v(R,\theta) - v(r_0,\theta)|
&\le
\int_{r_0}^R
\Big| \frac{\, \partial v \,}{\, \partial r \,} \Big| \, dr
\\
\noalign{\medskip}
&\le
\left(
\int_{r_0}^R
\frac1{\, r \,} \, dr
\right)^{\!\frac12}
\left(
\int_{r_0}^R
r \, \Big( \frac{\, \partial v \,}{\, \partial r \,} \Big)^{\! 2} \, dr
\right)^{\!\frac12}
\!\!.
\end{align*}
Recall that the area element in $\R^2$ is $r \, dr \, d\theta$. Therefore, since $\nabla v \in L^2(\Omega_{0\alpha})$, the last integral converges almost everywhere as $R \to +\infty$ to some function $\psi(\theta) < +\infty$ which is summable over the interval $(0,\alpha)$. Hence $v^2(R,\theta) \le C \, \psi(\theta) \log R$ for almost every~$\theta$ and for large $R$, with a convenient constant~$C$. By plugging this estimate into~\eqref{Cauchy-Schwarz} we obtain
$$
\left|
\int_{ \Sigma_{\alpha R}}
v \, \frac{\, \partial v \,}{\partial \nu}
\, d\Sigma
\right|
\le
C \, (R \, \log R)^\frac12
\left(
\int_{ \Sigma_{\alpha R}}
|\nabla v|^2 \, d\Sigma
\right)^{\!\frac12}
$$
where we have redefined the constant~$C$. If the claim were false, there would be $R_0 > r_0$ and $\varepsilon_0 > 0$ such that for every $R \ge R_0$
$$
\int_{ \Sigma_{\alpha R}}
|\nabla v|^2 \, d\Sigma
\ge
\frac{\varepsilon_0}{\, R \, \log R \,}
.
$$
Integrating both sides over the interval $[R_0,+\infty)$ we would get $\Vert \nabla v \Vert_{L^2(\Omega_{0\alpha})} = +\infty$, which is in contrast with the definition of the function space $\fspace$.
\end{proof}

\begin{remark}
In general, \eqref{limit} fails for $v \in \fspace$ and $N \ge 3$. For instance, we may take $r_1 > 0$, $\varepsilon \in (0, \, \frac{N}2 - 1)$ and
$$
v(x) = 1 - \frac1{\, |x|^{\frac{N}2 - 1 + \varepsilon} \,},\
\mbox{hence}\
\nabla v(x)
=
\frac{\, \frac{N}2 - 1 + \varepsilon \,}
{R^{\frac{N}2 + \varepsilon}}
\,
\nu\
\mbox{when $x \in \Sigma_{\beta R}$.}
$$
In this example we have $v \in \fspace$ because $\varepsilon > 0$. However,
$$
\int_{ \Sigma_{\beta R}}
v \, \frac{\, \partial v \,}{\partial \nu}
\, d\Sigma
=
\Big( 1 - \frac1{\, R^{\frac{N}2 - 1 + \varepsilon} \,} \Big)
\,
\Big( \frac{N}2 - 1 + \varepsilon \Big)
\,
R^{\frac{N}2 - 1 - \varepsilon}
\,
| \Omega_{0\beta} \cap \partial B_1|
\to +\infty
.
$$
\end{remark}

\begin{remark}
A function $v \in \fspace_\alpha \subset \fspace$ may well be unbounded. To see this, recall that for every $N \ge 2$ there exists a function $\varphi \in C^\infty_0(B_1)$ such that $\Vert \nabla \varphi \Vert_{L^2(B_1)}$ is arbitrarily small and $\sup |\varphi|$ is arbitrarily large. Then we may take a sequence of centers $x_k \in \Omega_{0\alpha}$ and a sequence of radii $r_k > 0$ such that $|x_k| \to +\infty$ and the balls $B_k = B(x_k,r_k) \subset \Omega_{0\alpha}$ are pairwise disjoint. For each~$k$ we fix $\varphi_k \in C^\infty_0(B_k)$ such that $\Vert \nabla \varphi_k \Vert^2_{L^2(B_k)} \le 2^{-k}$ and $\sup |\varphi_k| \ge k$. The series
$$
v(x) = \sum_{k = 1}^{+\infty} \varphi_k(x)
$$
trivially converges in $\fspace_\alpha$ because for every $x$ it has at most one nonzero term, and it is apparent that $v$ is unbounded and satisfies
$$
\Vert \nabla v \Vert^2_{L^2(\Omega_{0\alpha})}
\le
\sum_{k = 1}^{+\infty} 2^{-k}
=
1
.
$$
\end{remark}

\begin{remark}
If $N=2$ there exist radial, unbounded functions in the space~$\fspace$. For instance, we may take $v(x) = \log \log |x|$ for $|x|>1$: in this case $\nabla v = (|x| \, \log |x|)^{-1} \, \frac {x}{|x|}$ and therefore $\nabla v \in L^2(\Omega_{0\beta})$ provided that $r_1 > 1$. If, instead, $N \ge 3$, every radial function $v \in \fspace$ is bounded. To see this, let us write $v(x) = f(|x|)$. In the special case when $r_1 = 0$, the surface integral
$$
I(R)
=
\int_{ \Sigma_{\beta R}}
|\nabla v|^2 \, d\Sigma
$$
equals $\frac\beta{2\pi} \, N \omega_N \, \psi(R)$, where $\psi(R) = R^{N - 1} \, (f'(R))^2$ and $N \omega_N = |\partial B_1|$ is the Hausdorff measure of the unit spherical surface in~$\mathbb R^N$. Hence, $\psi(R)$ must be summable over the interval $(0,+\infty)$ because $\nabla v \in L^2(\Omega_{0\beta})$. By the Cauchy-Schwarz inequality, for every $R > 1$ we may write
\begin{align*}
|f(R) - f(1)|
=
\left| \int_1^R f'(R) \, dR \right|
\le
\int_1^R |f'(R)| \, dR
&=
\int_1^R
\Big(\frac{\, \psi(R) \,}{\, R^{N - 1} \,}\Big)^{\! \frac12} \, dR
\\
&\le
\left(
\int_1^R
\psi(R) \, dR
\right)^{\!\! \frac12}
\left(
\int_1^R
\frac1{\, R^{N - 1} \,}
\right)^{\!\! \frac12}
,
\end{align*}
which shows that the right-hand side keeps bounded as $R \to +\infty$. In the general case when $r_1 \ge 0$ the integral $I(R)$ is asymptotic to $\psi(R)$, and the conclusion follows similarly.
\end{remark}

\section{Sign preservation in unbounded domains}\label{se:8}

In this section we prepare two fundamental lemmas which replace Lemma~\ref{sign} in the case when the domain is unbounded. In order to prove the first result, we need to turn the negative part $v^-$ of a supersolution of the linear equation $L_u \, v = 0$ into a function with bounded support, thus allowing to use the Cauchy-Schwarz inequality~\eqref{cs0}. Following~\cite{gladiali-pacella-weth}, this is accomplished by a family of cut-off functions $\xi_R\in C^\infty_0(\R^N)$, $R > 0$, such that
\begin{equation}\label{gradient}
\mbox{$0\leq \xi_R(x)\leq 1$ and $|\nabla \xi_R(x)|<\displaystyle\frac2{\, R\ }$ for every $x \in \mathbb R^N$,}
\quad
\xi_R(x)=\begin{cases}
1, & \text{ if }|x|<R,
\\
0, & \text{ if }|x|>2R.
\end{cases}
\end{equation}
Later on we will need a further family of cut-off functions in order to deal with possible singularities at $d(x) = r_1$, i.e., the case when $u \not \in W^{2,2}(\Omega_{0\beta} \cap B_R)$, $R > r_1$.
Hence we denote by $\zeta_\delta(x) \in C^\infty_0(\mathbb R^N)$, $\delta > 0$, a function depending only on~$d(x)$ and such that
\begin{equation}\label{zeta}
\mbox{$0\leq \zeta_\delta(x)\leq 1$ and $|\nabla \zeta_\delta(x)| < \displaystyle\frac3{\, \delta \,}$ for all $x \in \mathbb R^N$,}
\quad
\zeta_\delta(x)=\begin{cases}
0, & \text{ if $d(x) < r_1 + \delta/2$,}
\\
1, & \text{ if $d(x) > r_1 + \delta$.}
\end{cases}
\end{equation}

\begin{lemma}[Sign preservation in unbounded domains]\label{sign-preservation}
Fix\/ $\alpha \in (0,\beta)$ and let $K$ be a (possibly empty) compact subset of\/~$\R^N$. Suppose that the set difference $D_\alpha = \Omega_{0\alpha} \setminus K$ is connected, and let $v \in E_\alpha$ satisfy $L_u \, v \ge 0$ in $D_\alpha$ for some $u \in L^\infty(\Omega_{0\alpha})$. Define\/ $\tilde \Gamma_{\! 0} = \partial D_\alpha \cap \Gamma_{\! 0}$, and assume
\begin{equation}\label{stable}
\inf_{\varphi \in
C^1_{0}(D_\alpha \cup \tilde \Gamma_{\! 0})}
Q_{\Omega_{0\alpha}}(\varphi)
\ge
0
.
\end{equation}
Assume, further, that $v \ge 0$ on $\partial D_\alpha \setminus \Gamma_{\! 0}$ together with $v \ge 0$ or\/ $\partial v / \partial \nu \ge 0$ on\/ $\tilde \Gamma_{\! 0}$. If\/ $v$ does not vanish identically in~$D_\alpha$, then either $v > 0$ in~$D_\alpha$ or $v < 0$ there.
\end{lemma}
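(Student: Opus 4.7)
My plan is to adapt the bounded-case proof of Lemma~\ref{sign}: the test function is the negative part $v^{-}=\min\{v,0\}\le 0$, multiplied by the two cut-offs $\xi_{R}$ and $\zeta_{\delta}$ of \eqref{gradient}-\eqref{zeta} so as to become compactly supported in $D_{\alpha}\cup\tilde\Gamma_{\!0}$ and to stay away from the possibly non-regular inner boundary $\gamma_{0\alpha}\cap\partial D_{\alpha}$. Under the boundary hypothesis, $v^{-}=0$ on $\partial D_{\alpha}\setminus\tilde\Gamma_{\!0}$ (in particular on $\Gamma_{\!\alpha}$ because $v\in\fspace_{\alpha}$, and on $\partial K\cap\overline{D_{\alpha}}$ and $\gamma_{0\alpha}\cap\partial D_{\alpha}$ because $v\ge 0$ there); on $\tilde\Gamma_{\!0}$ we have either $v^{-}=0$ or $\partial v/\partial\nu\ge 0$.

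The first step is to multiply $L_{u}v\ge 0$ by the nonnegative function $\varphi_{R,\delta}=-v^{-}\,\xi_{R}^{2}\,\zeta_{\delta}^{2}$ and integrate by parts. The boundary integral on $\tilde\Gamma_{\!0}$ either vanishes (when $v^{-}=0$ there, so $\varphi_{R,\delta}=0$) or is nonpositive (when $\partial v/\partial\nu\ge 0$ and $\varphi_{R,\delta}\ge 0$). Expanding $\nabla\varphi_{R,\delta}$ and using $\nabla v\cdot\nabla v^{-}=|\nabla v^{-}|^{2}$ and $v\,v^{-}=(v^{-})^{2}$ pointwise, the standard rearrangement yields
\begin{equation*}
Q_{\Omega_{0\alpha}}(v^{-}\,\xi_{R}\,\zeta_{\delta})
\le
\int_{D_{\alpha}}(v^{-})^{2}\,|\nabla(\xi_{R}\,\zeta_{\delta})|^{2}\,dx,
\end{equation*}
while \eqref{stable} together with density gives the opposite inequality $Q_{\Omega_{0\alpha}}(v^{-}\,\xi_{R}\,\zeta_{\delta})\ge 0$. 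The right-hand side of the key inequality then vanishes as $R\to+\infty$ and $\delta\to 0$: the $|\nabla\xi_{R}|^{2}$-piece, controlled by $\tfrac{C}{R^{2}}\int_{\{R<|x|<2R\}\cap D_{\alpha}}(v^{-})^{2}\,dx$, tends to zero by Claim~1 of Lemma~\ref{vanishes}; the $|\nabla\zeta_{\delta}|^{2}$-piece, supported in the thin tube $\{r_{1}<d(x)<r_{1}+\delta\}$, tends to zero because $v^{-}$ is continuous up to the inner wall $\gamma_{0\alpha}\cap\partial D_{\alpha}$ where it vanishes (and no $\zeta_{\delta}$ is needed when $r_{1}=0$). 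Hence $Q_{\Omega_{0\alpha}}(v^{-}\,\xi_{R}\,\zeta_{\delta})\to 0$.

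At this point the Cauchy-Schwarz inequality \eqref{cs0}, available thanks to stability, gives $(\B(v^{-}\,\xi_{R}\,\zeta_{\delta},\psi))^{2}\le Q_{\Omega_{0\alpha}}(v^{-}\,\xi_{R}\,\zeta_{\delta})\,Q_{\Omega_{0\alpha}}(\psi)\to 0$ for every $\psi\in C^{\infty}_{0}(D_{\alpha})$, and dominated convergence on the compact support of $\psi$ yields $\B(v^{-},\psi)=0$. Thus $v^{-}$ weakly solves $L_{u}v^{-}=0$ in $D_{\alpha}$. Since $v^{-}\le 0$ is continuous and $D_{\alpha}$ is connected, the strong maximum principle forces either $v^{-}\equiv 0$ (in which case $v\ge 0$, and applying the strong maximum principle to the nontrivial supersolution $v$ of $L_{u}v\ge 0$, together with the Hopf lemma on $\tilde\Gamma_{\!0}$ in the Neumann case, gives $v>0$) or $v^{-}<0$ throughout $D_{\alpha}$, i.e., $v<0$ there. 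The main technical obstacle is the error-term estimate, particularly the $\zeta_{\delta}$-contribution near the inner boundary when $r_{1}>0$: only the continuity of $v^{-}$ is available there, so one has to combine it carefully with the thin-tube volume estimate in order to offset the $\delta^{-2}$-prefactor.
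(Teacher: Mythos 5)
Your overall strategy is the one the paper uses: test $L_u v\ge 0$ against the (truncated) negative part, rearrange via the identity \eqref{calculus} to get $Q_{\Omega_{0\alpha}}(\xi_R\,v^-)\le\int|\nabla\xi_R|^2(v^-)^2\,dx$, send this to zero through Claim~1 of Lemma~\ref{vanishes}, invoke Cauchy--Schwarz to conclude that $v^-$ weakly solves $L_u v^-=0$ in $D_\alpha$, and finish with the maximum principle (the paper uses unique continuation for the dichotomy on $v^-$, you use the strong maximum principle; both are fine here).

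The one genuine problem is the second cut-off $\zeta_\delta$. First, it is unnecessary for this lemma: by the definition of $\fspace_\alpha\subset\fspace$ the function $v$ already has $\nabla v\in L^2(\Omega_{0\alpha},\R^N)$ globally, so $\xi_R\,v^-$ lies in the relevant Sobolev space with no singularity at $d(x)=r_1$ to excise. The $\zeta_\delta$ truncation is reserved in the paper for $u_\theta$, whose gradient may fail to be square-summable near $d(x)=r_1$. Second, as you execute it, the $\zeta_\delta$ error term is not actually controlled: on the set where $\nabla\zeta_\delta\ne 0$ you have $|\nabla\zeta_\delta|^2\le 9/\delta^2$, the volume of that tube inside $B_{2R}$ is only $O(\delta)$ when $r_1>0$, and mere continuity of $v^-$ up to the wall gives $(v^-)^2\le\mu(\delta)$ with $\mu(\delta)\to 0$ but with no rate; the product is $O(\mu(\delta)/\delta)$, which need not vanish. (The analogous step in Proposition~\ref{small} leans on extra structure of $u_\theta$; you cannot borrow it for a general $v\in\fspace_\alpha$.) The fix is simply to delete $\zeta_\delta$ and run the argument with $\xi_R$ alone, which is exactly the paper's proof.
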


\begin{proof}
Let $r_K = \max_{x \in K} d(x)$ and define $R_0 = \max\{\, r_1, r_K \,\}$. By~\eqref{stable}, for every $R > R_0$ the bilinear form~\eqref{bilinear} is positive semidefinite in the Cartesian product $H^1_{\partial D \setminus \Gamma_0}(D) \times H^1_{\partial D \setminus \Gamma_0}(D)$, where $D = D_\alpha \cap B_{2R}$, and the Cauchy-Schwarz inequality~\eqref{cs0} holds. Let us check that the negative part $v^-(x) = \min \{\ 0, \, v(x)  \,\} \le 0$ is a weak solution of
\begin{equation}\label{local}
L_u \, v^- = 0,
\end{equation}
i.e., $\B(v^-, \, \psi) = 0$ for every $\psi \in C^\infty_0(D_\alpha)$. By making $R$ larger we may achieve that $\mathop{\rm supp} \psi \subset B_R$, hence $\psi \in C^\infty_0(D)$. Letting $\varphi(x) = \xi_R(x) \, v(x) \in H^1_{\partial D \setminus \Gamma_0}(D)$ in~\eqref{cs0} we obtain
\begin{equation}\label{cs}
0
\le
\big( \B(v^-, \psi) \big)^2
=
\big( \B(\xi_R \, v^-, \psi) \big)^2
\leq
Q_{\Omega_{0\alpha}}(\xi_R \, v^-) \, Q_{\Omega_{0\alpha}}(\psi)
.
\end{equation}
The value of $Q_{\Omega_{0\alpha}}(\xi_R \, v^-)$ may depend on~$R$: however we claim that it is infinitesimal as $R \to +\infty$. Indeed, by differentiation we find
\begin{align}
\nonumber
|\nabla \xi_R|^2 \, \varphi^2
+
\nabla(\xi_R^2 \, \varphi) \, \nabla \varphi
&=
|\nabla \xi_R|^2 \, \varphi^2
+
2\xi_R \, \varphi \, \nabla \xi_R \, \nabla \varphi + \xi_R^2 \, |\nabla \varphi|^2
\\
\noalign{\medskip}
\label{calculus}
&=
|\nabla(\xi_R \, \varphi)|^2
,
\end{align}
an identity that will be repeatedly used in the sequel, with different choices of~$\varphi$. Taking into account the boundary conditions $v^- = 0$ on $\partial D \setminus \Gamma_{\! 0}$ and $v^- \, \frac{\partial v^-}{\partial \nu} \le 0$ on $\partial D \cap \Gamma_{\! 0}$, multiplication of the inequality $L_u \, v \ge 0$ by $\xi_R^2 \, v^-$ and integration over~$D$ yields
$$
\int_D
\Big\{
\nabla(\xi_R^2 \, v^-) \, \nabla v^-
-
f'(d(x),z,u)
\,
\xi_R^2 \, (v^-)^2
\Big\}
\, dx
\le 0
.
$$
Letting $\varphi = v^-$ in~\eqref{calculus}, and using~\eqref{gradient}, this is turned into
\begin{equation}\label{parts}
0 \le
Q_{\Omega_{0\alpha}}(\xi_R \, v^-)
\le
\int_D
|\nabla \xi_R|^2 \, (v^-)^2 \, dx
\le
\frac4{\, R^2 \,}
\int_R^{2R}
\bigg(
\int_{ \Sigma_{\alpha r}}
(v^-)^2 \, d\Sigma
\bigg) \, dr
,
\end{equation}
where $ \Sigma_{\alpha r} = \Omega_{0\alpha} \cap \partial B_r$. To manage with the last integral, note that
\begin{align*}
\frac1{\, (2R)^2 \,}
\int_R^{2R}
\bigg(
\int_{ \Sigma_{\alpha r}}
(v^-)^2 \, d\Sigma
\bigg)
\, dr
&\le
\int_R^{2R}
\bigg(
\frac1{\, r^2 \,}
\int_{ \Sigma_{\alpha r}}
(v^-)^2 \, d\Sigma
\bigg)
\, dr
\\
\noalign{\medskip}
&\le
\int_R^{+\infty}
\bigg(
\frac1{\, r^2 \,}
\int_{\Sigma_{\alpha r}}
(v^-)^2 \, d\Sigma
\bigg)
\, dr
,
\end{align*}
therefore by Claim~\ref{C1} of Lemma~\ref{vanishes} the right-hand side of~\eqref{parts} is infinitesimal. But then \eqref{cs} implies $\B(v^-, \psi) = 0$. Since $\psi$ is arbitrary, $v^-$ is a weak solution of equation~\eqref{local}, as claimed. Now the unique continuation property implies that either $v^- \equiv 0$ or $v^- < 0$ in~$D_\alpha$. In the last case we obviously have $v < 0$ in $D_\alpha$. If, instead, $v^- \equiv 0$ in $D_\alpha$, then $v \ge 0$, and since $L_u \, v \ge 0$ in $D_\alpha$, the strong maximum principle implies that either $v \equiv 0$ or $v > 0$ in the connected set~$D_\alpha$.
\end{proof}

\begin{remark}
If we restrict to the case when $v \ge 0$ on~$\Gamma_{\! 0}$, we may replace assumption~\eqref{stable} with the weaker requirement
$$
\inf_{\varphi \in C^1_{0}(D_\alpha)}
Q_{\Omega_{0\alpha}}(\varphi)
\ge
0
.
$$
\end{remark}

In the sequel we will need to apply a similar argument to $u_\theta$ and $u^-_\theta$, using a double truncation. Therefore we prepare:

\begin{proposition}\label{small}
Assume $u \in \cal Y$ is a solution to~{\rm(\ref{P})}, and define $u^-_\theta(x) = \min\{\, u_\theta(x), \, 0 \,\}$. Let\/ $\xi_R, \zeta_\delta$ be the cut-off functions in~{\rm(\ref{gradient})}-{\rm(\ref{zeta})}. For every $\varepsilon > 0$ there exist\/ $\delta > 0$ and $R > r_1$ such that\/ $Q_{\Omega_{0\beta}}(\zeta_\delta \, \xi_R \, u_\theta) \in [0,\varepsilon)$. If, furthermore, $u^-_\theta = 0$ on~$\Gamma_{\! \frac\beta2}$ then we may also achieve\/ $Q_{\Omega_{0\frac\beta2}}(\zeta_\delta \, \xi_R \, u^-_\theta) \in [0,\varepsilon)$.
\end{proposition}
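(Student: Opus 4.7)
The plan is to use the PDE $L_u\,u_\theta = 0$ from Proposition~\ref{solution} together with the algebraic identity~\eqref{calculus}. Set $\eta = \zeta_\delta\,\xi_R$ and observe that the product $\eta^2 u_\theta$ has compact support in $\Omega_{0\beta}\cup\Gamma_0\cup\Gamma_\beta$: it avoids $\gamma_{0\beta}$ thanks to $\zeta_\delta$, is supported in $B_{2R}$ thanks to $\xi_R$, and on $\Gamma_0\cup\Gamma_\beta$ it vanishes because $u_\theta=0$ there by the Neumann condition in~\eqref{P}. Hence I may legitimately multiply $L_u\,u_\theta = 0$ by $\eta^2 u_\theta$ and integrate by parts, with no boundary contribution, to obtain
\[
\int_{\Omega_{0\beta}} \nabla u_\theta\cdot\nabla(\eta^2 u_\theta)\,dx
=
\int_{\Omega_{0\beta}} f'(d(x),z,u)\,\eta^2 u_\theta^2\,dx.
\]
Plugging the identity $|\nabla(\eta u_\theta)|^2 = |\nabla\eta|^2 u_\theta^2 + \nabla(\eta^2 u_\theta)\cdot\nabla u_\theta$ into the definition of $Q_{\Omega_{0\beta}}$ and cancelling via the previous display yields the key formula
\[
Q_{\Omega_{0\beta}}(\zeta_\delta\,\xi_R\,u_\theta)
=
\int_{\Omega_{0\beta}} |\nabla(\zeta_\delta\,\xi_R)|^2\,u_\theta^2\,dx \;\ge\; 0,
\]
so only an upper bound on the right-hand side remains.

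Next I would split $|\nabla(\zeta_\delta\xi_R)|^2\le 2(\zeta_\delta^2|\nabla\xi_R|^2+\xi_R^2|\nabla\zeta_\delta|^2)$ and exploit the cylindrical-coordinates inequality $u_\theta^2\le d(x)^2|\nabla u|^2$, which follows from $|\nabla u|^2 = u_r^2+r^{-2}u_\theta^2+|\nabla_z u|^2$. For the outer piece, using $d(x)\le|x|\le 2R$ on $\operatorname{supp}\nabla\xi_R$ together with $|\nabla\xi_R|^2\le 4/R^2$ from~\eqref{gradient} gives
\[
\int \zeta_\delta^2|\nabla\xi_R|^2\,u_\theta^2\,dx
\le
16\int_{|x|\ge R}|\nabla u|^2\,dx
\;\xrightarrow{R\to+\infty}\;0,
\]
because $\nabla u\in L^2(\Omega_{0\beta})$. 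For the inner piece, on $\operatorname{supp}\nabla\zeta_\delta=\{r_1+\delta/2\le d(x)\le r_1+\delta\}$ one has $|\nabla\zeta_\delta|^2\le 9/\delta^2$ from~\eqref{zeta}. When $r_1=0$ the same pointwise bound gives $u_\theta^2\le\delta^2|\nabla u|^2$ directly, so this contribution is controlled by $9\int_{d(x)\le\delta}|\nabla u|^2 dx\to0$ as $\delta\to 0$. When $r_1>0$ one uses instead the continuous extension of $u_\theta$ to $\gamma_{0\beta}$ (where $u_\theta=0$), together with the regularity of $u$ up to $\gamma_{0\beta}$ guaranteed by the Dirichlet datum, to bound $u_\theta^2$ on the thin shell by a quantity that compensates the factor $1/\delta^2$. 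Choosing $R$ large and then $\delta$ small makes the sum of the two contributions smaller than $\varepsilon$.

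For the second statement, I would carry out the same computation on the smaller sector $\Omega_{0\frac\beta2}$ with test function $\eta^2 u_\theta^-$, where $\eta=\zeta_\delta\xi_R$. The function $u_\theta^-$ is Lipschitz in $x$, and the standard truncation rules give $\nabla u_\theta\cdot\nabla u_\theta^- = |\nabla u_\theta^-|^2$ and $u_\theta\,u_\theta^- = (u_\theta^-)^2$ almost everywhere. The boundary terms arising from the integration by parts over $\Omega_{0\frac\beta2}$ vanish on $\gamma_{0\frac\beta2}$ thanks to $\zeta_\delta$, on $\Gamma_0$ because $u_\theta=0$ and on $\Gamma_{\frac\beta2}$ precisely because of the extra hypothesis $u_\theta^-=0$ there. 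Exactly the same manipulation as before then yields
\[
Q_{\Omega_{0\frac\beta2}}(\zeta_\delta\,\xi_R\,u_\theta^-)
=
\int_{\Omega_{0\frac\beta2}} |\nabla(\zeta_\delta\,\xi_R)|^2\,(u_\theta^-)^2\,dx,
\]
and the estimate is concluded by the same splitting since $(u_\theta^-)^2\le u_\theta^2$.

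The main technical obstacle I expect is the inner cut-off contribution in the case $r_1>0$. The pointwise inequality $u_\theta^2\le d(x)^2|\nabla u|^2$ is no longer effective there, because $d(x)$ does not vanish on the strip $\operatorname{supp}\nabla\zeta_\delta$. What saves the argument is precisely the continuity requirement on $u_\theta$ built into the definition of $\cal Y$, which together with standard boundary regularity up to the smooth Dirichlet part $\gamma_{0\beta}$ forces $u_\theta$ to vanish fast enough on approach to $\gamma_{0\beta}$ to beat the divergent prefactor $1/\delta^2$ against the thin-shell volume of order~$\delta$.
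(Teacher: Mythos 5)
Your reduction is the paper's: multiplying $L_u\,u_\theta=0$ by $(\zeta_\delta\,\xi_R)^2u_\theta$ and using the identity~\eqref{calculus} gives exactly the paper's formula~\eqref{therefore}, namely $Q_{\Omega_{0\beta}}(\zeta_\delta\,\xi_R\,u_\theta)=\int|\nabla(\zeta_\delta\,\xi_R)|^2u_\theta^2\,dx\ge0$, and the same computation with $(\zeta_\delta\,\xi_R)^2u_\theta^-$ on the half-sector, closed by $(u_\theta^-)^2\le u_\theta^2$, is also what the paper does. Your outer ($\xi_R$) estimate via $|u_\theta|\le d(x)\,|\nabla u|$ and $\nabla u\in L^2(\Omega_{0\beta})$ coincides with the paper's, and for $r_1=0$ your inner ($\zeta_\delta$) estimate via $u_\theta^2\le d(x)^2|\nabla u|^2\le\delta^2|\nabla u|^2$ on $\operatorname{supp}\nabla\zeta_\delta$ is complete and arguably cleaner than the paper's.

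The genuine gap is the inner estimate when $r_1>0$. You correctly observe that the shell $\{r_1+\delta/2\le d(x)\le r_1+\delta\}\cap B_{2R}$ has measure of order $\delta$ against the prefactor $9/\delta^2$, so you would need $u_\theta^2=o(\delta)$ there, i.e.\ $u_\theta=o\big((d(x)-r_1)^{1/2}\big)$ uniformly near $\gamma_{0\beta}$; you then assert that ``standard boundary regularity up to the smooth Dirichlet part'' supplies such a rate. It does not, and cannot be invoked here: the hypothesis $u\in{\cal Y}$ gives only $u_\theta\in C^0(\overline\Omega_{0\beta}\setminus\Upsilon)$ with $u_\theta=0$ on $\gamma_{0\beta}$, and the cut-off $\zeta_\delta$ was introduced precisely because $u$ may fail to be in $W^{2,2}$ up to $d(x)=r_1$ (see the discussion preceding \eqref{gradient}--\eqref{zeta}), so no modulus of continuity for $u_\theta$ at $\gamma_{0\beta}$ is available beyond plain continuity. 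Plain continuity yields only $u_\theta^2<\mu$ on the shell, hence a bound of order $\mu/\delta$, which does not close because $\delta$ must be chosen small \emph{after} $\mu$. The paper resolves this step differently: keeping $R$ fixed, it proves the convergence statement~\eqref{convergence} by combining the uniform vanishing of $u_\theta$ as $d(x)\to r_1$ with the measure bound $|S_{\delta R}|\le C\delta^2$, so that the $18/\delta^2$ factor is absorbed and the total error is $O(\mu)$ with $\mu$ arbitrary. (That measure bound is itself immediate only for $r_1=0$; for $r_1>0$ the shell has measure of order $r_1\delta$, so this point of the published argument is delicate as well --- but in any event the mechanism is a measure/uniform-continuity balance, not the boundary regularity you appeal to.) To repair your proof you would have to extract an actual decay rate for $u_\theta$ at $\gamma_{0\beta}$, or replace it by a quantitative smallness-of-measure argument in the spirit of~\eqref{convergence}.
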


\begin{proof}
First of all, we obtain
\begin{equation}\label{R}
\int_{\Omega_{0\beta}}
|\nabla \xi_R|^2 \, u_\theta^2 \, dx
<
\frac\varepsilon{\, 2 \,}
\end{equation}
by taking $R$ large. Indeed, we have
$$
\int_{\Omega_{0\beta}}
|\nabla \xi_R|^2 \, u_\theta^2 \, dx
\le
\frac4{\, R^2 \,}
\int_R^{2R}
\bigg(
\int_{ \Sigma_{\beta r}}
u_\theta^2 \, d\Sigma
\bigg) \, dr
,
$$
where $\Sigma_{\beta r}= \Omega_{0\beta} \cap \partial B_r$. Since $|u_\theta(x)| \le d(x) \, |\nabla u(x)| \le r \, |\nabla u|$ on $\partial B_r$, we may write
\begin{align*}
\frac1{\, 4R^2 \,}
\int_R^{2R}
\bigg(
\int_{ \Sigma_{\beta r}}
u_\theta^2 \, d\Sigma
\bigg) \, dr
&\le
\frac1{\, 4R^2 \,}
\int_R^{2R}
r^2
\,
\bigg(
\int_{ \Sigma_{\beta r}}
|\nabla u|^2 \, d\Sigma
\bigg) \, dr
\\
\noalign{\medskip}
&\le
\int_R^{+\infty}
\bigg(
\int_{ \Sigma_{\beta r}}
|\nabla u|^2 \, d\Sigma
\bigg) \, dr
,
\end{align*}
where the right-hand side is infinitesimal as $R \to +\infty$. Hence we may choose $R$ so that~\eqref{R} holds. Next we observe that the function $u_\theta$, which is uniformly continuous in the compact set $\overline \Omega_{0\beta} \cap \overline B_{2R}$, converges uniformly to zero when $d(x) \to r_1$. Keeping this in mind, and letting $D_{\delta R} = \{\, x \in \Omega_{0\beta} : \zeta_\delta(x) \, \xi_R(x) > 0 \,\}$, we may check that
\begin{equation}\label{convergence}
\lim_{\delta \to 0}
\int_{D_{\delta R}}
|\nabla (\zeta_\delta \, \xi_R)|^2 \, u_\theta^2 \, dx
=
\int_{\Omega_{0\beta}}
|\nabla \xi_R|^2 \, u_\theta^2 \, dx
.
\end{equation}
To see this, choose $\mu > 0$ and let $\delta$ be so small that $u_\theta^2 < \mu$ in the set $S_{\delta R} = \{\, x \in \Omega_{0\beta} \cap B_{2R} : d(x) < r_1 + \delta \,\}$. Since $\nabla(\zeta_\delta \, \xi_R) = \zeta_\delta \, \nabla \xi_R + \xi_R \, \nabla \zeta_\delta$, we have
\begin{align*}
\Big|
|\nabla (\zeta_\delta \, \xi_R)|^2
-
|\nabla \xi_R|^2
\Big|
&=
\Big|
(\zeta_\delta^2 - 1) \, |\nabla \xi_R|^2
+
2 \, \zeta_\delta \, \xi_R \, \nabla \zeta_\delta \, \nabla \xi_R
+
\xi_R^2 \, |\nabla \zeta_\delta|^2
\Big|
\\
\noalign{\medskip}
&\le
2 \, |\nabla \xi_R|^2
+
2 \, |\nabla \zeta_\delta|^2
\le
\frac8{\, R^2 \,} + \frac{\, 18 \,}{\, \delta^2 \,}
\end{align*}
and the difference of the two preceding integrals is estimated as follows:
\begin{align*}
\left|
\int_{D_{\delta R}}
|\nabla (\zeta_\delta \, \xi_R)|^2 \, u_\theta^2 \, dx
-
\int_{\Omega_{0\beta}}
|\nabla \xi_R|^2 \, u_\theta^2 \, dx
\right|
\le
\mu
\,
|S_{\delta R}|
\,
\Big(
\frac8{\, R^2 \,} + \frac{\, 18 \,}{\, \delta^2 \,}
\Big)
.
\end{align*}
Since $R$ is kept fixed, we have $|S_{\delta R}| \le C \, \delta^2$ for some constant~$C$, hence
$$
\left|
\int_{D_{\delta R}}
|\nabla (\zeta_\delta \, \xi_R)|^2 \, u_\theta^2 \, dx
-
\int_{\Omega_{0\beta}}
|\nabla \xi_R|^2 \, u_\theta^2 \, dx
\right|
\le
\mu
\,
C
\,
\Big(
\frac{\, 8 \,\delta^2 \,}{\, R^2 \,} + 18
\Big)
.
$$
Finally, since $\mu$ is arbitrary,~\eqref{convergence} follows. Recalling~\eqref{R}, we conclude that
\begin{equation}\label{2epsilon}
\int_{D_{\delta R}}
|\nabla (\zeta_\delta \, \xi_R)|^2 \, u_\theta^2 \, dx
<
\varepsilon
\end{equation}
for convenient $R$ and~$\delta$. Now, multiplying the equation $L_u \, u_\theta = 0$ (see Proposition~\ref{solution}) by $(\zeta_\delta \, \xi_R)^2 \, u_\theta$ and integrating by parts over $\Omega_{0\beta} \cap B_{2R}$ we obtain
$$
\int_{D_{\delta R}}
\Big\{
\nabla \big((\zeta_\delta \, \xi_R)^2 \, u_\theta \big) \, \nabla u_\theta
-
(\zeta_\delta \, \xi_R)^2 \, u_\theta^2
\,
f'(d(x),z,u)
\Big\}
\, dx
= 0
.
$$
Replacing $\xi_R$ with $\zeta_\delta \, \xi_R$ in~\eqref{calculus} and letting $\varphi = u_\theta$ (see also~\eqref{parts}) this is turned into
\begin{equation}\label{therefore}
Q_{\Omega_{0\beta}}(\zeta_\delta \, \xi_R \, u_\theta)
=
\int_{D_{\delta R}}
|\nabla (\zeta_\delta \, \xi_R)|^2 \, u_\theta^2 \, dx
.
\end{equation}
Hence $Q_{\Omega_{0\beta}}(\zeta_\delta \, \xi_R \, u_\theta) \in [0,\varepsilon)$ for a convenient choice of~$\delta$. To prove the second claim it suffices to multiply the equation $L_u \, u_\theta = 0$ by $(\zeta_\delta \, \xi_R)^2 \, u_\theta^-$ and integrate by parts over $\Omega_{0\frac\beta2} \cap B_{2R}$. Since $u^-_\theta = 0$ on~$\Gamma_{\! \frac\beta2}$ by assumption, we obtain
$$
\int_D
\Big\{
\nabla\big((\zeta_\delta \, \xi_R)^2 \, u_\theta^-\big) \, \nabla u_\theta^-
-
(\zeta_\delta \, \xi_R)^2 \, (u_\theta^-)^2
\,
f'(d(x), z, u)
\Big\}
\, dx
= 0
,
$$
and therefore (see~\eqref{therefore})
$$
Q_{\Omega_{0\frac\beta2}}(\zeta_\delta \, \xi_R \, u_\theta^-)
=
\int_{D'_{\delta R}}
|\nabla(\zeta_\delta \, \xi_R)|^2 \, (u_\theta^-)^2 \, dx
\le
\int_{D_{\delta R}}
|\nabla(\zeta_\delta \, \xi_R)|^2 \, u_\theta^2 \, dx
,
$$
where  $D'_{\delta R} = \{\, x \in \Omega_{0\frac\beta2} : \zeta_\delta(x) \, \xi_R(x) > 0 \,\}$. The last integral is made arbitrarily small by a convenient choice of $\delta$ and $R$ (see~\eqref{2epsilon}), and the proof is complete.
\end{proof}

\section{Preliminaries in the unbounded half-sector}\label{properties}

The counterpart of Corollary~\ref{corollary} in an unbounded domain is the following

\begin{lemma}[Splitting lemma for unbounded domains]\label{lem-4.1}
Assume $u \in \cal Y$ is a solution to \eqref{P} with Morse index $m(u)\leq 1$, and let $\alpha \in (0,\beta)$. Then either\
\begin{equation}\label{4.1}
\inf_{\varphi\in C^1_{0}(\Omega_{0\alpha}\cup\Gamma_{\! 0})}
Q_{\Omega_{0\alpha}}(\varphi)\geq0
\end{equation}
or
\[\inf_{\varphi\in C^1_{0}(\Omega_{\alpha\beta}\cup\Gamma_{\! \beta})}Q_{\Omega_{\alpha\beta}}(\varphi)\geq0.\]
\end{lemma}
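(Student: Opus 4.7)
The plan is to mimic the splitting argument of Lemma~\ref{lem-1} in the bounded case, but phrased in terms of the Morse index rather than of a second eigenvalue. I would proceed by contradiction, assuming that both infima are strictly negative, i.e.\ that there exist test functions
$$
\varphi_1 \in C^1_{0}(\Omega_{0\alpha}\cup\Gamma_{\! 0}),
\qquad
\varphi_2 \in C^1_{0}(\Omega_{\alpha\beta}\cup\Gamma_{\! \beta}),
$$
satisfying $Q_{\Omega_{0\alpha}}(\varphi_1)<0$ and $Q_{\Omega_{\alpha\beta}}(\varphi_2)<0$. The goal is to produce from $\varphi_1$ and $\varphi_2$ a two-dimensional subspace of $C^1_{0}(\Omega_{0\beta}\cup\Gamma_{\! 0}\cup\Gamma_{\! \beta})$ on which $Q_{\Omega_{0\beta}}$ is negative definite, which directly contradicts $m(u)\le 1$.

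The first step is to extend $\varphi_1$ and $\varphi_2$ by zero to the whole of $\Omega_{0\beta}$. Here the precise meaning of $C^1_{0}$ is crucial: since $\mathop{\rm supp}\varphi_1$ is compact in the open set $\Omega_{0\alpha}\cup\Gamma_{\! 0}$, it is disjoint from both the flat face $\Gamma_{\!\alpha}$ and the curved part $\gamma_{0\alpha}$, so the trivial extension belongs to $C^1_{0}(\Omega_{0\beta}\cup\Gamma_{\! 0}\cup\Gamma_{\! \beta})$. The analogous observation for $\varphi_2$ shows that its extension lies in the same space, and the two extensions have disjoint supports (one contained in $\overline{\Omega_{0\alpha}}$, the other in $\overline{\Omega_{\alpha\beta}}$, sharing at most the set $\Gamma_{\!\alpha}$, which neither support reaches).

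The second step is the computation that mirrors the one in the bounded case: for every $\phi=a\varphi_1+b\varphi_2$ with $(a,b)\neq(0,0)$ the disjointness of supports gives
$$
Q_{\Omega_{0\beta}}(\phi)
=
a^2\,Q_{\Omega_{0\alpha}}(\varphi_1)+b^2\,Q_{\Omega_{\alpha\beta}}(\varphi_2)<0.
$$
Thus $X=\{a\varphi_1+b\varphi_2:a,b\in\mathbb R\}$ is a two-dimensional subspace of $C^1_{0}(\Omega_{0\beta}\cup\Gamma_{\! 0}\cup\Gamma_{\! \beta})$ on which $Q_{\Omega_{0\beta}}$ is strictly negative off zero. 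By the very definition of Morse index this forces $m(u)\ge 2$, contradicting the hypothesis $m(u)\le 1$. Consequently at least one of the two infima must be nonnegative, which is the claim.

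The only delicate point I foresee is the one already addressed above, namely the verification that extending the $\varphi_i$'s by zero across $\Gamma_{\!\alpha}$ keeps us inside the correct space of admissible test functions for the definition of $m(u)$. This is true because of the strict requirement that the supports be compact in $\Omega_{0\alpha}\cup\Gamma_{\!0}$ and $\Omega_{\alpha\beta}\cup\Gamma_{\!\beta}$ respectively, which in particular keeps them away from $\Gamma_{\!\alpha}$; one might worry about approximating by functions that merely vanish at the Dirichlet boundary, but since the definition of Morse index (and of the relevant infima) is phrased directly in $C^1_0$, no approximation argument is needed and the splitting is exact.
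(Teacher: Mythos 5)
Your proposal is correct and follows essentially the same route as the paper: argue by contradiction, extend the two test functions by zero (noting their compact supports keep them away from $\Gamma_{\!\alpha}$), and use the disjointness of supports to make $Q_{\Omega_{0\beta}}$ strictly negative on a two-dimensional subspace, contradicting $m(u)\le 1$. No gaps.
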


\begin{proof}
If the thesis is not true there exist two functions $\varphi_{0\alpha},\varphi_{\alpha\beta}$ supported in $\Omega_{0\alpha}\cup\Gamma_{\!0}$ and $\Omega_{\alpha\beta}\cup\Gamma_{\!\beta}$, respectively, such that $Q_{\Omega_{0\alpha}}(\varphi_{0\alpha}), Q_{\Omega_{\alpha\beta}}(\varphi_{\alpha\beta})<0$. Since they both vanish in a neighborhood of $\Gamma_{\!\alpha}$ we may extend them to the whole of $\Omega_{0\beta}$ as in Lemma \ref{lem-1}. The extended functions belong to $C^1_{0}(\Omega_{0\beta}\cup\Gamma_{\! 0} \cup \Gamma_{\! \beta})$ and are linearly independent. Then, as in the proof of Lemma \ref{lem-1} it is easy to prove that $Q_{\Omega_{0\beta}}(\phi)<0$ for every $\phi= a \, \varphi_{0\alpha} + b \, \varphi_{\alpha\beta}$ with $(a,b)\neq(0,0)$ contradicting the fact that the Morse index of $u$ in $\Omega_{0\beta}$ is at most one.
\end{proof}

Next we state a sign preservation property for $w_\frac\beta2$ which extends Corollary~\ref{cor-3} to unbounded domains.

\begin{lemma}[Sign preservation in the unbounded half-sector]\label{lem-4.2}
Assume $u \in \cal Y$ is a solution to~\eqref{P} with Morse index $m(u)\leq 1$. Assume further $f(r,z,u)$ is convex with respect to~$u$. If\/ $w_{\frac \beta 2}$ does not vanish identically in~$\Omega_{0\frac\beta2}$, then either\/ $w_{\frac \beta 2} > 0$ in~$\Omega_{0\frac\beta2}$ or $w_{\frac \beta 2} < 0$ there.
\end{lemma}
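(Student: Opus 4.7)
The plan is to mirror the bounded-case argument from Corollary~\ref{cor-3}, replacing the role of $\lambda_1 \ge 0$ in the half-sector by its unbounded analogue \eqref{4.1}, and replacing the sign preservation lemma (Lemma~\ref{sign}) by its unbounded counterpart (Lemma~\ref{sign-preservation}). The two main ingredients are already available: the splitting Lemma~\ref{lem-4.1} in unbounded domains, and the sign preservation Lemma~\ref{sign-preservation} applied with $K=\emptyset$.

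First I would check that $v:=w_{\frac\beta2}$ is an admissible object for Lemma~\ref{sign-preservation}. Since $u\in\cal Y$, the extension of $u$ through $\Gamma_{\!0}$ and $\Gamma_{\!\beta}$ performed in Section~\ref{se:2} produces a function of class $C^2$ through those interfaces, so $v=u\circ\sigma_{\frac\beta2}-u$ lies in $E_{\frac\beta2}$, i.e., $v\in C^2(\Omega_{0\frac\beta2})\cap C^1(\Omega_{0\frac\beta2}\cup\Gamma_{\!0}\cup\Gamma_{\!\frac\beta2})\cap C^0(\overline\Omega_{0\frac\beta2})$ with $\nabla v\in L^2$ and $v=0$ on~$\Gamma_{\!\frac\beta2}$. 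Moreover $v=0$ on $\gamma_{0\frac\beta2}$ (when $r_1>0$) and, from the Neumann condition $u_\theta=0$ on $\Gamma_{\!0}$, one has $\partial v/\partial\nu = -2u_\theta = 0$ on $\Gamma_{\!0}$. Next I would verify $L_u v \ge 0$ in $\Omega_{0\frac\beta2}$: using $-\Delta v = c_{\frac\beta2}v$ from~\eqref{eq:diff} and the convexity of $f(d(x),z,\cdot)$, one has
\[
c_{\frac\beta2}(x)\,v(x)
=
f(d(x),z,u(\sigma_{\frac\beta2}(x)))-f(d(x),z,u(x))
\ge
f'(d(x),z,u(x))\,v(x),
\]
so $L_u v = (c_{\frac\beta2}-f'(d(x),z,u))\,v \ge 0$ pointwise, hence weakly.

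Then I apply the splitting Lemma~\ref{lem-4.1} with $\alpha=\frac\beta2$. It yields two cases. \emph{Case 1:} $\inf_{\varphi\in C^1_0(\Omega_{0\frac\beta2}\cup\Gamma_{\!0})} Q_{\Omega_{0\frac\beta2}}(\varphi)\ge 0$. Then all the hypotheses of Lemma~\ref{sign-preservation} hold with $\alpha=\frac\beta2$, $K=\emptyset$, $D_\alpha=\Omega_{0\frac\beta2}$, $\tilde\Gamma_{\!0}=\Gamma_{\!0}$ and $v=w_{\frac\beta2}$: the energy hypothesis~\eqref{stable} is exactly \eqref{4.1}; $v=0$ on $\partial D_\alpha\setminus\Gamma_{\!0}$; and $\partial v/\partial\nu=0$ on $\tilde\Gamma_{\!0}$. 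Since $v\not\equiv 0$ by assumption, Lemma~\ref{sign-preservation} gives either $v>0$ or $v<0$ throughout $\Omega_{0\frac\beta2}$. \emph{Case 2:} $\inf_{\varphi\in C^1_0(\Omega_{\frac\beta2\beta}\cup\Gamma_{\!\beta})} Q_{\Omega_{\frac\beta2\beta}}(\varphi)\ge 0$. Here I would exploit the symmetry $w_{\frac\beta2}\circ\sigma_{\frac\beta2}=-w_{\frac\beta2}$: the isometry $\sigma_{\frac\beta2}$ maps $\Omega_{\frac\beta2\beta}$ onto $\Omega_{0\frac\beta2}$ and interchanges $\Gamma_{\!0}$ with $\Gamma_{\!\beta}$, so $\sigma_{\frac\beta2}$ transforms the hypotheses of Case~2 into those of Case~1 applied to the function $-v$, and the same conclusion follows.

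The main delicate point is not the overall logic (which is essentially that of Corollary~\ref{cor-3}) but the fact that Lemma~\ref{sign-preservation} really does apply to $v=w_{\frac\beta2}$ on the whole unbounded half-sector, in particular that $v\in E_{\frac\beta2}$. The summability $\nabla v\in L^2(\Omega_{0\frac\beta2})$ is what makes the cut-off argument inside Lemma~\ref{sign-preservation} work, and it is exactly this global control, rather than any pointwise decay, that replaces compactness of the domain; I expect that confirming this membership (together with correctly bookkeeping the Neumann boundary $\Gamma_{\!0}$ in the two symmetric cases) will be the only nontrivial part of writing out the proof.
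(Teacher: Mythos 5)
Your argument is correct and is essentially the paper's own proof, written out in more detail: Lemma~\ref{lem-4.1} with $\alpha=\tfrac\beta2$ reduces matters (via the reflection $\sigma_{\frac\beta2}$, which the paper hides in a ``we may suppose'') to the case \eqref{4.1b}, and then Lemma~\ref{sign-preservation} with $K=\emptyset$ applies to $v=w_{\frac\beta2}\in\fspace_{\frac\beta2}$ because the convexity of $f$ yields $L_u\,w_{\frac\beta2}\ge 0$. The only (harmless) slip is the identity $\partial w_{\frac\beta2}/\partial\nu=-2u_\theta$ on $\Gamma_{\!0}$: there the correct computation is $\partial w_{\frac\beta2}/\partial\theta=-u_\theta\circ\sigma_{\frac\beta2}-u_\theta$, which vanishes because $u_\theta=0$ on both $\Gamma_{\!0}$ and $\Gamma_{\!\beta}$, so the boundary condition you need is unaffected.
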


\begin{proof}
Letting $\alpha = \frac\beta2$ in Lemma \ref{lem-4.1}, we may suppose
\begin{equation}\label{4.1b}
\inf_{\varphi\in C^1_{0}(\Omega_{0\frac\beta2}\cup \Gamma_{\! 0})}
Q_{\Omega_{0\frac\beta2}}(\varphi)\geq0
.
\end{equation}
Thus, assumption~\eqref{stable} is satisfied, and the claim follows from Lemma~\ref{sign-preservation} with $K = \emptyset$, since, by the convexity of $f$, $w_{\frac \beta 2}$ satisfies $L_u w_{\frac \beta 2}\geq 0$.
\end{proof}

We can now prove the following counterpart of Lemma~\ref{lem-3} for unbounded domains:

\begin{lemma}\label{lem-4.3}
Assume $u \in \cal Y$ is a solution to \eqref{P} with Morse index $m(u)\leq 1$.
If\/ $w_\frac \beta2\equiv 0$ in\/~$\Omega_{0\beta}$ then $u$ is constant with respect to\/~$\theta$.
\end{lemma}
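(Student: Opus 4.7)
The plan is to mimic the proof of Lemma~\ref{lem-3}, replacing its spectral step (where $u_\theta$ is characterized as a constrained minimizer and so inherits a Neumann condition) by a truncation/Cauchy--Schwarz argument based on Proposition~\ref{small} and Lemma~\ref{lem-4.1}. I argue by contradiction, assuming $u_\theta\not\equiv 0$ in~$\Omega_{0\beta}$.

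From $w_{\frac\beta2}\equiv 0$ it follows that $u(\sigma_{\frac\beta2}(x))=u(x)$, so $u_\theta$ is odd across $\Gamma_{\!\frac\beta2}$ and vanishes there; together with~\eqref{P} this gives $u_\theta=0$ on $\gamma_{0\beta}\cup\Gamma_{\!0}\cup\Gamma_{\!\beta}\cup\Gamma_{\!\frac\beta2}$, while the analogue of Proposition~\ref{solution} makes $u_\theta$ a classical solution of $L_u u_\theta=0$. I now apply Lemma~\ref{lem-4.1} with $\alpha=\frac\beta2$: since $u$ is symmetric across $\Gamma_{\!\frac\beta2}$, reflection takes any hypothetical bad test function on one half-sector to a bad test function on the other, and the two together would contradict $m(u)\le 1$ exactly as in the proof of Lemma~\ref{lem-4.1}. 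Hence both alternatives in the splitting lemma hold:
\[
\inf_{\varphi\in C^1_0(\Omega_{0\frac\beta2}\cup\Gamma_{\!0})}Q_{\Omega_{0\frac\beta2}}(\varphi)\ge 0,\qquad
\inf_{\varphi\in C^1_0(\Omega_{\frac\beta2\beta}\cup\Gamma_{\!\beta})}Q_{\Omega_{\frac\beta2\beta}}(\varphi)\ge 0,
\]
so the bilinear form associated to $Q$ is positive semidefinite (and satisfies Cauchy--Schwarz) on the $H^1$-closure of each of these test spaces.

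The crux is to derive the Neumann condition $\partial u_\theta/\partial\nu = 0$ on~$\Gamma_{\!0}$. Pick any $\psi\in C^1_0(\Omega_{0\frac\beta2}\cup\Gamma_{\!0})$ and choose $\delta>0$, $R>r_1$ so that $\zeta_\delta\xi_R\equiv 1$ on $\mathop{\rm supp}\psi$. Integration by parts, using $L_u u_\theta=0$ in the interior and the vanishing of $\psi$ on $\gamma_{0\frac\beta2}\cup\Gamma_{\!\frac\beta2}$, yields
\[
\int_{\Omega_{0\frac\beta2}}\bigl(\nabla(\zeta_\delta\xi_R u_\theta)\,\nabla\psi - f'\,\zeta_\delta\xi_R u_\theta\,\psi\bigr)\,dx \;=\; \int_{\Gamma_{\!0}}\psi\,\frac{\partial u_\theta}{\partial\nu}\,d\Sigma.
\]
Because $u_\theta$ is odd under $\sigma_{\frac\beta2}$ while the cut-offs $\zeta_\delta,\xi_R$ are rotation-invariant, $(\zeta_\delta\xi_R u_\theta)^2$ and $|\nabla(\zeta_\delta\xi_R u_\theta)|^2$ are even, so $Q_{\Omega_{0\frac\beta2}}(\zeta_\delta\xi_R u_\theta) = \tfrac12\,Q_{\Omega_{0\beta}}(\zeta_\delta\xi_R u_\theta)$, which by Proposition~\ref{small} tends to $0$ as $\delta\to 0$ and $R\to+\infty$. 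The Cauchy--Schwarz inequality then forces the boundary integral above to vanish for every admissible $\psi$, whence $\partial u_\theta/\partial\nu = 0$ on~$\Gamma_{\!0}$; the symmetric argument delivers the same on~$\Gamma_{\!\beta}$.

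Finally, I would extend $u$ to~$\Omega_{-\beta,\beta}$ by the reflection $u(r,-\theta,z):=u(r,\theta,z)$ (permissible because $u$ satisfies Neumann on~$\Gamma_{\!0}$ and $f$ is independent of~$\theta$) and set $\tilde u_\theta = u_\theta$ in $\Omega_{0\beta}$, $\tilde u_\theta = 0$ in~$\Omega_{-\beta,0}$. The matching Dirichlet and Neumann conditions on~$\Gamma_{\!0}$ ensure that $\tilde u_\theta$ is a weak solution of $L_u\tilde u_\theta = 0$ in the enlarged domain, and since $\tilde u_\theta$ vanishes identically on the open set~$\Omega_{-\beta,0}$, the unique continuation property~\cite[p.~519]{Simon} forces $\tilde u_\theta\equiv 0$, contradicting $u_\theta\not\equiv 0$. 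The subtlest point, which will require careful justification, is the verification that the truncated function $\zeta_\delta\xi_R u_\theta$ belongs to the $H^1$-closure of $C^1_0(\Omega_{0\frac\beta2}\cup\Gamma_{\!0})$, so that the semidefinite Cauchy--Schwarz inequality (initially available only on smooth compactly supported functions) can be applied to it together with the arbitrary test function~$\psi$.
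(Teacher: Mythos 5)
Your proposal is correct and follows the same overall architecture as the paper's proof (truncate $u_\theta$, use the Cauchy--Schwarz inequality for the positive semidefinite form to squeeze the boundary integral $\int_{\Gamma_{\!0}}\psi\,\partial u_\theta/\partial\nu$ to zero, deduce the Neumann condition, extend by zero and invoke unique continuation), but the device you use to obtain the required semidefiniteness is genuinely different. The paper works on the full sector with the subspace of \emph{odd} test functions: given an odd $\varphi_1$ with $Q_{\Omega_{0\beta}}(\varphi_1)<0$ it pairs it with $\varphi_2=|\varphi_1|$, computes $Q_{\Omega_{0\beta}}(a\varphi_1+b\varphi_2)=(a^2+b^2)\,Q_{\Omega_{0\beta}}(\varphi_1)$ and contradicts $m(u)\le 1$; the truncated function $\zeta_\delta\,\xi_R\,u_\theta$ is then itself an admissible odd test function, so no completion argument is needed there. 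You instead observe that the symmetry of $u$ forces \emph{both} alternatives of Lemma~\ref{lem-4.1} to hold at $\alpha=\frac\beta2$ (a bad test function on one half-sector reflects, via the $\sigma_{\frac\beta2}$-invariance of $f'(d(x),z,u)$, to one on the other, and the disjoint supports yield a two-dimensional negative subspace), and then work entirely on $\Omega_{0\frac\beta2}$ using the identity $Q_{\Omega_{0\frac\beta2}}(\zeta_\delta\,\xi_R\,u_\theta)=\tfrac12\,Q_{\Omega_{0\beta}}(\zeta_\delta\,\xi_R\,u_\theta)$ together with Proposition~\ref{small}. The two semidefiniteness statements are essentially equivalent (odd functions on $\Omega_{0\beta}$ correspond to functions on $\Omega_{0\frac\beta2}$ vanishing on $\Gamma_{\!\frac\beta2}$), but your route pays the small price you correctly flag: $\zeta_\delta\,\xi_R\,u_\theta$ vanishes \emph{on} $\Gamma_{\!\frac\beta2}$ but not in a neighborhood of it, so it is not in $C^1_0(\Omega_{0\frac\beta2}\cup\Gamma_{\!0})$ and you must pass to the $H^1$-closure. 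This is a standard density fact (cut off at distance $\varepsilon$ from $\Gamma_{\!\frac\beta2}$ and use that $|u_\theta|$ is dominated by the distance to $\Gamma_{\!\frac\beta2}$ on the compact support), and the paper itself relies on the analogous completion elsewhere (e.g.\ in Lemma~\ref{sign-preservation}), so the gap is fillable. The remaining steps — choosing $\delta,R$ so that $\zeta_\delta\,\xi_R\equiv1$ near $\mathop{\rm supp}\psi$, the integration by parts producing only the $\Gamma_{\!0}$ term, and the final extension plus unique continuation — match the paper's Part~III and Conclusion.
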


\begin{proof}
Since $w_\frac \beta2\equiv 0$ in\/ $\Omega_{0\beta}$, the solution $u$ is symmetric about $\Gamma_{\!\frac\beta2}$, as well as $f'(d(x),z,u)$, while the derivative $u_\theta$ is odd with respect to~$\Gamma_{\!\frac\beta2}$ in the sense that $u_\theta(x) = -u_\theta(\sigma_{\frac \beta 2}(x))$ in~$\Omega_{0\beta}$. We already know that $u_\theta$ satisfies the equation in~\eqref{solution} and vanishes on~$\Gamma_{\! 0} \cup \Gamma_{\! \beta}$. Our goal is to prove that $\partial u_\theta / \partial \nu$ also vanishes there.

\textit{Part I.} We start by showing that $Q_{\Omega_{0\beta}}(\varphi_1) \ge 0$ for every $\varphi_1$ in the vector space $C^{1,\rm odd}_{0}( \Omega_{0\beta} \allowbreak \cup\Gamma_{\! 0} \cup \Gamma_{\! \beta})$
 of the odd elements
 of $C^1_{0}(\Omega_{0\beta}\cup\Gamma_{\! 0} \cup \Gamma_{\! \beta})$. In fact, if $\varphi_1$ vanishes identically the claim is obvious. Otherwise we consider  the absolute value $\varphi_2(x) = |\varphi_1(x)|$, which is symmetric about~$\Gamma_{\!\frac\beta2}$ and therefore
$$
\int_{\Omega_{0\beta}}
f'(d(x),z,u) \, \varphi_1 \, \varphi_2 \, dx
=
0
.
$$
Moreover, we have $\nabla \varphi_2 = \nabla \varphi_1$ in the set $\Omega^+_{0\beta} = \{\, x \in \Omega_{0\beta} : \varphi_1(x) > 0 \,\}$, and  $\nabla \varphi_2 = -\nabla \varphi_1$ in $\Omega^-_{0\beta} = \{\, x \in \Omega_{0\beta} : \varphi_1(x) < 0 \,\}$. Since $\Omega^-_{0\beta} = \sigma_{\frac \beta 2}(\Omega^+_{0\beta})$ by the oddity of~$\varphi_1$, we find
$$
\int_{\Omega_{0\beta}}
\nabla \varphi_1 \, \nabla \varphi_2 \, dx
=
\int_{\Omega^+_{0\beta}}
|\nabla \varphi_1|^2 \, dx
-
\int_{\Omega^-_{0\beta}}
|\nabla \varphi_1|^2 \, dx
=
0
.
$$
Thus, for every $(a,b) \ne (0,0)$ we may compute
\begin{align*}
Q_{\Omega_{0\beta}}(a \, \varphi_1 + b \, \varphi_2)
&=
\int_{\Omega_{0\beta}}
\Big(
|a \, \nabla \varphi_1 + b \, \nabla \varphi_2|^2
- (a \, \varphi_1 + b \, \varphi_2)^2 \, f'(d(x),z,u)
\Big)
\, dx
\\
\noalign{\medskip}
&=
a^2 \, Q_{\Omega_{0\beta}}(\varphi_1)
+
b^2 \, Q_{\Omega_{0\beta}}(\varphi_2)
=
(a^2 + b^2) \, Q_{\Omega_{0\beta}}(\varphi_1)
.
\end{align*}
Since $\varphi_1,\varphi_2$ are linearly independent, and since $m(u) \le 1$ by assumption, we must have $Q_{\Omega_{0\beta}}(\varphi_1) \ge 0$, as claimed.

\textit{Part II.} We claim that $\B(u_\theta,\psi) = 0$ for every $\psi \in C^{1,\rm odd}_{0}( \Omega_{0\beta}\cup\Gamma_{\! 0} \cup \Gamma_{\! \beta})$, which is not obvious because $\psi$ is allowed to take nonzero values on $\Gamma_{\! 0} \cup \Gamma_{\! \beta}$. However, by Part~I the bilinear form $\B$ is positive semidefinite in $C^{1,\rm odd}_{0}( \Omega_{0\beta}\cup\Gamma_{\! 0} \cup \Gamma_{\! \beta}) \times C^{1,\rm odd}_{0}( \Omega_{0\beta}\cup\Gamma_{\! 0} \cup \Gamma_{\! \beta})$ and therefore the Cauchy-Schwarz inequality~\eqref{cs0} holds. Without loss of generality we may assume that the cut-off functions $\xi_R(x)$ are symmetric with respect to~$\Gamma_{\!\frac\beta2}$: thus, $\xi_R(x) \, u_\theta(x)$ is odd and has a compact support. Furthermore, since $f'(r,z,u)$ is locally H\"older continuous and $u \in \cal Y$, $u_\theta$ is in $C^1(\Omega_{0\beta} \cup \Gamma_{\! 0} \cup \Gamma_{\! \beta}) \cap C^0(\overline \Omega_{0\beta}\setminus \Upsilon)$: however, its gradient may fail to belong to $L^2(\Omega_{0\beta} \cap B_{2R})$, due to the possible singularity at $d(x) = r_1$. 
To overcome this difficulty, we use the cut-off functions $\zeta_\delta(x)$ introduced in~\eqref{zeta}. The doubly truncated function $\zeta_\delta \, \xi_R \, u_\theta$ is still odd, has a compact support, and belongs to
$C^{1,\rm odd}_{0}( \Omega_{0\beta}\cup\Gamma_{\! 0} \cup \Gamma_{\! \beta})$. Thus, we may write
$$
0 \le \big( \B(\zeta_\delta \, \xi_R \, u_\theta, \, \psi) \big)^2
\leq
Q_{\Omega_{0\beta}}(\zeta_\delta \, \xi_R \, u_\theta)
\,
Q_{\Omega_{0\beta}}(\psi)
,
$$
where $\psi \in C^{1,\rm odd}_{0}( \Omega_{0\beta}\cup\Gamma_{\! 0} \cup \Gamma_{\! \beta})$. Keeping $\psi$ fixed, we take $\delta$ small and $R$ large to achieve $\zeta_\delta \, \xi_R = 1$ on the set $\mathop{\rm supp} \psi$. Consequently, the preceding inequalities reduce to
$$
0 \le \big( \B(u_\theta, \, \psi) \big)^2
\leq
Q_{\Omega_{0\beta}}(\zeta_\delta \, \xi_R \, u_\theta)
\,
Q_{\Omega_{0\beta}}(\psi)
.
$$
By Proposition~\ref{small}, the term $Q_{\Omega_{0\beta}}(\zeta_\delta \, \xi_R \, u_\theta)$ in the right-hand side becomes arbitrarily small by a convenient choice of $R$ and~$\delta$, which implies $\B(u_\theta, \, \psi) = 0$.

\textit{Part III.} Let us verify that $u_\theta$ satisfies the Neumann condition on $\Gamma_{\! 0}$. 
Recall that $\B(u_\theta, \, \psi) = 0$ by Part II, and take $R > r_1$ such that $\mathop{\rm supp} \psi \subset B_R$. Multiplying the equation $L_u \, u_\theta = 0$ by $\psi\in C^{1,\rm odd}_{0}( \Omega_{0\beta}\cup\Gamma_{\! 0} \cup \Gamma_{\! \beta})$ and integrating by parts over $\Omega_{0\beta} \cap B_R$ we obtain
$$
0=\B(u_\theta,\psi)
=
\int_{\Gamma_{\! 0} \cup \Gamma_{\! \beta}}
\psi \, \frac{\, \partial u_\theta \,}{\partial \nu} \, d\Sigma=2\int_{\Gamma_{\! 0}}
\psi \, \frac{\, \partial u_\theta \,}{\partial \nu} \, d\Sigma
,
$$
where the equality holds because $\psi$ vanishes in a neighborhood of~$\gamma_{0\beta}$ as well as on $\Omega_{0\beta} \cap \partial B_R$ and $\psi \, \frac{\, \partial u_\theta \,}{\partial \nu}$ is symmetric with respect to $\Gamma_{\!\frac\beta2}$. Since $\psi$ can take any value on~$\Gamma_{\! 0}$ then $ \frac{\, \partial u_\theta \,}{\partial \nu}=0$ on $\Gamma_0$ as claimed.

\goodbreak\textit{Conclusion.} The derivative $u_\theta$ can be extended to zero in the domain $\Omega_{-\frac\beta2,0}$ (cf.~\eqref{extension}), thus obtaining a $C^1$ weak solution of $L_u \, u_\theta = 0$ in $\Omega_{-\frac\beta2,\frac\beta2}$. By the unique continuation property, $u_\theta$ must vanish identically, and therefore $u$ is constant with respect to~$\theta$. 
\end{proof}

When the domain $\Omega_{0\beta}$ is unbounded, in place of Lemma~\ref{lem-4} we have:

\begin{lemma}
Assume $u \in \cal Y$ is a solution to~\eqref{P} with Morse index $m(u)\leq 1$. Assume further that \eqref{4.1b} holds. If\/ $w_{\frac \beta 2}(x)>0$ in~$\Omega_{0\frac \beta 2}$ then
\begin{equation}\label{u_theta>0-bis}
u_\theta > 0\ \ \text{in\/ } \Omega_{0\frac \beta 2}\cup \Gamma_{\!\frac \beta 2}.
\end{equation}
\end{lemma}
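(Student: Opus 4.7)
The plan is to follow the strategy of Lemma~\ref{lem-4} in two steps. First I would establish $u_\theta > 0$ on $\Gamma_{\!\frac\beta2}$ by a Hopf boundary-point argument on $w_{\frac\beta2}$; then I would propagate this positivity to the whole half-sector $\Omega_{0\frac\beta2}$ by running a truncation argument on $u_\theta^-$ in the spirit of Lemma~\ref{sign-preservation}. The stability hypothesis~\eqref{4.1b} will play here the role that the eigenvalue condition played in the bounded case, namely to provide the Cauchy--Schwarz inequality~\eqref{cs0}.

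For the first step, by~\eqref{eq:diff} the function $w_{\frac\beta2}$ is a strictly positive solution of a linear equation with bounded coefficient in $\Omega_{0\frac\beta2}$, vanishing on the smooth flat surface $\Gamma_{\!\frac\beta2}$. Hopf's lemma gives $\partial w_{\frac\beta2}/\partial\nu < 0$ along $\Gamma_{\!\frac\beta2}$, and the purely algebraic identity~\eqref{negative} converts this into $u_\theta > 0$ on $\Gamma_{\!\frac\beta2}$.

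For the second step, set $v := u_\theta^- = \min\{u_\theta,0\}$. The Dirichlet and Neumann conditions on $u_\theta$ yield $u_\theta = 0$ on $\gamma_{0\frac\beta2} \cup \Gamma_{\!0}$, and the preceding step gives $u_\theta > 0$ on $\Gamma_{\!\frac\beta2}$; hence $v \equiv 0$ on the entire boundary of $\Omega_{0\frac\beta2}$. For a test function $\psi \in C^\infty_0(\Omega_{0\frac\beta2})$, I would choose cutoffs $\xi_R, \zeta_\delta$ from~\eqref{gradient}--\eqref{zeta} with $\xi_R\,\zeta_\delta \equiv 1$ on $\mathop{\rm supp}\psi$. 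Since $\xi_R\,\zeta_\delta\,v$ is Lipschitz, compactly supported away from both infinity and $\{d(x) = r_1\}$, and vanishes on $\Gamma_{\!\frac\beta2}$, it belongs to the $H^1$-completion of $C^1_0(\Omega_{0\frac\beta2} \cup \Gamma_{\!0})$, on which by~\eqref{4.1b} the bilinear form $\B$ is positive semidefinite. The Cauchy--Schwarz inequality then gives
$$
\bigl(\B(v,\psi)\bigr)^2
=
\bigl(\B(\xi_R\,\zeta_\delta\,v, \psi)\bigr)^2
\le
Q_{\Omega_{0\frac\beta2}}(\xi_R\,\zeta_\delta\,v)\,Q_{\Omega_{0\frac\beta2}}(\psi).
$$
The second claim of Proposition~\ref{small}, tailored exactly to the case $u_\theta^- = 0$ on $\Gamma_{\!\frac\beta2}$, shows that the first factor on the right can be made arbitrarily small by a suitable choice of $R$ and $\delta$. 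Hence $\B(v,\psi) = 0$ for every $\psi$, so $v$ is a weak solution of $L_u v = 0$ in $\Omega_{0\frac\beta2}$.

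To finish, the unique continuation property rules out $v < 0$ throughout $\Omega_{0\frac\beta2}$ (which would contradict $u_\theta > 0$ on $\Gamma_{\!\frac\beta2}$ by continuity up to the boundary), so $v \equiv 0$ and $u_\theta \ge 0$ in $\Omega_{0\frac\beta2}$. The strong maximum principle applied to $u_\theta$, a nonnegative classical solution of $L_u u_\theta = 0$ that is positive on $\Gamma_{\!\frac\beta2}$, upgrades this to $u_\theta > 0$ throughout $\Omega_{0\frac\beta2}$, yielding~\eqref{u_theta>0-bis}. The main technical obstacle will be the one already overcome in Proposition~\ref{small}: because $\nabla u_\theta$ need not be globally $L^2$ and $u$ may fail to be $W^{2,2}$ near $\{d(x) = r_1\}$, one must truncate simultaneously near infinity (via $\xi_R$) and near the potentially singular set (via $\zeta_\delta$), arranging that the quadratic-form penalty vanishes in the limit while still having $\xi_R\,\zeta_\delta \equiv 1$ on the support of each test function.
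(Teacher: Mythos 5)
Your proposal is correct and follows essentially the same route as the paper: Hopf's lemma applied to $w_{\frac\beta2}$ via \eqref{negative} gives $u_\theta>0$ on $\Gamma_{\!\frac\beta2}$, and the double truncation $\zeta_\delta\,\xi_R\,u_\theta^-$ combined with \eqref{4.1b}, the Cauchy--Schwarz inequality \eqref{cs0} and Proposition~\ref{small} shows $u_\theta^-$ is a weak solution of $L_u\,u_\theta^-=0$, forcing $u_\theta^-\equiv 0$. The only cosmetic difference is in the last step, where you invoke the strong maximum principle while the paper uses unique continuation once more; both are valid here.
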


\begin{proof}
Let us check that the negative part $u_\theta^- = \min\{\, 0,u_\theta \,\} \le 0$ is a weak solution of $L_u \, u_\theta^-=0$ in $\Omega_{0\frac \beta 2}$, i.e., $\B(u_\theta^-,\psi) = 0$ for all $\psi \in  C^\infty_0(\Omega_{0\frac\beta2})$. Assumption~\eqref{4.1b} implies that the bilinear form $\B$ is positive semidefinite on $C^1_{0}(\Omega_{0\frac\beta2}\cup \Gamma_{\! 0}) \times C^1_{0}(\Omega_{0\frac\beta2}\cup\Gamma_{\! 0})$. Hence we pick $\psi \in C^\infty_0(\Omega_{0\frac\beta2})$ and consequently $R > r_1$ such that $\mathop{\rm supp} \psi \subset B_R$, and also $\delta > 0$ such that
\begin{equation}\label{delta}
\inf_{x \in \mathop{\rm supp} \psi} d(x) > r_1 + \delta
.
\end{equation}
As in the proof of Lemma~\ref{lem-4}, the assumption $w_\frac\beta2 > 0$ in~$\Omega_{0\frac\beta2}$ implies $u_\theta > 0$ on~$\Gamma_{\!\frac\beta2}$ (see~\eqref{negative}), hence $u_\theta^- = 0$ on $\gamma_{0\frac\beta2} \cup \Gamma_{\!0} \cup \Gamma_{\!\frac\beta2}$. Furthermore, the doubly truncated function $\zeta_\delta \, \xi_R \, u_\theta^-$ has a square-summable gradient and therefore belongs to $H^1_0(D)$, where $D = \Omega_{0\frac\beta2} \cap B_{2R}$. The Cauchy-Schwarz inequality~\eqref{cs0} yields
$$
0
\le
\big( \B(u_\theta^-, \psi) \big)^2
=
\big( \B(\zeta_\delta \, \xi_R \, u_\theta^-, \, \psi) \big)^2
\le
Q_{\Omega_{0\frac\beta2}}(\zeta_\delta \, \xi_R \, u_\theta^-) \, Q_{\Omega_{0\frac\beta2}}(\psi)
.
$$
By Proposition~\ref{small}, the term $Q_{\Omega_{0\frac\beta2}}(\zeta_\delta \, \xi_R \, u_\theta^-)$ becomes arbitrarily small by a convenient choice of $R$ and~$\delta$, which implies $\B(u_\theta^-,\psi) = 0$ and therefore $u_\theta^-$ is a weak solution of $L_u \, u_\theta^- = 0$ in~$\Omega_{0\frac\beta2}$, as claimed. By the unique continuation property, this implies that either $u_\theta^-\equiv 0$ or $u_\theta^-<0$ in~$\Omega_{0\frac \beta 2}$. But since $u_\theta>0$ on~$\Gamma_{\! \frac\beta2}$, we must have $u_\theta^-\equiv 0$ and therefore $u_\theta \ge 0$ in~$\Omega_{0\frac\beta2}$. The unique continuation property of~$u_\theta$ proves~\eqref{u_theta>0-bis}.
\end{proof}

\section{Proof of Theorem \ref{teo-2}}\label{se:10}

\begin{proof}[Proof of Theorem \ref{teo-2}]
If $w_\frac \beta2 \equiv 0$, then $u$ is independent of~$\theta$ by Lemma~\ref{lem-4.3}. If, instead, $w_\frac \beta2\not \equiv 0$, then Lemma~\ref{lem-4.2} implies that either $w_\frac \beta2>0$ in $\Omega_{0\frac\beta2}$ or $w_\frac \beta2>0$ in $\Omega_{\frac\beta2\beta}$. By Lemma \ref{lem-4.1} we may assume that \eqref{4.1} holds and $w_\frac \beta2>0$ in $\Omega_{0\frac\beta2}$. We then define the set $A'$ as in the proof of Theorem \ref{teo-1} and, arguing as in Part~I with \eqref{u_theta>0-bis} instead of \eqref{u_theta>0}, we get that $A'$ contains~$\frac \beta2$. The fact that $A'$ is a closed subinterval of $\left[\frac \beta 2,\beta\right]$ follows as in Part~II. Next, exactly as in Part~III, the Hopf boundary point lemma implies that $u_\theta>0$ in $\Omega_{0,\tilde\alpha}$ if $A'=\left[\frac\beta 2,\tilde\alpha\right]$, $\tilde \alpha < \beta$. In such a case we can repeat exactly Part~IV getting that
$$
\mbox{$w_{\tilde\alpha}>0$ in $\Omega_{0\tilde\alpha}$.}
$$
If the Morse index $m(u)$ is~$1$, there must be $\varphi_1 \in C^1_{0}(\Omega_{0\beta}\cup\Gamma_{\! 0} \cup \Gamma_{\! \beta})$ such that $Q_{\Omega_{0\beta}}(\varphi_1) < 0$ and we may take a torus~$T$ containing $\mathop{\rm supp} \varphi_1$. By a torus we mean an open subset $T \subset \mathbb R^N$, invariant under cylindrical rotations, whose intersection with the hyperplane $x_1 = 0$ is an $N-1$ dimensional ball lying in the half-plane $x_1 = 0$, $x_2 > r_1 + \varepsilon$ for some $\varepsilon > 0$. The last condition implies that $d(x) \ge r_1 + \varepsilon > r_1$ for all $x \in \overline T$. In the case when $m(u) = 0$ we take any torus~$T$ as defined above. Note that when $N = 2$ the tori are just annuli. Since $w_{\tilde\alpha}$ and $u_\theta$ are positive in~$\Omega_{0\tilde\alpha}$, we obviously have
\begin{equation}\label{obvious}
\mbox{$w_{\tilde\alpha}, u_\theta > 0$ in~$\Omega_{0\tilde\alpha} \cap \overline T$.}
\end{equation}

\textit{Part V.} By the same argument used to derive~\eqref{still} and~\eqref{complete} we see that $w_\alpha > 0$ on~$\Gamma_{\!0}$ for every $\alpha \in (0,\beta)$.

\textit{Part VI.} We claim that $w_\alpha$ keeps positive in $G_\alpha = (\Omega_{0\alpha} \cup {\Gamma_{\!0}}) \cap \overline T$ for every $\alpha > \tilde\alpha$ such that the difference $\alpha - \tilde\alpha$ is sufficiently small. This can be proved by contradiction: suppose that there exists a sequence $\alpha_n \searrow \tilde\alpha$ such that $w_{\alpha_n}$ becomes negative somewhere in $G_{\alpha_n}$, and let $x_n \in \overline \Omega_{0\alpha_n} \cap \overline T$
be a point where $w_{\alpha_n}$ takes its negative minimum. Of course, $x_n \not \in \Gamma_{\!\alpha_n}$ because $w_{\alpha_n} = 0$ there, and $x_n \not \in \Gamma_{\!0}$ by Part~V. By compactness there exists a subsequence, still denoted by $(x_n)$, converging to some $x_0 \in \overline \Omega_{0\alpha} \cap \overline T$ with $w_{\tilde\alpha}(x_0) = 0$. Since $w_{\tilde\alpha} > 0$ on $\Gamma_{\!0}$ by Part~V, and by~\eqref{obvious}, we must have
\begin{equation}\label{location}
\mbox{$x_0 \in \Gamma_{\tilde\alpha} \cap \overline T$.}
\end{equation}
Furthermore, the gradient $\nabla w_{\alpha_n}$ vanishes at~$x_n$ in the case when $x_n \in \Omega_{0\alpha_n} \cap T$, and is directed towards the interior of~$T$ if $x_n \in \Omega_{0\alpha_n} \cap \partial T$. Hence, we may write
$$
\mbox{$\partial w_{\alpha_n} / \partial \theta = 0$ at~$x_n$ for every~$n$.}
$$
Passing to the limit, this property should be inherited by $\partial w_{\tilde\alpha} / \partial \theta$ at~$x_0$. However, by~\eqref{location} and by the Hopf boundary point lemma we have $\partial w_{\tilde\alpha} / \partial \theta < 0$ at~$x_0$, a contradiction. Hence, $w_\alpha$ must be positive in $G_\alpha$ for every $\alpha > \tilde\alpha$ such that the difference $\alpha - \tilde\alpha$ is sufficiently small, as claimed.

\textit{Part VII.} For every $\alpha$ as above, $w_\alpha$ is in fact positive in all of $\Omega_{0\alpha}$. To see this, we let $K = \overline T$ in Lemma~\ref{sign-preservation}, hence $D_\alpha = \Omega_{0\alpha} \setminus \overline T$. The quadratic form $Q_{\Omega_{0\alpha}}$ is positive semidefinite in $C^1_{0}(\Omega_{0\alpha} \cup \Gamma_{\! 0} \cup \Gamma_{\! \alpha} \setminus \overline T)$ by Remark~\ref{stability}, and keeps positive semidefinite in $H^1_{\gamma_{0\alpha} \cup \partial T \cup \partial B_R}(D_\alpha \cap B_R)$ for $R > \max_{x \in \overline T} d(x)$. Taking into account the boundary conditions $w_\alpha > 0$ on $(\Omega_{0\alpha} \cap \partial T) \cup \Gamma_{\!0}$ and $w_\alpha = 0$ on $\gamma_{0\alpha} \cup \Gamma_{\!\alpha}$, by Lemma~\ref{sign-preservation} it follows that $w_\alpha$ is positive in any connected component of $D_\alpha$ and consequently in $\Omega_{0\alpha}$.

\textit{Conclusion.} We have thus seen that if we suppose $\tilde\alpha < \beta$ we obtain the positivity of~$w_\alpha$ in~$\Omega_{0\alpha}$ for every $\alpha > \tilde\alpha$ such that the difference $\alpha - \tilde\alpha$ is sufficiently small, contradicting the definition of~$\tilde\alpha$. Hence, under the assumption that $w_\frac\beta2 > 0$ in~$~\Omega_{0\frac\beta2}$, we must have $\tilde\alpha = \beta$ and therefore the solution $u$ is strictly increasing with respect to~$\theta$ in~$\Omega_{0\beta}$. In this case, by~Part~III we also have~$u_\theta > 0$ in~$\Omega_{0\beta}$ and we can apply Lemma 2.1 in \cite{gladiali-pacella-weth} in $\Omega_{0\beta}$ getting that $\inf Q_{\Omega_{0\beta}} (\psi)\geq 0$ when $\psi\in C^1_0(\Omega_{0\beta})$.
By Proposition~\ref{small}, the term $Q_{\Omega_{0\beta}}(\zeta_\delta \, \xi_R \, u_\theta)$ becomes arbitrarily small by a convenient choice of $R$ and~$\delta$ showing that $\inf_{\psi \in C^1_0(\Omega_{0\beta})} Q_{\Omega_{0\beta}} (\psi)= 0$.
 In summary,  we have shown that if $m(u) \le 1$ then either $u_\theta$ vanishes identically in~$\Omega_{0\beta}$ or it keeps its sign there. Now, in order to prove Claim~1 of the statement, it suffices to check that if $m(u) = 0$ then $u_\theta$ vanishes identically. To this purpose, take $\psi \in C^1_{0}( \Omega_{0\beta}\cup\Gamma_{\! 0} \cup \Gamma_{\! \beta})$ and consider the doubly truncated function $\zeta_\delta \, \xi_R \, u_\theta$ with $R$ such that $\mathop{\rm supp} \psi \subset B_R$ and $\delta$ satisfying~\eqref{delta}. Since $Q_{\Omega_{0\beta}}$ is positive semidefinite by assumption, and since $\zeta_\delta \, \xi_R \, u_\theta$ belongs to $H^1_{\gamma_{0\beta} \cup \partial B_{2R}}(D)$, where $D = \Omega_{0\beta} \cap B_{2R}$, the Cauchy-Schwarz inequality yields
$$
0
\le
\big( \B(u_\theta, \, \psi) \big)^2
=
\big( \B(\zeta_\delta \, \xi_R \, u_\theta, \, \psi) \big)^2
\le
Q_{\Omega_{0\beta}}(\zeta_\delta \, \xi_R \, u_\theta) \, Q_{\Omega_{0\beta}}(\psi)
.
$$
By Proposition~\ref{small}, the term $Q_{\Omega_{0\beta}}(\zeta_\delta \, \xi_R \, u_\theta)$ becomes arbitrarily small by a convenient choice of $R$ and~$\delta$. On the other side, multiplying the equation $L_u \, u_\theta = 0$ by $\psi$ and integrating by parts we obtain
$$
{\cal B}_{\Omega_{0\beta}}(u_\theta,\psi)
=
\int_{\Gamma_{\!0} \cup \Gamma_{\!\beta}}
\psi \, \frac{\, \partial u_\theta \,}{\, \partial \nu \,}
\, d\Sigma
,
$$
hence the integral must vanish. Since $\psi$ can take any value on $\Gamma_{\!0} \cup \Gamma_{\!\beta}$, it follows that $\partial u_\theta / \partial \nu = 0$ there, whence $u_\theta$ can be extended by zero to $\Omega_{-\beta,0}$ as in~\eqref{extension}, thus obtaining a $C^1$ weak solution $\tilde u_\theta$ of $L_u \, \tilde u_\theta = 0$ in $\Omega_{-\beta,\beta}$. Since $\tilde u_\theta$ vanishes identically in $\Omega_{-\beta,0}$, by the unique continuation property we must have $u_\theta = 0$ in $\Omega_{0\beta}$, hence $u$ is independent of~$\theta$, and the proof is complete.
\end{proof}

\pdfbookmark{Acknowledgements}{Acknowledgements}
\section*{Acknowledgements} The authors are members of the Gruppo
Nazionale per l'A\-na\-li\-si Matematica, la Pro\-ba\-bi\-li\-t\`a e
le loro Applicazioni
(\href{https://www.altamatematica.it/gnampa/}{GNAMPA}) of the Istituto
Na\-zio\-na\-le di Alta Ma\-te\-ma\-ti\-ca
(\href{https://www.altamatematica.it/en/}{INdAM}). F.~Gladiali is supported by Uniss, \href{https://www.fondazionedisardegna.it/}{Fondazione di Sardegna}, annualità 2017. A.~Greco is
partially supported by the research project {\em Evolutive and stationary Partial Differential Equations with a focus on bio-mathematics}, funded by
\href{https://www.fondazionedisardegna.it/}{Fondazione di Sardegna}, annualità 2019. This paper is dedicated to the city of Alghero (Sardinia, Italy) where the authors repeatedly met to carry their work on.

\bigskip

\noindent\begin{tabular}{ p{78mm}l }
Francesca Gladiali                  & Antonio Greco\\
Dipartimento di Chimica e Farmacia  & Dipartimento di Matematica e Informatica\\
Universit\`a degli Studi di Sassari & Universit\`a degli Studi di Cagliari\\
Italy                               & Italy\\
 e-mail: fgladiali@uniss.it         & e-mail: greco@unica.it\\
\end{tabular}


\begin{thebibliography}{99}
\pdfbookmark{References}{References}

\bibitem{AdolfssonJerison} V. Adolfsson and  D. Jerison.
\textit{$L^p$-integrability of the second order derivatives for the Neumann problem in convex domains. }Indiana Univ. Math. J. 43: 1123--1138, 1994.

\bibitem{BahriLions} A. Bahri and  P.L. Lions, \textit{Morse indices of some min-max critical points. I, Application to multiplicity results,} Comm. Pure
Appl. Math. 41: 1027--1037, 1988.

\bibitem{BartschWethWillem} T. Bartsch, T. Weth and M. Willem. \textit{Partial symmetry of least energy nodal solutions to some variational problems.}  J. Anal. Math. 96: 1--18, 2005.

\bibitem{Berestycki-Pacella}
H. Berestycki and F. Pacella.
\textit{Symmetry properties for positive solutions of elliptic equations with mixed boundary conditions.}
J. Funct. Anal. 87(1): 177--211, 1989.

\bibitem{Courant-Hilbert} R. Courant and D. Hilbert. Methods of Mathematical Physics, vol. I. Interscience Publishers, 1953.

\bibitem{Damascelli-Pacella}
L. Damascelli and F. Pacella.
Morse index of solutions of nonlinear elliptic equations.
De Gruyter, 2019.

\bibitem{Damascelli-Pacella-paper}
L. Damascelli and F. Pacella.
 \textit{Morse index and symmetry for elliptic problems with nonlinear mixed boundary conditions.}
Proc. Roy. Soc. Edinburgh Sect. A 149: 305--324, 2019.

\bibitem{elbert} A. Elbert. \textit{Some recent results on the zeros of Bessel functions and orthogonal polynomials.} Journal of Computational and Applied Mathematics 133: 65--83, 2001.

\bibitem{Gladiali1} F.~Gladiali, \textit{Separation of branches of $O(N-1)$-invariant solutions for a semilinear elliptic equation.}
J. Math. Anal. Appl. 453: 159--173, 2017.

\bibitem{Gladiali2} F.~Gladiali, \textit{A monotonicity result under symmetry and Morse index constraints in the plane.}
Proc. R. Soc. Edinb. Sect. A, Math. 151: 885--915, 2021.

\bibitem{gladiali-pacella-weth} F.~Gladiali, F. Pacella and T. Weth. \textit{Symmetry and nonexistence of low Morse index solutions  in
  unbounded domains.}
J. Math. Pures Appl. 93: 536--558, 2010.

\bibitem{Helffer} B. Helffer. Spectral theory and its applications. Cambridge University Press, 2013.

\bibitem{Jimbo}
S. Jimbo.
\textit{Eigenvalues of the Laplacian in a domain with a thin tubular hole.}
J. Elliptic Parabol. Equ. 1: 137--174, 2015.

\bibitem{Mazia} V. Maz’ya.
\textit{On the boundedness of first derivatives for solutions to the Neumann-Laplace problem in a convex
domain.} J. Math. Sci. (N.Y.) 159: 104--112, 2009.

\bibitem{Pacella}
F. Pacella.
\textit{Symmetry results for solutions of semilinear elliptic equations with convex nonlinearities.}
J. Funct. Anal. 192: 271--282, 2002.

\bibitem{PacellaTralli} F. Pacella and G. Tralli.
\textit{Overdetermined problems and constant mean curvature surfaces in cones}  Rev. Mat. Iberoam. 36: 841--867, 2020.

\bibitem{PacellaWeth} F. Pacella and T. Weth. \textit{Symmetry of solutions to semilinear elliptic equations via Morse index.} Proc. Amer. Math. Soc. 135: 1753--1762, 2007.

\bibitem{Simon}
B. Simon.
\textit{Schr\"odinger semigroups.}
Bull. Am. Math. Soc. New Ser. 7: 447--526, 1982.

\bibitem{SmetsSuWillem} D. Smets, J. Su and M. Willem. \textit{ Non-radial ground states for the H\'enon equation.}  Commun. Contemp. Math. 4: 467--480, 2002.

\end{thebibliography}
\end{document}